\newtheorem{thm}{Theorem}[section]
\newtheorem{prop}[thm]{Proposition}
\newtheorem{cor}[thm]{Corollary}
\newtheorem{lem}[thm]{Lemma}
\theoremstyle{definition}
\newtheorem{rmk}[thm]{Remark}
\numberwithin{equation}{section}
\newcommand{\bT}{\mathbb{T}}
\newcommand{\bbT}{\mathsf{T}}
\newcommand{\C}{\mathbb{C}}
\newcommand{\E}{\mathbb{E}}
\newcommand{\Irr}{{\rm Irr}}
\newcommand{\QG}{\mathbb{G}}
\newcommand{\N}{\mathbf{N}}
\newcommand{\Pol}{{\rm Pol}}
\newcommand{\bt}{\mathbb{T}}
\newcommand{\Pweight}{\Lambda}
\newcommand{\supp}{{\rm supp}}
\newcommand{\bc}{\mathbb{C}}
\newcommand{\bz}{\mathbb{Z}}
\newcommand{\br}{\mathbb{R}}
\newcommand{\bn}{\mathbb{N}}
\title{Tracial central states on compact quantum groups} 
\author{Amaury Freslon}
\address{Universit\'e Paris-Saclay, CNRS, Laboratoire de Math\'ematiques d'Orsay, 91405 Orsay, France}
\email{amaury.freslon@universite-paris-saclay.fr}
\author{Adam Skalski}
\address{Institute of Mathematics of the Polish Academy of Sciences, ul.~\'Sniadeckich 8, 00--656 Warszawa, Poland}
\email{a.skalski@impan.pl}
\author{Simeng Wang}
\address{Institute for Advanced Study in Mathematics, Harbin Institute of Technology, Harbin 150001, China}
\email{simeng.wang@hit.edu.cn}
\begin{document}

\begin{abstract}
Motivated by classical investigation of conjugation invariant positive-definite functions on discrete groups, we study \emph{tracial central states} on universal C*-algebras associated with compact quantum groups, where centrality is understood in the sense of invariance under the adjoint action. We fully classify such states on $q$-deformations of compact Lie groups, on free orthogonal quantum groups, quantum permutation groups and on quantum hyperoctahedral groups.
\end{abstract}

\subjclass[2010]{Primary 20G42; Secondary   46L55, 46L67}

\keywords{quantum groups; central tracial states; representation theory}

\maketitle

\section{Introduction}

Consider a discrete group $\Gamma$. It is well-known that normalised  positive-definite functions $\varphi$ on $\Gamma$ correspond via the GNS representation to (cyclic) unitary representations $\pi_\varphi$ of $\Gamma$ and also to states $\omega_\varphi$ on  $C^*(\Gamma)$, the universal $C^*$-algebra of $\Gamma$, (or, equivalently, on the group algebra $\bc[\Gamma]$). If the group in question is non-abelian, the latter algebra is non-commutative, and it is natural to ask about the states which are \emph{tracial}, i.e.\ satisfy the condition $\omega_\varphi(x y) = \omega_\varphi(yx)$ for all $x, y \in C^*(\Gamma)$. Again, it is not difficult to see that the last property corresponds to the function $\varphi$ being \emph{conjugation invariant}, i.e.\ constant on conjugacy classes. Thus the convex weak*-closed set of all tracial states on $C^*(\Gamma)$ is naturally isomorphic to the set of conjugation invariant positive-definite functions on $\Gamma$. The study of the latter set  and its extremal points (sometimes described as the set of \emph{characters} of $\Gamma$) forms an important and active theme of geometric and combinatorial group theory, theory of representations and operator algebras (see for example \cite{BdH}, or the introduction to \cite{OSV}; related problems are also surveyed in \cite{CMP}). In particular recent years brought significant breakthroughs related to `character rigidity' of certain groups, which is understood as  admitting only very special extremal conjugation invariant functions (see \cite{PT} or \cite{BBHP}). 

In this article we initiate the study of an analogous question in the realm of discrete quantum groups, phrasing it in terms of their compact duals. Suppose that $\QG$ is a compact quantum group in the sense of \cite{Wor2}. We will be interested in tracial states on the associated Hopf $*$-algebra $\Pol(\QG)$; in the case of $\QG$ being the dual of a classical discrete group $\Gamma$ ($\QG= \widehat{\Gamma}$), these are precisely the objects introduced in the last paragraph. The general quantum setup offers however yet another feature: we can ask about \emph{central} tracial states of $\Pol(\QG)$, i.e.\ those which lie in the center of $\Pol(\QG)'$, where the latter space is equipped with the convolution product. In case where $\QG= \widehat{\Gamma}$ the centrality condition trivialises -- as the convolution product, corresponding to multiplication of positive-definite functions, is commutative -- but in general it provides a strong constraint on the class of traces we will analyse. Thus, the question studied in the paper is the following one:
\begin{itemize}
\item Given a compact quantum group $\QG$, can we describe explicitly (extremal) central tracial states on $\Pol(\QG)$?
\end{itemize}
Note that as the set of tracial central states on $\Pol(\QG)$ (equivalently, on $C^u(\QG)$, the universal $C^*$-completion of $\Pol(\QG)$) is a compact convex set inside a locally convex space, by the Krein-Milman theorem to understand its structure it is indeed sufficient to understand the extremal points. Whilst it might at first glance appear natural to study arbitrary tracial states, we will see that the class of central tracial states is much easier to classify, and at the same time already very important. Let us recall for example that central states on $\Pol(\QG)$ often encode key approximation properties of $\QG$ (\cite{Bra}, \cite{DCFY}) and play a fundamental role in the study of quantum L\'evy processes and their relationship to noncommutative geometry (\cite{CFK}).

The main results of this work are complete classifications of extremal tracial central states for a number of compact quantum groups, namely
\begin{itemize}
\item for $\QG_q$, i.e.\ $q$-deformations of a classical compact semisimple simply connected Lie group $G$ as constructed in \cite{KS98} -- extremal tracial central states are given by the points in the center of $G$;
\item for the free orthogonal group $O_N^+$ -- extremal tracial central states are the counit, the `alternating' character and the Haar state;
\item for the free permutation group $S_N^+$ -- extremal tracial central states are the counit and the Haar state;	
\item for the free hyperoctahedral group $H_N^+$ -- extremal tracial central states are the counit, the `alternating' character and the Haar state.
\end{itemize} 
As it turns out, in each of these cases  the set of central tracial states (and more generally, central tracial functionals on $\Pol(\QG)$) turns out to be rather small, containing only `obvious' elements and in particular finitely many extremal points. Moreover, it is a corollary of the methods that we use that all central tracial functionals extend continuously to the universal $C^*$-completion $C^u(\QG)$. The proofs of these facts turn out however to be highly non-trivial, and in each case require using different tools, from combinatorics of classical root systems of Lie algebras (\cite{Humphreys}), via Weingarten formula and calculus of partitions (\cite{Ban1}, \cite{BB1}, \cite{Fre}), to techniques from the theory of quantum convolution semigroups (\cite{Sch}, \cite{LiS}). We would like to note that although at first glance our conclusions appear similar to these appearing in the study of character rigidity (i.e.\ we see in our examples only very special central tracial states), the actual reasons seem conceptually different. In contrast to say \cite{PT} in the statements above we do not deal with quantum groups enjoying the geometric rigidity properties such as Kazhdan Property (T), but rather exploit the centrality property -- invisible for classical groups -- in conjunction with strong noncommutativity of the cases we study.

The plan of the paper is as follows: after this introduction in Section \ref{sec:prelim} we recall certain preliminaries, introduce central tracial states and their basic properties. In Section \ref{sec:deform} we first observe that tracial states always live on the Kac part of a given compact quantum group, and use this fact together with the combinatorial  arguments related to root systems to give a full description of central tracial states on $q$-deformations. Sections \ref{sec:orthogonal}, \ref{sec:permutation} and \ref{sec:hyperoctahedral} are devoted to characterising central tracial states on respectively $O_N^+$, $S_N^+$ and $H_N^+$; in each case the arguments use the Weingarten formula, but the second and the third  are much more involved. Finally in the Appendix we collect certain combinatorial computations concerning the Haar state of $S_N^+$, needed in Section \ref{sec:permutation}.

We will write $\bn_0$ for $\bn \cup \{0\}$.



\section{Preliminaries} \label{sec:prelim}

We will be working in the following setup: let $\QG$ be a compact quantum group in the sense of Woronowicz \cite{Wor2}, and let $\Pol(\QG)$ be the canonical Hopf $*$-algebra associated to $\QG$. We refer the reader for instance to \cite{NT} for the definitions of compact quantum groups and associated objects, as well as for proofs of the results from the general theory which will be used. The symbol $\Irr(\QG)$ will denote the set of (equivalence classes of) irreducible representations of $\QG$, and for $\alpha \in \Irr(\QG)$ we will denote by $U^{\alpha}$ a fixed representative, by $\chi_{\alpha}$ the associated character (the sum of diagonal elements of the $U^{\alpha}$, which does not depend on the choice of a representative) and by $d_{\alpha}$ the dimension of $\alpha$. We will always denote the trivial element of $\Irr(\QG)$ by $0$, so that $\chi_{0} = \mathbf{1}_{\Pol(\QG)}$ and $d_{0} = 1$. The unital $*$-algebra spanned by the characters of irreducible representations inside $\Pol(\QG)$ will be called the \emph{central subalgebra} or the \emph{subalgebra of class functions} and denoted $\Pol_{c}(\QG)$.

It is well-known that the Hopf $*$-algebras $\Pol(\QG)$ admit a characterisation as the so-called CQG-algebras \cite{DK}, that they admit universal $C^{*}$-completions (which we will denote by $C^{u}(\QG)$) and that there is a 1-1 correspondence between states (respectively, tracial states) on $\Pol(\QG)$ and  states (respectively, tracial states) on $C^{u}(\QG)$. Slightly abusing the language we will also speak simply  about states or tracial states on $\QG$.

The space of complex-valued functionals on $\Pol(\QG)$ is an algebra with respect to the natural convolution product: given $\phi,\psi: \Pol(\QG) \to \C$ we set $\phi \star \psi: = (\phi \otimes \psi)\Delta$. A functional $\phi$ on $\Pol(\QG)$ is called \emph{tracial} if for all $a,b \in \Pol(\QG)$ we have $\phi(ab) = \phi(ba)$.  We will be especially interested in \emph{central} functionals, i.e.\ those  $\phi : \Pol(\QG)\to \C$ for which there exists a family of complex numbers $(c_\alpha)_{\alpha\in \Irr(\QG)}$ such that for all $\alpha\in \Irr(\QG)$,
\begin{equation*}
\phi(U_{ij}^\alpha) = c_{\alpha}\delta_{ij}, \;\;\;i,j =1,\ldots, d_\alpha.
\end{equation*}
Note that each central functional is determined by the values it takes on the characters, with
\begin{equation*}
\phi(\chi_{\alpha}) = c_{\alpha}d_{\alpha} := \phi_{\alpha}.
\end{equation*}
It will therefore often be more convenient to describe a central functional via the sequence $(\phi_{\alpha})_{\alpha\in \Irr(\QG)}$. Being central has a natural interpretation in terms of the convolution product: it is easy to check that a functional $\phi: \Pol(\QG)\to \C$ is central in the above sense if and only if for any functional $\psi: \Pol(\QG)\to \C$ we have $\phi \star \psi = \psi \star \phi$. Note that a priori there is no guarantee that a central functional on $\Pol(\QG)$ admits a continuous extension to $C^{u}(\QG)$, see Remark \ref{rem:unbd}  (but for states this is the case, as explained in the last paragraph).

Every compact quantum group admits a \emph{counit}, which can be described as the central functional 
given by the formula
\begin{equation*}
\varepsilon(\chi_{\alpha}) = d_{\alpha}
\end{equation*}
for all $\alpha\in \Irr(\QG)$. The counit is in fact positive and multiplicative, so in particular is a central tracial state. Another distinguished case is that of the \emph{Haar state}, given by the formula $h(\chi_{0}) = 1$ and $h(\chi_{\alpha}) = 0$ for all $\alpha\in \Irr(\QG)\setminus\{0\}$. It is a central state, tracial if and only if $\QG$ is \emph{of Kac type}.

If $\QG$ is of Kac type, then it is well-known that we have a positive faithful, $h$-preserving conditional expectation
\begin{equation*}
\E:\Pol(\QG) \to \Pol_{c}(\QG).
\end{equation*}
For a given $\alpha\in \Irr(\QG)$, it follows from the Woronowicz-Peter-Weyl orthogonality relations that
\begin{equation*}
\mathbb{E}(x) = \sum_{\alpha\in\Irr(\QG)}h(\chi_{\alpha}^{*}x)\chi_{\alpha}, \;\;\; x\in \Pol(\QG).
\end{equation*}
Note that the sum above is in fact finite for any fixed $x\in \Pol(\QG)$. Note that the existence of $E$ with the properties above can be easily established using the general properties of the tracial von Neumann algebras, working at the level of the von Neumann algebra completion of $\Pol(\QG)$ with respect to the GNS representation of $h$.

This work focuses on states which are both central and tracial, and we will call them \emph{tracial central states} and sometimes abbreviate the term to TCS. It is obvious that the set of all TCS on a compact quantum group $\QG$, denoted by $\mathrm{TCS}(\QG)$, forms a weak$^{*}$-closed (in the weak$^{*}$-topology of $C^{u}(\QG)$) convex set. Thus, it is natural to look for extremal points of $\mathrm{TCS}(\QG)$.

\begin{rmk}
If $\QG = G$ is a classical compact group, the traciality condition is trivially satisfied, so that TCS are just central probability measures on $G$, i.e.\ those measures which are invariant under the adjoint action (so for example if $G$ is also abelian, simply all probability measures on $G$).

On the other hand if $\QG = \widehat{\Gamma}$ is the dual of a classical discrete group $\Gamma$, then tracial central states of $\QG$ are naturally identified with positive definite functions on $\Gamma$ which are also class functions (in other words, are conjugacy invariant). This time the centrality condition trivialises. Note that in this case the study of extremal points of the corresponding tracial states is an important topic in the representation theory. See for example \cite[Section 14]{BdH}, where the extremal points of the set $\mathrm{TCS}(\widehat{\Gamma})$ are introduced, called the \emph{Thoma dual} of $\Gamma$, denoted by $E(\Gamma)$ and studied in several examples. The structure of $E(\mathbb{F}_{2})$ was recently studied for example in \cite{OSV}; see also the survey \cite{CMP}.
\end{rmk}

We will later need to consider certain operations on tracial central functionals, in particular inspired by the theory of convolution semigroups of states (see \cite{Sch}). We gather here for convenience some elementary results concerning these.

\begin{lem}\label{lem:conv}
Suppose that $\phi, \psi: \Pol(\QG) \to \bc$ are functionals. Then 
\begin{itemize}
\item[(i)] If $\phi$ and $\psi$ are central, then so is  their convolution product $\phi \star \psi$; 
\item[(ii)] if $\phi$ and $\psi$ are tracial, then so is  their convolution product $\phi \star \psi$; 
\item[(iii)] the functionals $\exp_\star (t \phi):= \sum_{n=0}^\infty  \frac{t^n\phi^{\star n}}{n!}$ (where $\phi^{\star 0}:=\varepsilon$ and the series is convergent pointwise by the fundamental theorem on coalgebra) are tracial and central for each $t >0$ if and only if so is $\phi$;
\item[(iv)] if $\phi$ is a central state, $\alpha \in \Irr(\QG)$, then for any $t >0$ we have 
\[ \exp_\star(t(\phi - \varepsilon))(\chi_\alpha) = d_\alpha \exp(t \lambda_\alpha),\]
where $\lambda_\alpha= \frac{\phi(\chi_\alpha)}{d_\alpha}-1$. 
\end{itemize}
\end{lem}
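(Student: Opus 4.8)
The plan is to handle the four items essentially in order, with (i) and (ii) being direct computations that then feed into (iii) and (iv). For (i), I would use the characterisation of centrality in terms of convolution noted just before the lemma: $\phi$ is central iff $\phi \star \psi' = \psi' \star \phi$ for every functional $\psi'$. So given $\phi$, $\psi$ central and an arbitrary $\rho$, I would compute $(\phi \star \psi) \star \rho = \phi \star (\psi \star \rho) = (\psi \star \rho) \star \phi = \psi \star (\rho \star \phi) = \psi \star (\phi \star \rho) = (\psi \star \phi) \star \rho = (\phi \star \psi) \star \rho$, wait — more cleanly, associativity of $\star$ plus the two commutation relations gives $(\phi\star\psi)\star\rho = \rho\star(\phi\star\psi)$, which is exactly centrality of $\phi \star \psi$. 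Alternatively one can just evaluate on matrix coefficients: $(\phi\star\psi)(U^\alpha_{ij}) = \sum_k \phi(U^\alpha_{ik})\psi(U^\alpha_{kj}) = \sum_k c^\phi_\alpha \delta_{ik} c^\psi_\alpha \delta_{kj} = c^\phi_\alpha c^\psi_\alpha \delta_{ij}$, which is manifestly of the required form. I would probably present this second, coordinate computation since it is shortest and also records that the $c_\alpha$ of $\phi\star\psi$ is the product $c^\phi_\alpha c^\psi_\alpha$, which is useful for (iv).

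For (ii), traciality of $\phi\star\psi$ follows from the fact that $\Delta$ is an algebra homomorphism together with the commutativity of the tensor product ordering: for $a,b \in \Pol(\QG)$, writing $\Delta(a) = \sum a_{(1)}\otimes a_{(2)}$ and similarly for $b$, we have $(\phi\star\psi)(ab) = \sum \phi(a_{(1)}b_{(1)})\psi(a_{(2)}b_{(2)})$, and applying traciality of $\phi$ in the first leg and of $\psi$ in the second leg turns this into $\sum \phi(b_{(1)}a_{(1)})\psi(b_{(2)}a_{(2)}) = (\phi\star\psi)(ba)$. For (iii), the ``if'' direction is the substantive one: assuming $\phi$ is tracial and central, each $\phi^{\star n}$ is tracial and central by (i) and (ii) (and $\phi^{\star 0} = \varepsilon$ is both), hence every partial sum of $\exp_\star(t\phi)$ is tracial and central, and since these are finitely many linear conditions that pass to pointwise limits on the (finite-dimensional, for each fixed pair of arguments) coalgebra, the limit $\exp_\star(t\phi)$ is tracial and central. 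For the ``only if'' direction I would differentiate at $t=0$: $\frac{d}{dt}\big|_{t=0}\exp_\star(t\phi) = \phi - \varepsilon$ pointwise — hmm, more precisely $\exp_\star(t\phi) = \varepsilon + t\phi + O(t^2)$ pointwise on each element, so if all $\exp_\star(t\phi)$ are tracial and central then so is $\tfrac{1}{t}(\exp_\star(t\phi) - \varepsilon) = \phi + O(t)$, and letting $t\to 0$ gives that $\phi$ is tracial and central; I should be slightly careful to note the error term is controlled on a fixed finite-dimensional space so the limit is legitimate.

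For (iv), since $\phi$ is a central state, by (i) so is $\phi - \varepsilon$... well, it is a central \emph{functional}; the point is that $\psi := \phi - \varepsilon$ has $\psi(\chi_\alpha) = \phi(\chi_\alpha) - d_\alpha = d_\alpha\lambda_\alpha$, i.e. its diagonal constant is $c^\psi_\alpha = \lambda_\alpha$. By the multiplicativity of the constants under $\star$ recorded in (i), $\psi^{\star n}$ has diagonal constant $\lambda_\alpha^n$, so $\psi^{\star n}(\chi_\alpha) = d_\alpha \lambda_\alpha^n$. Summing the series, $\exp_\star(t\psi)(\chi_\alpha) = \sum_{n\geq 0} \frac{t^n}{n!} d_\alpha \lambda_\alpha^n = d_\alpha e^{t\lambda_\alpha}$, which is the claimed formula. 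The main obstacle, such as it is, is purely a matter of bookkeeping: making sure the pointwise convergence of the $\exp_\star$ series is invoked correctly (it is guaranteed by the fundamental theorem on coalgebras, as the excerpt already notes) so that all the ``pass to the limit'' steps in (iii) and (iv) are justified on the relevant finite-dimensional subspaces; none of the individual algebraic identities is deep.
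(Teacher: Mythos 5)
Your proposal is correct and follows essentially the same route as the paper: (i) and (ii) are the same direct checks, (iii) is handled exactly as in the paper via the pointwise limit $\phi(a)=\lim_{t\to 0^+}t^{-1}\bigl(\exp_\star(t\phi)(a)-\varepsilon(a)\bigr)$ together with stability of traciality and centrality under pointwise limits, and (iv) is the same computation, merely packaged as multiplicativity of the diagonal constants under $\star$ instead of the paper's explicit expansion of $\Delta^{(n-1)}(U^\alpha_{ii})$, both yielding $\widetilde{\phi}^{\star n}(\chi_\alpha)=d_\alpha\lambda_\alpha^n$.
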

\begin{proof}
Statements (i) and (ii) are an easy check. So is (iii), once we note that for every $a in \Pol(\QG)$ we have
\begin{equation*}
\phi(a)=\lim_{t \to 0^+} \frac{(\exp_\star (t \phi)) (a)- \varepsilon(a)}{t},
\end{equation*}
and both the properties we consider are preserved by pointwise limits. 	
 
Eventually, (iv) follows from a straightforward computation: fix $t>0$ and $\phi$ as above, and set $\widetilde{\phi} = \phi - \varepsilon$ (so that $\widetilde{\phi}$ is also central). Then for any $n\in \bn$ we have
\begin{align*} 
\widetilde{\phi}^{\star n}(\chi_{\alpha}) & = \sum_{i=1}^{d_{\alpha}}(\widetilde{\phi}\otimes\cdots\otimes \widetilde{\phi})(\Delta^{(n-1)}(U_{ii}^{\alpha})) = \sum_{i=1}^{d_{\alpha}}\sum_{i_{1}=1}^{d_{\alpha}} \cdots\sum_{i_{n-1}=1}^{d_{\alpha}}\widetilde{\phi}(U_{i, i_{1}}^{\alpha})\widetilde{\phi}(U_{i_{1}, i_{2}}^{\alpha})\cdots\widetilde{\phi} (U_{i_{n-1}, i}^{\alpha}) \\
& = \sum_{i=1}^{d_{\alpha}} (\widetilde{\phi} (U_{ii}^{\alpha}))^{n}  
 = d_{\alpha}\left(\frac{\phi(\chi_{\alpha})}{d_{\alpha}} - 1\right)^{n} = d_{\alpha}\lambda_{\alpha}^{n}.
\end{align*} 
Note that this formula works also for $n=0$. Thus indeed
\begin{equation*}
\exp_{\star}\left(t(\phi - \varepsilon)\right)(\chi_{\alpha}) = \sum_{n=0}^{\infty} \frac{t^{n} \widetilde{\phi}^{\star n}(\chi_{\alpha})}{n!} = \sum_{n=0}^{\infty} \frac{d_{\alpha} t^{n} \lambda_{\alpha}^{n}}{n!} = d_{\alpha}\exp(t \lambda_{\alpha}).
\end{equation*}
\end{proof}

\section{Deformations and the maximal Kac quantum subgroup} \label{sec:deform}

We start by investigating the case of $q$-deformations of semisimple simply connected compact Lie groups, denoted below $\QG_{q}$. These are well-known to be not of Kac type, so that the conditional expectation $\E$ onto the central subalgebra does not exist. However, we will first note that all tracial states factor through the largest quantum subgroup which is of Kac type, the so-called \emph{Kac part} introduced in \cite{Sol} (see also \cite[Section 3]{FFS}). This will then enable us to understand completely the structure of $\mathrm{TCS}(\QG_{q})$.

\begin{prop}\label{prop:Kactype}
Let $\QG$ be a compact quantum group with maximal Kac quantum subgroup $\QG_{\text{Kac}}$ (so that we have a surjective Hopf $*$-homomorphism $q_{\text{Kac}}:\Pol(\QG) \to \Pol(\QG_{\text{Kac}})$) and let $\phi : \Pol(\QG) \to \C$. Then $\phi$ is a \emph{tracial} state if and only if it is of the form  $\phi = \tau\circ q_{\text{Kac}}$ for some \emph{tracial} state $\tau$ on $\QG_{\text{Kac}}$.

In particular, if $\QG_{\text{Kac}}$ is a classical group $T$, then the tracial states on $\QG$ can be identified with probability measures on $T$.
\end{prop}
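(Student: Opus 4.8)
I would first dispose of the easy implication ``$\Leftarrow$'': if $\tau$ is a tracial state on $\QG_{\text{Kac}}$ then $\phi := \tau\circ q_{\text{Kac}}$ is a state, being the composition of $\tau$ with a unital $*$-homomorphism, and it is tracial since $q_{\text{Kac}}$ is surjective and multiplicative. Granting the equivalence, the final sentence follows because for $\QG_{\text{Kac}} = T$ classical the algebra $C^u(\QG_{\text{Kac}}) = C(T)$ is commutative, so all of its states are automatically tracial and are exactly the probability measures on $T$; combined with the state/completion correspondence recalled above this identifies the tracial states of $\QG$ with $\mathrm{Prob}(T)$. The substance of the proposition is thus the implication ``$\Rightarrow$'': every tracial state $\phi$ on $\Pol(\QG)$ annihilates $\ker q_{\text{Kac}}$.

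For this I would use the concrete description of the Kac part from \cite{Sol} (see also \cite[Section 3]{FFS}): $\Pol(\QG_{\text{Kac}})$ is the quotient of $\Pol(\QG)$ by the Hopf $*$-ideal $J$ generated by $\{a - S^2(a) : a\in\Pol(\QG)\}$. Choosing, for each $\alpha\in\Irr(\QG)$, a representative $U^\alpha$ for which the positive matrix $F_\alpha$ implementing $S^2$ on its coefficients is diagonal, $F_\alpha = \mathrm{diag}(f^\alpha_1,\dots,f^\alpha_{d_\alpha})$ with $f^\alpha_k > 0$ and $\sum_k f^\alpha_k = \sum_k (f^\alpha_k)^{-1}$, one has $S^2(U^\alpha_{ij}) = (f^\alpha_i/f^\alpha_j)\,U^\alpha_{ij}$, so $J$ is the two-sided ideal generated by $\{U^\alpha_{ij} : f^\alpha_i\neq f^\alpha_j\}$. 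Since $\phi$ is a trace, $\phi(b\,U^\alpha_{ij}\,c) = \phi(U^\alpha_{ij}\,cb)$, so it is enough to prove $\phi(U^\alpha_{ij}d) = 0$ for every $d\in\Pol(\QG)$ whenever $f^\alpha_i\neq f^\alpha_j$; and since $|\phi(U^\alpha_{ij}d)|^2 \le \phi(d^*d)\,\phi\big(U^\alpha_{ij}(U^\alpha_{ij})^*\big)$ by the Cauchy--Schwarz inequality for states, this reduces to the single scalar identity $\phi\big(U^\alpha_{ij}(U^\alpha_{ij})^*\big) = 0$ when $f^\alpha_i\neq f^\alpha_j$.

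To prove that identity I would set $a^\alpha_{ij} := \phi\big(U^\alpha_{ij}(U^\alpha_{ij})^*\big)$, which by traciality equals $\phi\big((U^\alpha_{ij})^*U^\alpha_{ij}\big)$ as well and hence is $\ge 0$. Applying $\phi$ to the unitarity relations $\sum_k U^\alpha_{ik}(U^\alpha_{ik})^* = 1$ and $\sum_k (U^\alpha_{ki})^*U^\alpha_{ki} = 1$ (the latter together with traciality) shows that the matrix $A^\alpha = (a^\alpha_{ij})_{i,j}$ is doubly stochastic. Applying the antipode $S$ --- an anti-homomorphism with $S(U^\alpha_{ik}) = (U^\alpha_{ki})^*$ --- to $\sum_k U^\alpha_{ik}(U^\alpha_{jk})^* = \delta_{ij}$ and using the formula for $S^2$ gives the standard identity $\sum_k f^\alpha_k\,U^\alpha_{kj}(U^\alpha_{kj})^* = f^\alpha_j$, to which applying $\phi$ yields $\sum_k f^\alpha_k\,a^\alpha_{kj} = f^\alpha_j$ for every $j$, that is, $(f^\alpha_k)_k$ is a left $1$-eigenvector of $A^\alpha$. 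A short combinatorial step then finishes: ordering the indices so that $f^\alpha_1\ge\dots\ge f^\alpha_{d_\alpha}$ and letting $S$ be the set of indices realising the maximum $\mu$, for $j\in S$ the equality $\mu = \sum_k f^\alpha_k a^\alpha_{kj}\le\mu\sum_k a^\alpha_{kj} = \mu$ forces $a^\alpha_{kj} = 0$ for $f^\alpha_k < \mu$, so each column indexed by $S$ is supported on $S$; comparing $\sum_{k,j\in S}a^\alpha_{kj} = |S|$ (summing columns first) with $\sum_{k\in S}\sum_j a^\alpha_{kj} = |S|$ forces each row indexed by $S$ to be supported on $S$ too; hence $A^\alpha$ decomposes as a doubly stochastic block over $S$ and one over its complement, on which the same hypotheses hold, and induction on $d_\alpha$ gives $a^\alpha_{ij} = 0$ whenever $f^\alpha_i\neq f^\alpha_j$. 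Consequently $\phi$ vanishes on $J = \ker q_{\text{Kac}}$, so it factors as $\phi = \tau\circ q_{\text{Kac}}$ with $\tau$ a state on $\Pol(\QG_{\text{Kac}})$ that is tracial because $q_{\text{Kac}}$ is onto.

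The part I expect to be the real crux is the reduction carried out in the second paragraph: recognising that the Kac part is cut out precisely by the matrix coefficients on which $S^2$ acts non-trivially, and that, after pairing with a tracial $\phi$, the non-tracial ``$F$-twist'' carried by $\Pol(\QG)$ degenerates into the rigid constraint ``$A^\alpha$ doubly stochastic with prescribed, trace-balanced positive left $1$-eigenvector'', which leaves no room beyond the block supported on constant $F$-weights. Everything else is routine: the explicit form of $\ker q_{\text{Kac}}$ in terms of the $F_\alpha$ (from \cite{Sol,FFS} together with the standard antipode calculus of \cite{NT}), the Cauchy--Schwarz inequality, and the elementary doubly stochastic matrix lemma. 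One should of course also check that the above choice of representatives entails no loss of generality and that the quoted antipode identity is the one that genuinely survives the application of $\phi$.
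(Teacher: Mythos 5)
Your proof is correct, but it follows a genuinely different route from the paper's. The paper simply invokes So\l tan's construction of the canonical Kac quotient, adapted to CQG-algebras: it gives $\Pol(\QG_{\text{Kac}}) = \Pol(\QG)/J$ with $J = \bigcap_{\tau\in\mathrm{Tr}(\QG)}\{b:\tau(b^*b)=0\}$, after which the factorisation of any tracial state through $q_{\text{Kac}}$ is immediate, so the whole proof is a citation plus Cauchy--Schwarz. You instead start from the universal-property description of the Kac part --- $\ker q_{\text{Kac}}$ is the Hopf $*$-ideal generated by $\{a-S^2(a)\}$, equivalently by the coefficients $U^\alpha_{ij}$ with $f^\alpha_i\neq f^\alpha_j$ --- and prove directly that every trace annihilates these generators, via the matrix $A^\alpha=\bigl(\phi(U^\alpha_{ij}(U^\alpha_{ij})^*)\bigr)$, which your unitarity/antipode computations correctly show to be doubly stochastic with the strictly positive vector $(f^\alpha_k)_k$ as a left $1$-eigenvector, and your block-decomposition argument for such matrices is sound. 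In effect you reprove, in the compact case, the identification of So\l tan's trace-defined Kac quotient with the maximal Kac quantum subgroup rather than quoting it; this makes your proof self-contained (arguably supplying the detail the paper delegates to ``So\l tan's construction can be run in the CQG framework'') at the cost of length, while the paper's argument is shorter but rests entirely on that quoted identification. Two points you should make explicit: the two-sided $*$-ideal generated by $\{a-S^2(a)\}$ is automatically a coideal, $*$-stable and antipode-stable (using $\Delta S^2=(S^2\otimes S^2)\Delta$, $S S^2 = S^2 S$ and $*\circ S^2\circ * = S^{-2}$), hence coincides with the Hopf $*$-ideal $\ker q_{\text{Kac}}$ --- without this, vanishing of $\phi$ on the two-sided ideal would not a priori suffice; and surjectivity of $q_{\text{Kac}}$ is not actually needed for traciality of $\tau\circ q_{\text{Kac}}$, only for traciality of the induced $\tau$ in the forward direction.
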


\begin{proof}
This is a consequence of the construction of the maximal Kac quotient in \cite[Appendix A]{Sol}. Note that So\l tan's construction is presented in the $C^{*}$-algebraic context, but it can be run in the framework of CQG-algebras, showing that if we define $\QG_{\text{Kac}}$ as the maximal quantum subgroup of $\QG$ of Kac type, as say considered in \cite[Section 3]{FFS}, then $\Pol(\QG_{\text{Kac}}) = \Pol(\QG)/J$, where
\begin{equation*}
J = \bigcap_{\tau\in\mathrm{Tr}(\QG)}\{b \in \Pol(\QG): \tau(b^*b) = 0\},
\end{equation*}
where $\mathrm{Tr}(\QG)$ denotes the set of all tracial states on $Pol(\QG)$.
This essentially ends the proof, showing that every tracial state on $\Pol(\QG)$ has to factor through $J$.
\end{proof}

\begin{rmk}
Note that this is not clear whether all tracial (even central) functionals on $\Pol(\QG)$ need to factor through $\Pol(\QG_{\text{Kac}})$.
\end{rmk}

The above result shows that for each  $q$-deformation of a classical semisimple simply connected compact Lie group $G$, tracial states on $\QG_{q}$ correspond to the probability measures on the maximal torus of $G$. We will now describe precisely these which are in addition central. We begin with $SU_{q}(2)$, where we can provide a direct argument using the explicit formul\ae{} for representations. Let $ q\in [-1,1] \setminus\{0\}$. Recall from \cite{Wor1} that $\Pol(SU_{q}(2))$ is generated by two elements $\alpha$ and $\gamma$ such that the matrix
\begin{equation*}
u:=\left(\begin{array}{cc}
\alpha & -q\gamma^{*} \\
\gamma & \alpha^{*}
\end{array}\right)
\end{equation*}
is unitary. The coproduct $\Delta : \Pol(SU_{q}(2))\to \Pol(SU_{q}(2))\otimes \Pol(SU_{q}(2))$ is given on the generators by
\begin{equation*}
\Delta(\alpha) = \alpha\otimes \alpha - q\gamma^{*}\otimes \gamma \quad \& \quad \Delta(\gamma) = \gamma\otimes \alpha + \alpha^{*}\otimes\gamma.
\end{equation*}
We will need the following description of the irreducible representations of $SU_{q}(2)$ established in \cite{Wor1}: they can be indexed by non-negative half-integers in such a way that $u^{0}$ is the trivial representation, $u^{1/2} = u$ and for all $l\in \frac{1}{2}\bn$,
\begin{equation*}
	u^{\frac{1}{2}}\otimes u^{l} = u^{l-\frac{1}{2}}\oplus u^{l+\frac{1}{2}}.
\end{equation*}

We are now ready to classify the TCS on $SU_{q}(2)$ for $q \notin \{-1,1\}$.

\begin{prop}
Let $q \in (-1,1)\setminus\{0\}$. Every tracial central state on $SU_{q}(2)$ is a convex combination of the counit and of the character $\varepsilon_{\mathrm{alt}}: \Pol(SU_{q}(2)) \to \C$, determined by the formula $\varepsilon_{\mathrm{alt}}(\alpha) = -1$, $\varepsilon_{\mathrm{alt}}(\gamma) = 0$. 
\end{prop}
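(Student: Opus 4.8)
The plan is to combine Proposition~\ref{prop:Kactype} with Woronowicz's explicit description of the corepresentation theory of $SU_q(2)$ from \cite{Wor1}.

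\emph{Reduction to the torus.} Since $q\notin\{-1,1\}$, the maximal Kac quantum subgroup of $SU_q(2)$ is the classical maximal torus $\mathbb{T}$ of $SU(2)$, and by the discussion preceding the proposition (the case $G=SU(2)$; see also \cite{Sol} and \cite[Section~3]{FFS}) every tracial state $\phi$ on $SU_q(2)$ is of the form $\phi=\tau_\mu\circ q_{\mathrm{Kac}}$ for a Borel probability measure $\mu$ on $\mathbb{T}$, where $\tau_\mu$ denotes integration against $\mu$ and $q_{\mathrm{Kac}}\colon\Pol(SU_q(2))\to\Pol(\mathbb{T})=\mathbb{C}[z,z^{-1}]$ is the canonical Hopf $*$-quotient, determined by $q_{\mathrm{Kac}}(\alpha)=z$ and $q_{\mathrm{Kac}}(\gamma)=0$; conversely every such $\mu$ produces a tracial state, since $\mathbb{T}$ is abelian. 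I will write $m_n=\int_{\mathbb{T}}z^n\,d\mu$, so that $m_0=1$.

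\emph{Using centrality.} I would next recall from \cite{Wor1} that, in Woronowicz's weight basis, the restriction $u^l|_{\mathbb{T}}$ is the diagonal representation with diagonal entries $z^{2l},z^{2l-2},\dots,z^{-2l}$; equivalently $q_{\mathrm{Kac}}(U^l_{ij})=\delta_{ij}z^{w_i}$ where $\{w_1,\dots,w_{2l+1}\}=\{-2l,-2l+2,\dots,2l\}$. Hence $\phi(U^l_{ij})=\delta_{ij}\,m_{w_i}$, and if $\phi$ is central then the numbers $m_{-2l},m_{-2l+2},\dots,m_{2l}$ must all coincide. Taking $l=1$ already forces $m_2=m_0=1$.

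\emph{Conclusion.} Since $\int_{\mathbb{T}}(1-z^2)\,d\mu=1-m_2=0$ and $\operatorname{Re}(1-z^2)\ge 0$ on $\mathbb{T}$, the measure $\mu$ is supported on $\{z\in\mathbb{T}:z^2=1\}=\{1,-1\}$, so $\mu=t\,\delta_1+(1-t)\,\delta_{-1}$ for some $t\in[0,1]$. Composing with $q_{\mathrm{Kac}}$, evaluation at $z=1$ is the counit $\varepsilon$ and evaluation at $z=-1$ is $\varepsilon_{\mathrm{alt}}$, whence $\phi=t\,\varepsilon+(1-t)\,\varepsilon_{\mathrm{alt}}$. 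Conversely, $\varepsilon$ is a tracial central state, and $\varepsilon_{\mathrm{alt}}$ is a $*$-character, hence a tracial state, and it is central because $-1$ lies in the centre of $SU(2)$, so $u^l$ evaluated there equals $(-1)^{2l}\mathrm{Id}$; thus $\mathrm{TCS}(SU_q(2))$ is exactly the segment with endpoints $\varepsilon$ and $\varepsilon_{\mathrm{alt}}$. I expect the only genuinely non-formal ingredient to be the reduction step, namely the identification of the maximal Kac quantum subgroup of $SU_q(2)$ with the torus $\mathbb{T}$; this is also exactly where the hypothesis $q\notin\{-1,1\}$ is needed, since for $q=\pm1$ the quantum group $SU_q(2)$ is already of Kac type and the statement is false (the Haar state is then a tracial central state not of the stated form). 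Once this reduction and the weight description of $u^l|_{\mathbb{T}}$ are in place, the remainder is an elementary computation with the moments $m_n$.
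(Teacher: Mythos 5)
Your proof is correct and follows essentially the same route as the paper: reduction to a probability measure on the torus via the maximal Kac quantum subgroup (citing the identification of the Kac part of $SU_q(2)$ with $\mathbb{T}$), the diagonal weight description of $u^l|_{\mathbb{T}}$, and the resulting centrality constraint on the moments of $\mu$. The only (harmless) difference is the finishing step: the paper records that all even moments equal $1$ and all odd moments equal a common constant $c\in[-1,1]$, whereas you use only $m_2=1$ together with a positivity/support argument to force $\mathrm{supp}(\mu)\subseteq\{1,-1\}$, which is a slightly more economical way to conclude.
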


\begin{proof}
Proposition \ref{prop:Kactype} implies that any central tracial state $\phi : \Pol(SU_{q}(2)) \to \bc$ is of the form $\phi = \tau_\mu \circ q_{\bT}$, where $\mu \in \textup{Prob}(\bT)$ and $q_{\bT} : \Pol(SU_{q}(2))\to \Pol(\bT))$ is the homomorphism given by $q_{\bT}(\alpha) = z, q_{\bT}(\gamma) = 0$ (the fact that $\bT$ is the Kac part of $SU_{q}(2)$ is well known, and formally stated in \cite[Lemma 4.10]{Tomatsu}). Let $l\in \frac{1}{2}\bn_0$ and let $u^{l}$ denote the $l$-th irreducible representation of $SU_{q}(2)$ (which is $2l+1$-dimensional). Then, the formul\ae{} \cite[Section 4, p.108]{Koo} imply that we have
\begin{equation*}
q_{\bt}^{(2l+1)}(u^l) = [z^{-2n}\delta_{n,m}]_{n,m=-l}^l,
\end{equation*}
where $q_{\bt}^{(2l+1)}(u^{l})$ denotes the suitable matrix lifting of $q_{\bt}$. Thus, if $\phi$ is a central functional, then we must have some coefficients $c_{l}\in \bc$ ($l\in \frac{1}{2}\bn_0$) with $c_{0} = 1$ such that for each $l\in \frac{1}{2}\bn_0$,
\begin{equation*}
\tau_\mu (z^{-2n}) = c_l, \;\;\; n=-l, -l+1, \ldots, l-1, l.
\end{equation*}
In other words, we must have some $c\in \bc$ such that for each $k\in \bn$ there is
\begin{equation*}
\tau_\mu (z^{2k})= 1, \;\; \tau_\mu(z^{2k+1}) = c.
\end{equation*}
It is easy to check that the character $\varepsilon_{\mathrm{alt}}$ corresponds to $c =-1$; then we can easily deduce that in fact $c\in [-1,1]$, which ends the proof (recall that the counit corresponds to $c=1$).
\end{proof}

Note that the direct counterpart of the above result cannot hold for $q \in\{-1,1\}$ simply because both $SU(2)$ and $SU_{-1}(2)$ are of Kac type, so that the tracial central states need not live on the torus subgroup (as the example of the Haar state shows).

To upgrade the last proposition to a statement valid for all $q$-deformations one needs to decode the formulas for the characters of $\Pol(\mathbb{G}_{q})$, identified with the points of the maximal torus of the underlying classical Lie group. Note that to make sense of the formulas below we can and do identify a maximal torus inside the classical group $G$ with the maximal classical subgroup inside $\QG_{q}$. We refer the reader to \cite{KS98} for the definition and properties of $\QG_{q}$.

\begin{prop}\label{prop:qdeformabstract}
Let $G$ be a simply connected compact semisimple Lie group, with a maximal torus $\bbT$, space of weights $\Pweight\cong \widehat{\bbT}$ and dominant weights $\Pweight^{+}$, and let $q \in (0,1)$. Given $\omega\in \Pweight^{+}$, we write $\Pi(\omega)$ for the saturated subset of $\Pweight$ associated to the highest weight $\omega$ (see for example \cite[Section 13.4]{Humphreys}). Then there is a one-to-one correspondence between
\begin{itemize}
\item[(i)] tracial central states $\tau$ on $\Pol(\mathbb{G}_q)$;
\item[(ii)] probability measures $\mu$ on $\bbT$ which satisfy the following: for every $\omega \in \Pweight^{+}$, there exists a constant $c_{\omega}\in \bc$ such that for every $\omega' \in \Pweight$, $\omega' \in \Pi(\omega)$, we have
\begin{equation*}
\int_{\bbT} \omega' d\mu = c_{\omega}.
\end{equation*}
\end{itemize}
The correspondence is given by the formula
\begin{equation}\label{formula:qTraces}
\tau(x) = \int_{\bbT} q_{\bbT}(x) d\mu, \;\; x \in \Pol(\mathbb{G}_q),
\end{equation}
where $q_{\bbT} : \Pol(\mathbb{G}_{q})\to \Pol(\bbT)$ is the surjective Hopf $*$-algebra map identifying $\bbT$ as a closed subgroup of $\mathbb{G}_{q}$.
\end{prop}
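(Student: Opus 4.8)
The plan is to combine Proposition~\ref{prop:Kactype} with the representation theory of $\QG_q$, reducing the whole statement to the torus.

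\textbf{Step 1 (reduction to the torus).} I would first recall that the maximal Kac quantum subgroup of $\QG_q$ is the classical maximal torus $\bbT$, with quotient map $q_{\bbT} : \Pol(\QG_q)\to\Pol(\bbT)$ as in the statement (this is standard; for $SU_q(2)$ it is \cite[Lemma~4.10]{Tomatsu}, and the general case follows from the structure of $\QG_q$ in \cite{KS98}). By Proposition~\ref{prop:Kactype} every tracial state on $\Pol(\QG_q)$ is then of the form $\tau_\mu\circ q_{\bbT}$, where $\tau_\mu(f)=\int_\bbT f\,d\mu$ for a probability measure $\mu$ on $\bbT$; the measure $\mu$ is unique since its Fourier coefficients $\int_\bbT\omega'\,d\mu$ are values of $\tau$ on matrix coefficients of $\QG_q$. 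Conversely, for any $\mu\in\mathrm{Prob}(\bbT)$ the functional $\tau_\mu\circ q_\bbT$ is a tracial state on $\QG_q$, being the composition of a state on the commutative algebra $\Pol(\bbT)$ with a unital $*$-homomorphism. So the content of the proposition is exactly to determine for which $\mu$ the state $\tau_\mu\circ q_\bbT$ is \emph{central}, and \eqref{formula:qTraces} is already the claimed parametrisation.

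\textbf{Step 2 (restriction of irreducibles to the torus).} The irreducibles of $\QG_q$ are indexed by $\omega\in\Pweight^+$, and the crucial input is that $\QG_q$ has the same character theory as $G$: pushing the character $\chi_\omega$ of $U^\omega$ through $q_\bbT$ yields the classical Weyl character of the $G$-module $V(\omega)$. Hence, in a weight basis of the carrier space of $U^\omega$, the matrix $q_\bbT^{(d_\omega)}(U^\omega)$ is diagonal, with diagonal entries the characters of $\bbT$ given by the weights $\omega'$ of $V(\omega)$, each $\omega'$ occurring with its classical multiplicity $m_\omega(\omega')\ge 1$. By the classical description of weights (\cite[Section~13.4]{Humphreys}), the set $\{\omega' : m_\omega(\omega')\ge 1\}$ is precisely the saturated set $\Pi(\omega)$.

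\textbf{Step 3 (centrality $\Leftrightarrow$ condition (ii)).} Finally I would translate centrality. Since the defining condition of a central functional is unchanged under replacing $U^\omega$ by an equivalent representative, I may work in the weight basis of Step~2. There $\tau(U^\omega_{ij})=\int_\bbT q_\bbT(U^\omega_{ij})\,d\mu$ vanishes for $i\ne j$, while $\tau(U^\omega_{ii})=\int_\bbT \omega'_i\,d\mu$ with $\omega'_i\in\Pi(\omega)$ the weight of the $i$-th basis vector. Thus $\tau$ is central if and only if, for each $\omega$, these diagonal values are independent of $i$; and since every element of $\Pi(\omega)$ appears among the $\omega'_i$, this says exactly that $\int_\bbT\omega'\,d\mu$ is constant (equal to some $c_\omega$) as $\omega'$ ranges over $\Pi(\omega)$, i.e.\ condition (ii). Assembling the three steps gives the asserted bijection, with $\mathrm{TCS}(\QG_q)$ on one side matched with the measures satisfying (ii) on the other. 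The individual steps are short, and the substantive ingredients — that the Kac part of $\QG_q$ is $\bbT$, and that $U^\omega$ restricts to $\bbT$ with classical weight multiplicities — are imported from \cite{Tomatsu}, \cite{KS98} and the classical theory. I expect the only point requiring genuine care to be the passage in Step~3 from the abstract, representative-dependent centrality condition to (ii): one must observe both that centrality is basis-independent and that a single weight basis simultaneously diagonalises $q_\bbT(U^\omega)$ and exhibits every weight of $\Pi(\omega)$ on the diagonal, so that nothing is lost in the translation.
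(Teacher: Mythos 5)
Your proposal is correct and follows essentially the same route as the paper: reduction to $\mathrm{Prob}(\bbT)$ via the maximal Kac quotient (Proposition \ref{prop:Kactype} together with \cite[Lemma 4.10]{Tomatsu}), followed by evaluating $q_{\bbT}(U^{\omega})$ in a weight basis, where it is diagonal with entries indexed by the weights in $\Pi(\omega)$, so that centrality becomes exactly the constancy condition (ii). Your explicit remarks that centrality is representative-independent and that every weight of $\Pi(\omega)$ occurs on the diagonal are precisely the points the paper's proof uses implicitly via the Korogodskii--Soibelman description of the irreducibles.
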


\begin{proof}
The fact that the maximal Kac quotient of $\mathbb{G}_{q}$ is $\bbT$ was proved in \cite[Lemma 4.10]{Tomatsu}. Hence, by Proposition \ref{prop:Kactype}, any tracial state on $C(\mathbb{G}_{q})$ (recall that $\mathbb{G}_{q}$ is coamenable, as follows for example from \cite{Tomatsu}, so that $C(\mathbb{G}_{q}) = C^{u}(\mathbb{G}_{q})$) is determined by a probability measure on $\bbT$, which we will denote by $\mu$. It is easy to see that conversely, for any $\mu\in \textup{Prob}(\bbT)$, the formula \eqref{formula:qTraces} defines a tracial state on $\Pol(\mathbb{G}_{q})$.

We are interested in central states; to that end we need to use the explicit formula for the map $q_{\bbT} :\Pol(\mathbb{G}_{q})\to \Pol(\bbT)$. Recall that we denote the set of weights associated with the Lie group $G$ by $\Pweight\cong \widehat{\bbT}$, with the set of dominant weights $\Pweight^{+}$. By results of Korogodskii-Soibelman in \cite{KS98} (see also the beginning of \cite[Subsection 4.1]{KrajczokSoltan}), each $\omega\in \Pweight^{+}$ determines an irreducible representation $U_{\omega}$ of $\mathbb{G}_{q}$ on a Hilbert space $H_{\omega}$, and in turn each irreducible representation of $\mathbb{G}_{q}$ is equivalent to one of the $U_{\omega}$'s, so that $\Irr(\mathbb{G}_{q})$ can be identified with $\Pweight^{+}$. Given a dominant weight $\omega\in \Pweight^{+}$ and a weight $\omega'\in \Pweight$, we have $\omega'\in \Pi(\omega)$ if and only if $H_{\omega}$ contains non-zero vectors of weight $\omega'$; we can then choose an orthonormal basis $(e_{1}, \cdots, e_{d_{\omega}})$ of $H_{\omega}$ such that each vector $e_{i}$ (for $i=1, \cdots, d_{\omega}$) has a well-defined weight $\omega_{i}\prec \omega$ (so that the set $\{\omega_{1}, \cdots, \omega_{d_{\omega}}\} = \Pi(\omega)$). Furthermore, the character (now understood simply as a multiplicative functional) associated with the element $t\in \bbT$ is given by the following formula:
\begin{equation*}
\chi_{t} (U^{\omega}_{i,j}) = \langle t, \omega_{i} \rangle \delta_{ij}. \;\;\; \omega\in \Pweight, i,j=1, \cdots, d_{\omega}.
\end{equation*}
	Suppose then that $\tau:C(\mathbb{G}_q)\to \bc$ is a tracial state, associated to the measure $\mu \in \textup{Prob}(\bbT)$.
	We have then for each $\omega \in \Pweight, i,j=1,\ldots, d_\omega$ the following equality:
	\[ \tau(U^{\omega}_{i,j}) = \int_{\bbT} \chi_t (U^{\omega}_{i,j}) d \mu(t) =  \int_{\bbT} \langle t, \omega_i \rangle \delta_{ij}d \mu(t).  \]
	Now it is easy to see that due to the description of irreducible representations given above, $\tau$ is central if and only if the integral above does not depend on $i=1,\ldots,d_{\omega}$; this in turn is equivalent to the condition stated in the proposition.
\end{proof}

The statement above would be fully satisfactory if we could describe explicitly probability measures on $\bbT$ satisfying the (purely classical/Lie-theoretic) condition in (ii) above. For that we need the following lemma, which uses the notation of Proposition \ref{prop:qdeformabstract}.

\begin{lem}\label{lem:equiv}
Assume that $\Phi$ is a root system (associated to a simply connected compact semisimple Lie group), with associated weight lattice $\Pweight$. Consider the following relation on $\Pweight$: 
\begin{equation*}
\omega' \sim \omega \text{ if } \omega'\in \Pweight,\, \omega\in \Pweight^{+},\, \omega'\in \Pi(\omega).
\end{equation*} 
Let $\approx$ be the equivalence relation generated on $\Pweight$ by $\sim$. Then, given $\lambda, \mu\in \Pweight$, we have $\lambda\approx \mu$ if and only if $\lambda - \mu \in \Pweight_{r}$ (the root lattice).
\end{lem}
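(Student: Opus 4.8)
The plan is to prove the two inclusions separately, the forward one being essentially immediate. Recall that by the very definition of a saturated set with highest weight $\omega$ (see \cite[\S13.4]{Humphreys}) every element of $\Pi(\omega)$ is $\preceq\omega$, hence differs from $\omega$ by a nonnegative integral combination of simple roots and in particular lies in $\omega+\Pweight_r$. Thus $\omega'\sim\omega$ already forces $\omega'-\omega\in\Pweight_r$, and running along a sequence $\lambda=\nu_0,\nu_1,\dots,\nu_k=\mu$ realising $\lambda\approx\mu$ and adding up the successive differences yields $\lambda-\mu\in\Pweight_r$.

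For the converse, the key point is that \emph{any} two dominant weights that are congruent modulo $\Pweight_r$ possess a common dominant upper bound for $\preceq$ inside whose saturated set both of them lie. Concretely, let $\theta$ be the highest root of $\Phi$ (for a reducible $\Phi$, take the sum of the highest roots of the irreducible components); it is a dominant element of $\Pweight_r$ whose expansion $\theta=\sum_i n_i\alpha_i$ in simple roots has \emph{all} coefficients $n_i\ge1$. Now take $\lambda,\mu\in\Pweight$ with $\lambda-\mu\in\Pweight_r$ and pass to their dominant $W$-conjugates $\lambda^+,\mu^+$; since $w\nu-\nu\in\Pweight_r$ for every $w\in W$ and $\nu\in\Pweight$, we still have $\lambda^+-\mu^+=\sum_i c_i\alpha_i\in\Pweight_r$ with $c_i\in\Z$. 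Set $\omega_0:=\lambda^++N\theta$ with $N\in\bn$ large enough that $c_i+Nn_i\ge0$ for all $i$. Then $\omega_0$ is dominant, while $\omega_0-\lambda^+=N\theta$ and $\omega_0-\mu^+=\sum_i(c_i+Nn_i)\alpha_i$ are both nonnegative integral combinations of simple roots, so $\lambda^+$ and $\mu^+$ are dominant weights $\preceq\omega_0$ and therefore lie in $\Pi(\omega_0)$ by \cite[Lemma 13.4B]{Humphreys}. As $\Pi(\omega_0)$ is $W$-stable (\cite[\S13.4]{Humphreys}), it also contains $\lambda\in W\lambda^+$ and $\mu\in W\mu^+$, i.e.\ $\lambda\sim\omega_0$ and $\mu\sim\omega_0$; hence $\lambda\approx\omega_0\approx\mu$.

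The only ingredient beyond elementary manipulations of $\preceq$ and the two defining properties of saturated sets recalled above is the positivity of the marks $n_i$ of the highest root, which is exactly what lets $\lambda^++N\theta$ dominate every dominant weight congruent to $\lambda^+$ modulo $\Pweight_r$ once $N$ is large. I do not expect a genuine obstacle here; the point deserving a little care is the passage to the reducible case — equivalently, the correct choice of $\theta$ — so that the estimate $c_i+Nn_i\ge0$ can be arranged simultaneously for all simple roots. (A more laborious alternative would be to descend from an arbitrary dominant weight through a chain of smaller dominant weights in $\Pi(\cdot)$ to the unique minimal, i.e.\ minuscule or zero, representative of its coset, but the one-step argument above avoids invoking the classification of minuscule weights.)
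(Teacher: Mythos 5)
Your argument is correct, and it takes a genuinely different route from the paper's. The paper reduces everything to the single step $\lambda\approx\lambda+\alpha$ for a simple root $\alpha$: it first treats dominant $\lambda$ by writing a large multiple of a suitable fundamental weight as a nonnegative combination of simple roots, invokes not just the statement but a refinement extracted from the \emph{proof} of \cite[Section 13.4, Lemma B]{Humphreys} (to see that $\lambda+\alpha$ also lies in the relevant saturated set), and then propagates the conclusion to arbitrary weights through several Weyl-group manipulations using \cite[Section 13.2, Lemma A]{Humphreys} and $W$-invariance of $\Pi(\cdot)$. You instead produce, for any $\lambda,\mu$ with $\lambda-\mu\in\Pweight_r$, a single common dominant weight $\omega_0=\lambda^++N\theta$ whose saturated set contains both: the only structural input beyond the statement of Lemma 13.4B, $W$-stability of $\Pi(\omega_0)$, and the elementary facts $w\nu-\nu\in\Pweight_r$ and dominance of $W$-representatives, is that the marks of the highest root (summed over irreducible components in the reducible case) are all $\geq 1$, which guarantees $c_i+Nn_i\ge 0$ for large $N$. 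This buys a shorter, one-step proof that avoids both the digging into Humphreys' proof and the back-and-forth with Weyl conjugates; the paper's inductive route, on the other hand, needs only fundamental weights and the published lemma statements plus their proofs, and makes the elementary relation $\lambda\approx\lambda+\alpha$ explicit, which some readers may find more transparent. Your handling of the reducible case (choosing $\theta$ as the sum of the components' highest roots so every simple root gets a positive coefficient) is exactly the point needing care, and you have addressed it correctly; note also that for the forward direction your appeal to the definition of a saturated set with highest weight $\omega$ (every element is $\preceq\omega$) replaces the paper's citation of Bump, with the same effect.
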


\begin{proof}
The forward implication follows from \cite[Proposition 25.4]{Bump}, which implies among other things that if $\omega'\in \Pweight, \omega\in \Pweight^{+}, \omega'\in \Pi(\omega)$ then $\omega - \omega' \in \Pweight_{r}$. Assume therefore that $\lambda\in\Pweight$, and $\alpha\in \Delta$ is a simple root. A moment of thought shows that it suffices to prove that $\lambda \approx \lambda + \alpha$.
	
Start by assuming that $\lambda \in \Pweight^+$ and $\alpha \in \Delta$ is a simple root. Let $\nu\in \Pweight^+$ be a fundamental weight and let us write the set of simple roots as $\Delta = \{\alpha=\alpha_1, \ldots, \alpha_l\}$. By the arguments of \cite[Section 13.1]{Humphreys}, there exist integers $n\in \bn$, $k_{1}, \cdots, k_{l}\in \bn_0$ such that
\begin{equation*}
n\nu = \sum_{i=1}^{l}k_{i}\alpha_{i}.
\end{equation*}
It is clear that $\lambda\prec \lambda + n\nu$ (i.e.\ $\lambda + n\nu - \lambda$ is a sum of positive roots), so by \cite[Section 13.4, Lemma B]{Humphreys} we can conclude that $\lambda\in \Pi(\lambda + n\nu)$. We will also use the following somewhat stronger statement, which follows from the proof of \cite[Section 13.4, Lemma B]{Humphreys}: if only the coefficient $k_{1}$ is non-zero, then $\lambda + \alpha \in \Pi(\lambda + n\nu)$. Note that possibly changing $\nu$, we can achieve $k_{1} > 0$ (as the fundamental weights form a basis) -- so in particular we can deduce that $\lambda \approx \lambda + \alpha$. 
	
Assume now that $\mu\in \Pweight^{+}$ is another dominant weight and that $\lambda - \mu \in \Pweight_{r}$. This means that there exist some integer coefficients $k_{1}, \cdots, k_{r}\in \bz$ such that
\begin{equation*}
\lambda - \mu = \sum_{i=1}^{r} k_{i}\alpha_{i}.
\end{equation*}
Dividing the sum into parts corresponding to positive and negative coefficients we deduce that for some $p_{1},\cdots, p_{r}\in \bn_0$, we have that
\begin{equation*}
\lambda + \sum_{i=1}^{r} p_{i}\alpha_{i} = \mu + \sum_{i=1}^{r} p_{i}\alpha_{i}.
\end{equation*}
The previous paragraph then implies that $\lambda\approx \mu$.
	
Let now $\lambda\in \Pweight$ be arbitrary. By \cite[Section 13.2, Lemma A]{Humphreys}, there exists $w\in W$ (the Weyl group) such that $w\lambda \in \Pweight^{+}$. By the arguments above, we have that there exists $n\in \N$ and a fundamental weight $\mu\in \Pweight^{+}$ such that $w\lambda\in \Pi(w\lambda + n\mu)$. As the set on the right hand side is invariant under the Weyl group action (again by \cite[Section 13.4, Lemma A]{Humphreys}), for any $w'\in W$ we have that $w'\lambda \sim w\lambda + n\mu$. But this means that $w'\lambda \approx w\lambda$, so also $w'\lambda \approx \lambda$ for any $w'\in W$. Note that this in particular, by the very first part of the proof, implies that $w'\lambda - \lambda \in \Pweight_{r}$ for any $w'\in W$.
	
Considering again arbitrary $\lambda \in\Pweight$ and $\alpha\in \Delta$. By \cite[Section 13.2, Lemma A]{Humphreys}, we have $v,w \in W$ such that $w\lambda \in\Pweight^{+}$ and $v(\lambda + \alpha)\in \Pweight^{+}$. We then have
\begin{equation*}
w \lambda - v(\lambda + \alpha) = w\lambda - \lambda + \lambda - v\lambda - v\alpha \in \Pweight_{r},
\end{equation*}
so by the result of two paragraphs above $w\lambda \approx v(\lambda + \alpha)$. By the last paragraph, this implies that $\lambda \approx \lambda +\alpha$, concluding the proof.
\end{proof}

The next theorem is the main result of this section.

\begin{thm}\label{thm:qdeform}
Let $G$ be a simply connected compact semisimple Lie group and let $q\in (0,1)$. Recall that we can identify the center of $G$ with a closed subgroup of the maximal torus $\bbT$, so with a closed quantum subgroup of $\QG_{q}$. Then, there is a one-to-one correspondence between
\begin{itemize}
\item[(i)] tracial central states $\tau$ on $\Pol(\mathbb{G}_{q})$;
\item[(ii)] probability measures $\mu$ on $Z(G)$.
\end{itemize}
The correspondence is given by the formula
\begin{equation*}
\tau(x) = \int_{\bbT} q_{\bbT}(x) d\mu, \;\; x \in \Pol(\mathbb{G}_{q}).
\end{equation*}
In particular the space of extremal central tracial states on $\mathbb{G}_{q}$ is finite and can be identified with $Z(G)$.
\end{thm}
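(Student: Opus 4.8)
The plan is to assemble Proposition~\ref{prop:qdeformabstract} and Lemma~\ref{lem:equiv}, and then reinterpret the condition they single out on a measure $\mu\in\textup{Prob}(\bbT)$ as a Fourier-analytic statement about the quotient torus $\bbT/Z(G)$. For $\mu\in\textup{Prob}(\bbT)$ write $f_\mu\colon\Pweight\to\bc$, $f_\mu(\omega')=\int_{\bbT}\langle t,\omega'\rangle\,d\mu(t)$ --- essentially the Fourier transform of $\mu$ on the compact abelian group $\bbT$, whose Pontryagin dual is $\widehat{\bbT}=\Pweight$. By Proposition~\ref{prop:qdeformabstract} the tracial state $\tau=\int_{\bbT}q_{\bbT}(\cdot)\,d\mu$ is central exactly when $f_\mu$ is constant on each saturated set $\Pi(\omega)$, $\omega\in\Pweight^{+}$. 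Since each dominant weight lies in its own saturated set, this is the same as $f_\mu$ being constant along the relation $\sim$ of Lemma~\ref{lem:equiv}, hence along the equivalence $\approx$ it generates; and by that lemma $\approx$ is precisely congruence modulo the root lattice $\Pweight_r$. So $\tau$ is a tracial central state if and only if $f_\mu$ is invariant under translation by $\Pweight_r$.

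Next I would invoke the standard description of the center: for $G$ simply connected compact semisimple, $Z(G)$ is the common kernel in $\bbT$ of all the roots, so $Z(G)=\Pweight_r^{\perp}$ inside $\bbT$ and, dually, $Z(G)^{\perp}=\Pweight_r$ in $\widehat{\bbT}=\Pweight$, with $\widehat{Z(G)}\cong\Pweight/\Pweight_r$ --- a finite group. If $\pi\colon\bbT\to\bbT/Z(G)$ is the quotient map, the characters of $\bbT/Z(G)$ are exactly the $\chi\circ\pi$ with $\chi$ running over $\widehat{\bbT/Z(G)}=Z(G)^{\perp}=\Pweight_r$.

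With this the correspondence is short. For $\nu\in\textup{Prob}(Z(G))$, its pushforward $\mu:=\iota_{*}\nu$ along the inclusion $\iota\colon Z(G)\hookrightarrow\bbT$ satisfies $f_\mu(\omega')=\int_{Z(G)}\langle z,\omega'\rangle\,d\nu(z)$, which depends only on the class of $\omega'$ in $\Pweight/\Pweight_r$; hence $f_\mu$ is $\Pweight_r$-periodic and $\mu$ defines a tracial central state via~\eqref{formula:qTraces}. Conversely --- the only step needing an idea --- suppose $f_\mu$ is $\Pweight_r$-invariant; since $0\in\Pweight_r$ and $f_\mu(0)=1$ we get $f_\mu\equiv 1$ on $\Pweight_r$, which says that every Fourier coefficient of $\pi_{*}\mu\in\textup{Prob}(\bbT/Z(G))$ equals $1$. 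By injectivity of the Fourier transform on measures, $\pi_{*}\mu=\delta_{e}$, so $\mu$ is carried by $\pi^{-1}(e)=Z(G)$, i.e.\ $\mu=\iota_{*}\nu$ with $\nu:=\mu|_{Z(G)}$. As $\iota_{*}$ is injective on measures, $\nu\mapsto\iota_{*}\nu$ is a bijection from $\textup{Prob}(Z(G))$ onto the set of measures on $\bbT$ satisfying Proposition~\ref{prop:qdeformabstract}(ii); composing with that proposition gives the asserted one-to-one correspondence, which is visibly affine (and in fact a weak$^{*}$-homeomorphism).

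Finally, an affine bijection sends extremal points to extremal points, and since $Z(G)$ is finite the extremal points of $\textup{Prob}(Z(G))$ are exactly the Dirac masses $\delta_z$, $z\in Z(G)$; unwinding~\eqref{formula:qTraces}, $\delta_z$ corresponds to the character $x\mapsto\chi_{z}(q_{\bbT}(x))$, evaluation at the central element $z$. Hence $\mathrm{TCS}(\QG_q)$ has exactly $|Z(G)|$ extremal points, naturally identified with $Z(G)$. I expect the main obstacle to be organisational rather than deep: keeping the lattice/Pontryagin-duality bookkeeping of the second paragraph straight, and making sure that ``constant on every $\Pi(\omega)$'' is genuinely equivalent to $\Pweight_r$-periodicity of $f_\mu$ --- both of which rest on Lemma~\ref{lem:equiv} together with standard facts about root data, everything else being routine.
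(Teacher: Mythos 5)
Your argument is correct and follows essentially the same route as the paper: combine Proposition \ref{prop:qdeformabstract} with Lemma \ref{lem:equiv} to translate centrality into $\Pweight_{r}$-periodicity of the Fourier transform of $\mu$, then use uniqueness of Fourier coefficients of measures together with $Z(G)=\bigcap_{\alpha\in\Phi}\mathrm{Ker}\,\chi_{\alpha}$ to force $\mu$ to be supported on $Z(G)$, and finally read off the extreme points from the affine bijection with $\textup{Prob}(Z(G))$. The only difference is cosmetic: the paper deduces $\chi_{\alpha}\equiv 1$ on $\supp(\mu)$ root by root via a Radon--Nikodym-type argument, whereas you push $\mu$ forward to $\bbT/Z(G)$ and identify $\pi_{*}\mu=\delta_{e}$ --- both rest on the same injectivity of the Fourier transform.
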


\begin{proof}
Fix  a probability measure $ \mu \in \textup{Prob}(\bbT)$. In view of Proposition \ref{prop:qdeformabstract} it suffices to show that $\mu$ satisfies the condition (ii) of  Proposition \ref{prop:qdeformabstract} if and only if it is supported on $Z(G)$. 
	
We identify again $\Pweight$ with the dual of $\bbT$; for clarity, given $\omega\in \Pweight$, we will write $\chi_{\omega}$ for the corresponding character on $\bbT$. Let $\Phi$ denote the root space of $G$. Lemma \ref{lem:equiv} implies that condition (ii) for $\mu$ is equivalent to the following fact: 
\begin{equation*}
\int_{\bbT} \chi_\omega d \mu = \int_{\bbT} \chi_{\omega+ \alpha}  d \mu = \int_{\bbT} \chi_{\omega}\chi_\alpha  d \mu, \;\;\; \omega \in \Pweight,\alpha \in \Phi.
\end{equation*}
By an easy Radon-Nikodym type argument together with the fact that a bounded measure is determined by its Fourier coefficients, we see that the displayed condition is equivalent to
\begin{equation*}
\chi_\alpha (t) =1, \;\;\; t \in \supp(\mu),\alpha \in \Phi.
\end{equation*}
In other words,
\begin{equation*}
\supp(\mu)\subset \bigcap_{\alpha \in \Phi} \textup{Ker}\, \chi_\alpha.
\end{equation*}
By \cite[Proposition  22.3 (i)]{Bump} the set on the right hand side equals $Z(G)$. This proves the first assertion. As for the finiteness of the set of extremal tracial central states, if follows from the fact that under the assumptions of the theorem, $Z(G)$ must be finite.
\end{proof}

\begin{rmk}
	We thank the referee for suggesting the following alternative argument proving Theorem \ref{thm:qdeform} without appealing to Lemma \ref{lem:equiv}. Note that the proof of Proposition \ref{prop:qdeformabstract} may be used to characterise these tracial central states on $\Pol(G)$ (i.e.\ conjugation invariant probability measures on $G$) which are supported on the maximal torus $\bbT\subset G$ as these which satisfy condition (ii) there. But the support of a conjugation invariant measure must be also conjugation invariant, and it is a simple consequence of Cartan's theorem on maximal tori (\cite[Theorem 16.5]{Bump}) that we have $\bigcap_{g \in G} g \bbT g^{-1} = Z(G)$. Whilst this argument is undoubtedly simpler, we believe that the statement of Lemma \ref{lem:equiv} might be of independent interest.
\end{rmk}

\section{Free orthogonal quantum groups}\label{sec:orthogonal}

We now turn to another family of examples, which in a sense generalizes  the case of $SU_{q}(2)$. These are the free orthogonal quantum groups first introduced in \cite{Wan1} (see also \cite{VDW}). Throughout this section, we fix an integer $N\geqslant 2$. The CQG-algebra of the quantum group $O_{N}^{+}$ is then the universal $*$-algebra generated by $N^{2}$ elements $(u_{ij})_{1\leqslant i, j\leqslant N}$ which are self-adjoint and satisfy the relations
\begin{equation*}
\sum_{k=1}^{N}u_{ik}u_{jk} = \delta_{ij} = \sum_{k=1}^{N}u_{ki}u_{kj}, \;\;\; i,j=1,\ldots,N.
\end{equation*}
The fusion rules of $O_{N}^{+}$ were computed in \cite{Ban1} and can be summarized as follows: the irreducible representations can be indexed by non-negative integers in such a way that $u^{0}$ is the trivial representation, $u^{1} = u$ and for any $n\in \bn$,
\begin{equation*}
u^{1}\otimes u^{n} = u^{n-1}\oplus u^{n+1}.
\end{equation*}
Note that it follows from this that the characters of irreducible representations are self-adjoint. Note also that $O_N^+$ is a compact quantum group of Kac type.

To describe $\mathrm{TCS}(O_{N}^{+})$, we first have to prove a technical result. The key idea is that the conjuction of traciality and centrality imposes strong constraints on the values of a state on characters. Precisely speaking, these values are completely determined by the values on the first two non-trivial characters. Proving this, however, requires some information on the Haar state when evaluated on polynomials in the generators. This can be done through the Weingarten formula obtained in \cite{BB1}. To state the formula, first recall that a \emph{pairing} of a set is a partition into subsets of cardinality two, and that if the underlying set is ordered, such a pairing is \emph{non-crossing} if it can be drawn with lines connecting pairs of points in a row without crossing each other. Fix for a moment $n \in \bn, n \geq 2$. Given a multi-index $i = (i_{1}, \cdots, i_{n})$ (with $i_j$ taking values in some fixed set) and a non-crossing pairing $\pi$ of $\{1, \cdots, n\}$, we will say that $i$ matches $\pi$ if whenever two elements $a$, $b$ are in the same pair of $\pi$, we have $i_{a} = i_{b}$. We will express this via a function $\delta_{\pi}$ defined on multi-indices as above, equal to $1$ for tuples matching $\pi$ and to $0$ otherwise. The formulation of the Weingarten formula involves a matrix indexed by the set of all non-crossing pairings of given size $n$, which we denote by $NC_{2}(n)$. Given $\pi, \sigma\in NC_{2}(n)$, we denote by $\pi\vee\sigma$ the partition obtained by merging any blocks of $\pi$ and $\sigma$ having a common point (note that this is not a pairing anymore). If $b(\pi)$ denotes the number of blocks of a partition, then the \emph{Gram matrix} of $O_{N}^{+}$ is given by the coefficients
\begin{equation*}
G_{N}(\pi, \sigma) = N^{b(\pi\vee\sigma)}, \;\; \pi, \sigma \in NC_2(n).
\end{equation*}
As soon as $N\geqslant 2$, the matrix $G_{N}$ is invertible, and its inverse is the Weingarten matrix $W_{N}$. The Weingarten formula then reads
\begin{equation}\label{eq:weingartenorthogonal}
h(u_{i_{1}j_{1}}\cdots u_{i_{n}j_{n}}) = \sum_{\pi, \sigma\in NC_{2}(n)}\delta_{\pi}(i)\delta_{\sigma}(j)W_{N}(\pi, \sigma),
\end{equation}
for all even $n\in \bn$ and  $ i_{1}, \ldots, i_{n},j_{1}, \ldots, j_n \in \{1, \ldots,N\}$. A first remarkable consequence of that formula is that moments are always real numbers. Let us now state and prove our technical result, recalling that for a central functional $\phi:\Pol(O_N^+)$ we write simply $\phi_n:=\phi(\chi_n)$ for the character of the representation $u^n$ (with $n \in \bn_0$).

\begin{prop}\label{prop:recursionorthogonal}
Let $N\geqslant 3$. There exist sequences $(a_{n})_{n\in \bn}$ and $(b_{n})_{n\in \bn}$ of real numbers such that for every tracial central linear functional $\phi$ on $O_{N}^{+}$ and every $n\in \bn$,
\begin{equation*}
\phi_{n} = \left\{\begin{array}{ccc}
a_{n}\phi_{1} & \textup{ if} & n \textup{ is odd, } \\
b_{n}\phi_{2} & \textup{ if}  & n \textup{  is even. } \\
\end{array}\right.
\end{equation*}
\end{prop}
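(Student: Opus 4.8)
The plan is to turn centrality and traciality into explicit relations among the numbers $\phi_n$. Since $\phi$ is central and $O_N^+$ is of Kac type with self-adjoint characters, expanding $x$ into matrix coefficients and using Woronowicz--Peter--Weyl orthogonality gives
\begin{equation*}
\phi(x)=\sum_{m\in\bn_0}\phi_m\,h(\chi_m x),\qquad x\in\Pol(O_N^+),
\end{equation*}
with only finitely many nonzero terms (equivalently $\phi=\phi\circ\E$, together with the formula $\E(x)=\sum_m h(\chi_m x)\chi_m$ for the central conditional expectation). Thus $\phi$ is entirely encoded by $(\phi_m)_{m\in\bn_0}$, and the question is which sequences can occur.

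The pairs I would feed into traciality are $x=u_{ij}$ and $y=(u^{n-1})_{ab}$, a matrix coefficient of the irreducible corepresentation $u^{n-1}$ on its carrier space $H_{n-1}$ (write $H_k$ for the carrier space of $u^k$). By the fusion rules, $u^1\otimes u^{n-1}\cong u^{n}\oplus u^{n-2}$ and $u^{n-1}\otimes u^1\cong u^{n-2}\oplus u^{n}$, multiplicity-free with exactly these two summands; hence, writing $c_m:=\phi_m/d_m$ and denoting by $P_n,P_{n-2}$ (resp.\ $P_n',P_{n-2}'$) the orthogonal projections of $H_1\otimes H_{n-1}$ (resp.\ of $H_{n-1}\otimes H_1$) onto the $u^{n}$- and $u^{n-2}$-isotypic subspaces, centrality gives
\begin{equation*}
\phi\big(u_{ij}(u^{n-1})_{ab}\big)=c_n\,(P_n)_{(i,a),(j,b)}+c_{n-2}\,(P_{n-2})_{(i,a),(j,b)}
\end{equation*}
and
\begin{equation*}
\phi\big((u^{n-1})_{ab}u_{ij}\big)=c_n\,(P_n')_{(a,i),(b,j)}+c_{n-2}\,(P_{n-2}')_{(a,i),(b,j)}.
\end{equation*}
Since there are only two summands, $P_{n-2}=\mathrm{id}-P_n$ and $P_{n-2}'=\mathrm{id}-P_n'$; substituting, the ``$c_{n-2}$'' terms cancel and traciality of $\phi$ forces
\begin{equation*}
(c_n-c_{n-2})\,\big((P_n)_{(i,a),(j,b)}-(P_n')_{(a,i),(b,j)}\big)=0
\end{equation*}
for all $1\le i,j\le N$ and $1\le a,b\le d_{n-1}$; writing $\Sigma\colon H_1\otimes H_{n-1}\to H_{n-1}\otimes H_1$ for the flip, this reads $(c_n-c_{n-2})(P_n-\Sigma^*P_n'\Sigma)=0$. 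If one knows $P_n\neq\Sigma^*P_n'\Sigma$ for every $n\ge3$, then $c_n=c_{n-2}$, i.e.\ $\phi_n/d_n=\phi_{n-2}/d_{n-2}$; iterating down to $\phi_1$ when $n$ is odd and to $\phi_2$ when $n$ is even yields $\phi_n=(d_n/N)\phi_1$ and $\phi_n=(d_n/d_2)\phi_2$, which is exactly the claim with $a_n:=d_n/N$ and $b_n:=d_n/d_2$ (real, and $a_1=b_2=1$). All of this is routine bookkeeping.

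The one substantial input is the inequality $P_n\neq\Sigma^*P_n'\Sigma$. Realising $H_1\otimes H_{n-1}$ and $H_{n-1}\otimes H_1$ inside $H_1^{\otimes n}$, the flip $\Sigma$ becomes the cyclic-shift operator $\rho$ of the $n$ tensor factors, while $P_n$ becomes the Jones--Wenzl projection $p_n\in\mathrm{End}(H_1^{\otimes n})$ (its range is the unique copy of $u^n$ in $H_1^{\otimes n}$); so the claim is that $p_n$ is \emph{not} invariant under cyclic rotation of the strands --- equivalently, that the $u^n$-isotypic subspace of $H_1^{\otimes n}$ is not $\rho$-invariant. For a classical group this subspace \emph{is} invariant (coordinate permutations are intertwiners), so the point genuinely exploits that the flip fails to be an intertwiner for $O_N^+$. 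To prove it I would exhibit a single nonzero entry of $p_n-\rho^{-1}p_n\rho$: pairing the two matrix coefficients above against a suitable monomial in the $u_{ij}$ and ``closing them up'' reduces such an entry to a value of the Haar state on an explicit polynomial of degree $2n$ in the generators, which is computed by the Weingarten formula \eqref{eq:weingartenorthogonal}; a careful choice of indices keeps the sum over $NC_2(2n)$ tractable, and one checks the entry does not vanish, the hypothesis $N\ge3$ being used at this step. I expect this Weingarten computation, rather than any conceptual difficulty, to be the main obstacle; granting it, an induction on $n$ with base cases $a_1=b_2=1$ finishes the proof.
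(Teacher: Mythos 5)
Your reduction is correct as far as it goes, and it is a genuinely different route from the paper's: applying the central functional to the coefficients of $u^{1}\otimes u^{n-1}$ and of $u^{n-1}\otimes u^{1}$, using that both decompose multiplicity-freely as $u^{n-2}\oplus u^{n}$ to eliminate the $c_{n-2}$-terms, and concluding $(c_{n}-c_{n-2})\bigl(P_{n}-\Sigma^{*}P_{n}'\Sigma\bigr)=0$ is sound, and together with the induction it would give the proposition with $a_{n}=d_{n}/d_{1}$, $b_{n}=d_{n}/d_{2}$ (which is indeed the correct normalisation). However, the entire weight of the argument now rests on the claim $P_{n}\neq\Sigma^{*}P_{n}'\Sigma$ for every $n\geqslant 3$, i.e.\ that the carrier space $R_{n}$ of the unique copy of $u^{n}$ in $H_{1}^{\otimes n}$ (the Jones--Wenzl range) is not invariant under the cyclic shift, equivalently that the wrap-around contraction pairing the first and last legs does not vanish identically on $R_{n}$. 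You do not prove this: you only indicate that you would exhibit a nonzero entry of $p_{n}-\rho^{-1}p_{n}\rho$ via a Weingarten computation and ask that it be granted. That non-vanishing is precisely the technical heart of the proposition, so as written the proposal has a genuine gap. (For $n=3$ it is easy: with $E=\sum_{a}e_{a}\otimes e_{a}$, the vector $\sum_{a}e_{a}\otimes e_{1}\otimes e_{a}-\frac{1}{N+1}\,(e_{1}\otimes E+E\otimes e_{1})$ lies in $R_{3}$, while its $(1,3)$-contraction equals $\bigl(N-\frac{2}{N+1}\bigr)e_{1}\neq 0$; but you need all $n\geqslant 3$, and no general argument is offered.)

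It is also worth noting that the device by which the paper avoids any explicit Weingarten evaluation does not transfer to your symmetric formulation. The paper compares $\phi(AB)$ and $\phi(BA)$ for $A=(u_{i}u_{j})^{k}$ and $B=u_{i}$ (resp.\ $B=u_{l}u_{i}$ in the even case, which is where $N\geqslant 3$ enters), and an analysis of which non-crossing pairings can match these words shows that in the expansion through $\E$ only the top character survives for $AB$, while for $BA$ only $\chi_{n-1}$ and $\chi_{n+1}$ survive, with $h(BA\chi_{n-1})=h(BAAB)>0$ by faithfulness of $h$; the resulting identity $[h(AB\chi_{n+1})-h(BA\chi_{n+1})]\phi_{n+1}=h(BA\chi_{n-1})\phi_{n-1}$ holds for every tracial central functional, and testing it with $\phi=\varepsilon$ forces the bracket to be nonzero, so the needed non-vanishing comes for free. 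In your identity the counit satisfies $c_{n}=c_{n-2}$ and therefore gives no information whatsoever about $P_{n}-\Sigma^{*}P_{n}'\Sigma$, so the rotation non-invariance really must be established by hand (an explicit-vector argument in the spirit of the FTW-type lemma the paper uses for $S_{N}^{+}$ is a plausible route). Until that step is supplied, the proof is incomplete.
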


\begin{proof}
For $1\leqslant i\leqslant N$, we will write $u_{i} = u_{ii}$ in order to lighten notations. The proof will be done by induction, setting $a_{1} = 1 = b_{2}$.

Let $n\in \bn$ and assume that the result holds for any $k\leqslant n$. We start with the case where $n = 2n'$ is even. Let us fix $1\leqslant i\neq j\leqslant N$ and set $A = (u_{i}u_{j})^{n'}$ and $B = u_{i}$. First, observe that by centrality,
\begin{align*}
\phi(AB) = \phi\circ\E(AB) = \sum_{i=0}^{+\infty}h(AB\chi_{i}^{*})\phi_{i}.
\end{align*}
By traciality, this equals $\phi(BA)$ so that we have
\begin{equation*}
\sum_{i=0}^{+\infty}h(AB\chi_{i}^{*})\phi_{i} = \sum_{i=0}^{+\infty}h(BA\chi_{i}^{*})\phi_{i}.
\end{equation*}
Recall that as mentioned earlier in Section \ref{sec:prelim}, the sums above are in fact finite.

We will now use the Weingarten formula to derive more information about the coefficients appearing in the equality above, starting with the terms involving $BA$. Consider, for $0\leqslant m\leqslant n+1$ and $1\leqslant k_{1}, \cdots, k_{m}\leqslant N$, the moment
\begin{equation*}
h((u_{i}u_{j})^{n'}u_{i}u_{k_{1}}\cdots u_{k_{m}}).
\end{equation*}
Observe that none of the first $n$ terms can be paired together via a non-crossing pairing, because between any two $u_{i}$'s there is an odd number of terms. Therefore, each of the first $n+1$ terms has to be paired with one of the last $m$ terms. If $m < n+1$, this is not possible, hence the sum vanishes in that case. Moreover, any non-crossing pairing contains an \emph{interval}, i.e.\ a set of the form $\{l,l+1,\ldots, l+k\}$, and by the preceding reasoning, if $m = n+1$ then that interval must pair the $(n+1)$-th term with the $(n+2)$-th one. Summing up, there is only one non-crossing pairing which matches the product, and it yields
\begin{align*}
h((u_{i}u_{j})^{n}u_{i}u_{k_{1}}\cdots u_{k_{n+1}}) & = h((u_{i}u_{j})^{n}u_{i}u_{i}(u_{j}u_{i})^{n}) \\
& = h(ABBA).
\end{align*}
Another way of stating what we have shown so far is
\begin{equation*}
h(AB\chi_{1}^{l}) = \sum_{k_{1}, \cdots, k_{l} = 1}^{N}h(ABu_{k_{1}}\cdots u_{k_{l}}) = \delta_{l, n+1}h(ABBA).
\end{equation*}
Now, it follows from the fusion rules that $\chi_{m}$ is a monic polynomial in $\chi_{1}$ of degree $m$, so that we have proven that in fact
\begin{equation*}
h(AB\chi_{m}) = \left\{\begin{array}{ccc}
0 & \text{ if } & m < n+1 \\
h(ABBA) & \text{ if } & m = n+1
\end{array}\right..
\end{equation*}

Turning now to $BA$, we still have that when applying the Weingarten formula to
\begin{equation*}
h(u_{i}^{2}(u_{j}u_{i})^{n'-1}u_j u_{k_{1}}\cdots u_{k_{m}}),
\end{equation*}
any of the terms in $(u_{j}u_{i})^{n'-1}$ must be paired with some $u_{k}$. The reason for that is that if some $u_{i}$ there was paired to one of the first two $u_{i}$'s, then this pairing would enclose an odd number of indices. As a consequence, the moment vanishes if $m < n-1$. It also vanishes for $m = n$ because there is then an odd number of terms in the moment. Therefore, we are left with $m = n-1$ and $m = n+1$. By the same argument as before, for $m = n-1$ there is only one possible non-crossing pairing, yielding
\begin{align*}
h(BA\chi_{n-1}) & = h(BA\chi_{1}^{n-1}) = \sum_{k_{1}, \cdots, k_{n-1} = 1}^{N}h(u_{i}^{2}(u_{j}u_{i})^{n'-1}u_j u_{k_{1}}\cdots u_{k_{n-1}}) \\
& = h(u_{i}^{2}(u_{j}u_{i})^{n'-1}(u_{j}u_{i})^{n'-1})  = h(B\tilde{A}\tilde{A}B),
\end{align*}
with $\tilde{A}= (u_{j}u_{i})^{n'-1}$.
Because $B\tilde{A}\tilde{A}B = B\tilde{A}(B\tilde{A})^{*}$ is positive, $B\tilde{A}$ is non-zero (as can be seen considering the classical group $O_N$) and $h$ is faithful on $\Pol(O_{N}^{+})$, we conclude that $h(BA\chi_{n-1})$ is non-zero.

Summing up, we have proven that
\begin{equation*}
[h(AB\chi_{n+1}) - h(BA\chi_{n+1})]\phi_{n+1} = h(BA\chi_{n-1})\phi_{n-1}
\end{equation*}
with $h(BA\chi_{n-1}) > 0$. Since this must hold for any tracial central state, it holds in particular for $\phi = \varepsilon$, for which $\phi_{n+1}, \phi_{n-1}\neq 0$. We conclude that $h(AB\chi_{n+1}) - h(BA\chi_{n+1})\neq 0$ and dividing shows that $\phi_{n+1} = a\phi_{n-1}$ for a certain $a\neq 0$. Applying the induction hypothesis then yields the result with $a_{n+1} = a  a_{n-1}$.

We still have to deal with the case of $n$ odd. This time we will use elements $A = (u_{i}u_{j})^{n'}$ and $B = u_{k}u_{i}$ for three distinct indices $i, j, k\in \{1, \ldots, N\}$. Note that it is in this case that we need the condition $N \geq 3$. Set $n=2n'+1$ and pick $A$, $B$ as above. Using the arguments as above we check first that $h(AB\chi_{m})=0$ unless $m=n+1$ and then that $h(BA\chi_m)=0$ unless $m \in \{n-1, n+1\}$. We claim further that $h(BA\chi_{n+1})=0$. This would give the desired statement, as then we'll obtain 
\[ h(AB \chi_{n+1})\phi_{n+1} = \phi(AB) = \phi(BA) = h(BA \chi_{n-1}) \phi_{n-1}, \]
and the conclusion holds as $h(AB \chi_{n+1}) = h(ABBA) >0$.

So indeed, consider $h(BA \chi_{n+1}) = h(u_k u_i^2 u_j (u_i u_j)^{n'-1} \chi_{n+1})$. By the arguments (and the notation) introduced below in the proof of Lemma \ref{lem:nonvanishing} it suffices to observe that the vectors of the form $e_k \otimes e_i^{\otimes 2} \otimes e_j \otimes (e_i \otimes e_j)^{\otimes n'}$ belong to the orthogonal complement of the projection onto the subspace $H_{n+1}$ inside $H_1^{\otimes (n+1)}$. The last statement can be however deduced from \cite[Lemma 6.2]{FTW}.
\end{proof}

Note that at this stage, Proposition \ref{prop:recursionorthogonal} only provides us with a necessary condition, and we formally do not know yet whether the formula above determines a tracial functional; we have only used the tracial condition on polynomials in diagonal elements. To clarify this and provide a complete statement, we first introduce another (apart from the counit) natural character on $O_{N}^{+}$.

\begin{prop}\label{prop:altcounit}
The formula
\begin{equation*}
\varepsilon_{alt}(\chi_{n}) = (-1)^{n} d_{n}
\end{equation*}
for $n\in \bn$ defines a central character on $\Pol(O_{N}^{+})$.	
\end{prop}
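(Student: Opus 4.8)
The plan is to exhibit $\varepsilon_{\mathrm{alt}}$ explicitly as a genuine $*$-homomorphism from $\Pol(O_N^+)$ to $\C$, since a central functional that is multiplicative and $*$-preserving is automatically a central character. The natural candidate is the map sending the fundamental corepresentation $u = (u_{ij})$ to $-I_N$; that is, $\varepsilon_{\mathrm{alt}}(u_{ij}) = -\delta_{ij}$. First I would check that this assignment is compatible with the defining relations of $\Pol(O_N^+)$: the matrix $-I_N$ is self-adjoint and orthogonal, i.e. $\sum_k (-\delta_{ik})(-\delta_{jk}) = \delta_{ij} = \sum_k(-\delta_{ki})(-\delta_{kj})$, so by universality the assignment extends (uniquely) to a unital $*$-homomorphism $\varepsilon_{\mathrm{alt}}\colon \Pol(O_N^+)\to\C$. (Equivalently: $\varepsilon_{\mathrm{alt}} = \varepsilon \circ \theta$, where $\theta$ is the Hopf $*$-automorphism of $\Pol(O_N^+)$ determined by $u_{ij}\mapsto -u_{ij}$, which exists by the same universality check.)

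Next I would compute the value of $\varepsilon_{\mathrm{alt}}$ on the characters $\chi_n$. Since $\varepsilon_{\mathrm{alt}}$ is multiplicative and, by the fusion rules recalled above, $\chi_n$ is a (monic, integer-coefficient) polynomial $P_n$ in $\chi_1$ with $\chi_1 = \sum_i u_{ii}$, we get $\varepsilon_{\mathrm{alt}}(\chi_1) = -N = -\chi_1(\text{evaluated at }I_N)$, and more precisely $\varepsilon_{\mathrm{alt}}(\chi_n) = P_n(-N)$. On the other hand the counit satisfies $\varepsilon(\chi_n) = d_n = P_n(N)$. A clean way to relate the two is to observe that the recursion $\chi_1\chi_n = \chi_{n-1}+\chi_{n+1}$ (with $\chi_0 = 1$, $\chi_1 = \chi_1$) forces $P_n(-x) = (-1)^n P_n(x)$ for all $n$ by an immediate induction: if $P_{n-1}(-x) = (-1)^{n-1}P_{n-1}(x)$ and $P_n(-x) = (-1)^nP_n(x)$, then $P_{n+1}(-x) = (-x)P_n(-x) - P_{n-1}(-x) = (-1)^{n+1}(xP_n(x) - P_{n-1}(x)) = (-1)^{n+1}P_{n+1}(x)$. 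Hence $\varepsilon_{\mathrm{alt}}(\chi_n) = P_n(-N) = (-1)^n P_n(N) = (-1)^n d_n$, which is the asserted formula.

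Finally I would note centrality: $\varepsilon_{\mathrm{alt}}(u_{ij}) = -\delta_{ij}$ is of the required diagonal form with $c_n = \varepsilon_{\mathrm{alt}}(\chi_n)/d_n = (-1)^n$, so $\varepsilon_{\mathrm{alt}}$ is a central functional; being also multiplicative and $*$-preserving, it is a central character, as claimed. There is essentially no serious obstacle here — the only thing to get right is the universality argument for the existence of the homomorphism (checking the orthogonality relations are respected by $-I_N$) and the sign bookkeeping in the polynomial identity $P_n(-x) = (-1)^nP_n(x)$; the result that $\chi_n$ is a polynomial in $\chi_1$ is already recalled in the text preceding the statement. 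Alternatively, one can avoid polynomials altogether and argue inductively on the level of corepresentations: $\varepsilon_{\mathrm{alt}}$ applied to $u^n$ gives a scalar $s_n$ with $s_1 = -1$ (acting on a $1$-dimensional — wait, $d_n$-dimensional — space as $(-1)^n$ times the identity), using $u^1\otimes u^n = u^{n-1}\oplus u^{n+1}$ and $\varepsilon_{\mathrm{alt}}(\chi_1\chi_n) = \varepsilon_{\mathrm{alt}}(\chi_{n-1}) + \varepsilon_{\mathrm{alt}}(\chi_{n+1})$.
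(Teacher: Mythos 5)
Your proposal is correct and follows essentially the same route as the paper: the matrix $-I_N$ satisfies the defining relations, so universality yields a $*$-character with $\varepsilon_{\mathrm{alt}}(u_{ij})=-\delta_{ij}$, whose values on the $\chi_n$ are $(-1)^n d_n$, and whose centrality is checked inductively. One caveat: centrality for $n\geqslant 2$ does not follow merely from the diagonal form on $u^1$ together with the character values, so your main argument should rely on the inductive corepresentation step you sketch at the end (e.g.\ $(\mathrm{id}\otimes\varepsilon_{\mathrm{alt}})(u^{\otimes n})=(-1)^n I$, hence $(-1)^n I_{d_n}$ on each irreducible subrepresentation), which is exactly what the paper's ``easy to check (inductively)'' refers to.
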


\begin{proof}
It suffices to note that as the diagonal matrix $-\mathrm{Id}\in M_{N}(\C)$ satisfies the defining relations of $\Pol(O_{N}^{+})$, there exists by universality a (unique) $*$-homomorphism $\varepsilon_{alt}:\Pol(O_{N}^{+})\to \C$ such that for all $1\leqslant i, j\leqslant N$, 
\begin{equation*}
\varepsilon_{alt}(u_{ij}) = -\delta_{ij}.
\end{equation*}
It is then easy to check (inductively) that this character is central.
\end{proof}

Before turning to the main result of this section, let us note a consequence of the conditions introduced in Proposition \ref{prop:recursionorthogonal}.

\begin{lem}\label{lem:linearcombination}
Every central functional on $\Pol(O_{N}^{+})$ satisfying the relations of Proposition \ref{prop:recursionorthogonal} (independently of the value taken at $1$) is tracial, and bounded with respect to the universal norm.
\end{lem}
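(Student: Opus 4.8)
The plan is to exploit the structure revealed by Proposition \ref{prop:recursionorthogonal}: the space of central functionals satisfying those recursions is at most two-dimensional, spanned (as functions of $(\phi_1,\phi_2)$) by two fixed central functionals. So first I would identify an explicit spanning set. The counit $\varepsilon$ is central, tracial, multiplicative (hence bounded) and satisfies $\varepsilon_n = d_n$; the alternating character $\varepsilon_{\mathrm{alt}}$ of Proposition \ref{prop:altcounit} is central, a $*$-homomorphism to $\C$ (hence tracial and bounded of norm $1$) with $(\varepsilon_{\mathrm{alt}})_n = (-1)^n d_n$. I claim these two \emph{also} satisfy the recursions of Proposition \ref{prop:recursionorthogonal} — indeed that proposition was derived from traciality + centrality, and both $\varepsilon$ and $\varepsilon_{\mathrm{alt}}$ are tracial and central, so they must satisfy $(\varepsilon)_n = a_n (\varepsilon)_1$, $(\varepsilon)_n=b_n(\varepsilon)_2$ for $n$ odd/even, and likewise for $\varepsilon_{\mathrm{alt}}$. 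Concretely $a_n \cdot 1 = d_n$ so $a_n = d_n$ for $n$ odd, and $b_n = d_n/d_2$ for $n$ even; consistency with $\varepsilon_{\mathrm{alt}}$ forces $a_n d_1 = -d_n$ for... wait, $(\varepsilon_{\mathrm{alt}})_1 = -d_1 = -N$ and $(\varepsilon_{\mathrm{alt}})_n=(-1)^nd_n$, so for $n$ odd $a_n(-N) = -d_n$, i.e. $a_n = d_n/N = d_n/d_1$, matching; for $n$ even $b_n d_2 = d_n$, same as before. Good — so the constraints are consistent and both distinguished functionals lie in the solution space.

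Next I would argue \emph{dimension $\le 2$ and surjectivity onto the $(\phi_1,\phi_2)$-plane}. By Proposition \ref{prop:recursionorthogonal}, any central $\phi$ satisfying the recursions is completely determined by the pair $(\phi_1,\phi_2)\in\C^2$, via $\phi_n = a_n\phi_1$ ($n$ odd) and $\phi_n = b_n\phi_2$ ($n$ even), together with $\phi_0 = \phi(\chi_0) = \phi(1)$ which is a free parameter — but the hypothesis says ``independently of the value taken at $1$'', so I should treat $\phi(1)$ as an independent third coordinate; the upshot is that the solution space is at most three-dimensional, spanned by $\varepsilon$, $\varepsilon_{\mathrm{alt}}$, and the functional $\delta_0$ given by $\delta_0(\chi_0)=1$, $\delta_0(\chi_n)=0$ for $n\ge 1$ (the latter being, up to the normalisation at $0$, the Haar state $h$, which is central, tracial and bounded). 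Provided $\varepsilon, \varepsilon_{\mathrm{alt}}, h$ are linearly independent (clear: the vectors $(d_1,d_2,0)$, $(-d_1,d_2,0)$, $(0,0,1)$ in the $(\phi_1,\phi_2,\phi_0)$-coordinates are independent since $N = d_1 \ne 0$), they span the whole solution space. Hence every central $\phi$ satisfying the recursions is a linear combination $\lambda\varepsilon + \mu\varepsilon_{\mathrm{alt}} + \nu h$ of tracial, universally-bounded functionals, so it is itself tracial and bounded. That completes the argument.

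The step I expect to be the main obstacle is the bookkeeping needed to be certain the solution space is exactly the span of $\{\varepsilon,\varepsilon_{\mathrm{alt}},h\}$ and not something larger: I must confirm that Proposition \ref{prop:recursionorthogonal}'s recursions, \emph{plus} centrality, genuinely pin down $\phi$ from $(\phi_0,\phi_1,\phi_2)$ with no further freedom, and that the three-parameter family I wrote down really solves the recursions for \emph{all} values of the parameters — equivalently, that the sequences $(a_n),(b_n)$ are forced to have exactly the values dictated by $\varepsilon$ (which the consistency check above confirms). A cleaner route that sidesteps potential subtleties: observe that for any central $\phi$ in the solution space, the functional $\phi' := \phi - \phi(1)h - \tfrac12(\phi_1/N + \text{(something)})(\cdots)$ — more simply, choose scalars $\lambda,\mu,\nu$ so that $\psi := \phi - \lambda\varepsilon - \mu\varepsilon_{\mathrm{alt}} - \nu h$ has $\psi(\chi_0)=\psi(\chi_1)=\psi(\chi_2)=0$; then by Proposition \ref{prop:recursionorthogonal} $\psi(\chi_n)=0$ for all $n$, hence $\psi=0$ on $\Pol_c(O_N^+)$, and since $\psi$ is central it vanishes on all of $\Pol(O_N^+)$. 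Therefore $\phi = \lambda\varepsilon+\mu\varepsilon_{\mathrm{alt}}+\nu h$, which is manifestly tracial and bounded with respect to the universal norm, being a linear combination of such functionals.
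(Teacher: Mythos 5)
Your argument is correct and takes essentially the same route as the paper: one writes $\phi$ as a linear combination of $\varepsilon$, $\varepsilon_{\mathrm{alt}}$ and $h$ by matching the values at $\chi_{0},\chi_{1},\chi_{2}$ (the paper does this by successive subtraction, you by solving a $3\times 3$ system), and then uses the recursions of Proposition \ref{prop:recursionorthogonal} together with centrality to conclude that the difference vanishes identically, so $\phi$ inherits traciality and universal boundedness. The only slip is in your coordinate vectors: $\varepsilon$ and $\varepsilon_{\mathrm{alt}}$ take the value $1$, not $0$, at $\chi_{0}$, but the corrected vectors $(d_{1},d_{2},1)$, $(-d_{1},d_{2},1)$, $(0,0,1)$ are still linearly independent, so nothing in the argument is affected.
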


\begin{proof}
It suffices to observe that such a functional $\phi$ can be decomposed as a linear combination of $\varepsilon$, $\varepsilon_{alt}$ and $h$. More precisely,
\begin{equation*}
\phi' = \phi + \frac{\phi_{1}}{d_{1}}\varepsilon_{\mathrm{alt}}
\end{equation*}
is a  tracial central functional vanishing on $\chi_{1}$, hence on $\chi_{2n+1}$ for all $n\in \bn$ by Proposition \ref{prop:recursionorthogonal}. We now need to kill also the even characters, and this can be done using the map $\varepsilon' = (\varepsilon + \varepsilon_{\text{alt}})/2$, which is a central functional vanishing on all odd characters. Setting
\begin{equation*}
\phi'' = \phi' - \frac{\phi'_{2}}{d_{2}}\varepsilon',
\end{equation*}
we have that $\phi''(\chi_{n}) = 0$ for all $n \in \bn$. Since $h$ vanishes on all characters except for $\chi_{0} = 1$, we conclude that $\phi'' - \phi''(1)h = 0$. Looking back at  the construction, we conclude that
\begin{align*}
\phi & = \phi' - \frac{\phi_{1}}{d_{1}}\varepsilon_{\mathrm{alt}}  = \phi'' + \frac{\phi'_{2}}{2d_{2}}\varepsilon + \frac{\phi'_{2}}{2d_{2}}\varepsilon_{\mathrm{alt}} - \frac{\phi_{1}}{d_{1}}\varepsilon_{\mathrm{alt}} \\
& = \phi''(1)h + \frac{\phi'_{2}}{2d_{2}}\varepsilon + \left(\frac{\phi'_{2}}{2d_{2}} - \frac{\phi_{1}}{d_{1}}\right)\varepsilon_{\mathrm{alt}}.
\end{align*}
\end{proof}

\begin{rmk}\label{rem:unbd}
Let us emphasize the fact that automatic boundedness of tracial central functional is specific to $O_{N}^{+}$ and very different from the classical case. For instance, if $G$ is an infinite compact group and $(\rho_{n})_{n\in \bn}$ a family of pair-wise non-equivalent irreducible representations (for example $G=\mathbb{T}$), then the functional sending $\chi_{\rho_{n}}$ to $n\dim(\rho_{n})$ is central on $\Pol(G)$ and tracial because the algebra is commutative. However, it does not extend to a bounded functional on $C(G)$.
\end{rmk}

We now have everything at hand to describe $\mathrm{TCS}(O_{N}^{+})$. We already have three extreme points -- namely $\varepsilon, \varepsilon_{alt}$ and $h$ -- and the next theorem tells us that there is nothing more.

\begin{thm}\label{thm:ON+}
For $N\geqslant 3$, any tracial central state on $O_{N}^{+}$ is a unique convex combination of $\varepsilon, \varepsilon_{alt}$ and $h$.
\end{thm}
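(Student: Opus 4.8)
The plan is to combine Proposition \ref{prop:recursionorthogonal} with Lemma \ref{lem:linearcombination} to reduce the problem to a finite-dimensional positivity computation. By Lemma \ref{lem:linearcombination}, any central functional $\phi$ satisfying the recursion of Proposition \ref{prop:recursionorthogonal} can be written explicitly as $\phi = c_h h + c_\varepsilon \varepsilon + c_{\mathrm{alt}} \varepsilon_{\mathrm{alt}}$, and the proof of that lemma exhibits the coefficients in terms of $\phi_1$ and $\phi_2'$. So the only thing left to verify is that if in addition $\phi$ is a \emph{state}, i.e.\ positive and normalised, then all three coefficients $c_h, c_\varepsilon, c_{\mathrm{alt}}$ are non-negative; the normalisation $\phi(1)=1$ will then force $c_h+c_\varepsilon+c_{\mathrm{alt}}=1$, giving the asserted convex combination, and uniqueness follows because $h,\varepsilon,\varepsilon_{\mathrm{alt}}$ take distinct values on $(\chi_1,\chi_2)$ — indeed $\varepsilon(\chi_1)=N$, $\varepsilon_{\mathrm{alt}}(\chi_1)=-N$, $h(\chi_1)=0$, while $\varepsilon(\chi_2)=\varepsilon_{\mathrm{alt}}(\chi_2)=d_2=N^2-1$ and $h(\chi_2)=0$, so the linear map $\phi\mapsto(\phi_1,\phi_2)$ is injective on $\mathrm{span}\{h,\varepsilon,\varepsilon_{\mathrm{alt}}\}$.

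First I would record that a tracial central state is in particular a central functional satisfying the recursion, so it lies in $V:=\mathrm{span}\{h,\varepsilon,\varepsilon_{\mathrm{alt}}\}$, a three-dimensional space, and the state condition cuts out a compact convex subset of $V$. Working in the coordinates $(c_h,c_\varepsilon,c_{\mathrm{alt}})$ one sees immediately that $\varepsilon,\varepsilon_{\mathrm{alt}},h$ are themselves states (the first two are $*$-characters, the third is the Haar state), so $\mathrm{conv}\{\varepsilon,\varepsilon_{\mathrm{alt}},h\}\subseteq \mathrm{TCS}(O_N^+)$; the content of the theorem is the reverse inclusion. For that I would argue by positivity: evaluate a given TCS $\phi = c_h h + c_\varepsilon \varepsilon + c_{\mathrm{alt}} \varepsilon_{\mathrm{alt}}$ on a well-chosen sequence of positive elements of $\Pol(O_N^+)$ and extract sign constraints on the coefficients. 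Natural test elements are the spectral projections (or, more elementarily, polynomials in) the self-adjoint character $\chi_1 = \chi_u$: since $\varepsilon$ corresponds to evaluating $\chi_1\mapsto N$, $\varepsilon_{\mathrm{alt}}$ to $\chi_1\mapsto -N$ and $h$ to the Haar/semicircular distribution of $\chi_1$ on $[-2,2]$, the functional $\phi$ restricted to the commutative algebra $\mathrm{C}^*(\chi_1)$ is the measure $c_h\mu_{sc} + c_\varepsilon\delta_N + c_{\mathrm{alt}}\delta_{-N}$ on $[-N,N]$, and a measure is positive iff each of these weights is $\geq 0$ because the three pieces are mutually singular (the semicircle is supported in $[-2,2]$, disjoint from $\{\pm N\}$ as $N\geq 3$). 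This gives $c_h,c_\varepsilon,c_{\mathrm{alt}}\geq 0$ at once.

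The main obstacle, and the point that needs care rather than a slogan, is justifying that the restriction of a TCS to the abelian $C^*$-subalgebra generated by $\chi_1$ really is the measure claimed — i.e.\ identifying the distribution of $\chi_1$ under each of $h$, $\varepsilon$, $\varepsilon_{\mathrm{alt}}$ and checking the supports are as stated. The distribution of $\chi_1$ under $h$ is the standard semicircle law on $[-2,2]$ (this is a classical fact about $O_N^+$, going back to Banica, which I may cite or re-derive from the Weingarten/fusion description), $\varepsilon$ and $\varepsilon_{\mathrm{alt}}$ are $*$-homomorphisms so send $\chi_1$ to the scalars $N$ and $-N$ respectively, and since $\|\chi_1\|\le N$ in $C^u(O_N^+)$ these point masses sit at the edge of the admissible interval; disjointness of $\{2\}<\{N\}$ and $\{-N\}<\{-2\}$ uses precisely $N\geq 3$. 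Once the three components are seen to be mutually singular, positivity of $\phi|_{C^*(\chi_1)}$ is equivalent to positivity of each coefficient by the usual argument (test against continuous functions supported near each atom and on $[-2,2]$), and the proof concludes as above. I would then remark that for $N=2$ the statement genuinely changes (as $O_2^+$ is essentially $SU_{-1}(2)$ and the relevant supports collapse), consistent with the hypothesis $N\geq 3$.
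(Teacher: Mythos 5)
Your argument is correct, but the positivity step is genuinely different from the one in the paper. The paper, after writing $\phi = (1-\tfrac{\phi_2}{d_2})h + \tfrac12(\tfrac{\phi_1}{d_1}+\tfrac{\phi_2}{d_2})\varepsilon + \tfrac12(\tfrac{\phi_2}{d_2}-\tfrac{\phi_1}{d_1})\varepsilon_{\mathrm{alt}}$, proves nonnegativity of the coefficients purely algebraically: the bound $|\phi_2|\leqslant \|\chi_2\|=d_2$ gives the first one, positivity of $\phi(\chi_n^2)$ together with divergence of $\sum_k d_{2k}$ gives $\mu:=\phi_2/d_2\geqslant 0$, and a Cauchy--Schwarz estimate on $\phi(\chi_{n+1}\chi_n)$ with $n\to\infty$ gives $|\phi_1/d_1|\leqslant\mu$, so only the fusion rules and the universal norm bound on characters are used. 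You instead extend $\phi$ (legitimately, via the state correspondence, or via Lemma \ref{lem:linearcombination}) to $C^u(O_N^+)$, restrict to the abelian C*-subalgebra generated by $\chi_1$, and identify the resulting probability measure as $c_h\mu_{sc}+c_\varepsilon\delta_N+c_{\mathrm{alt}}\delta_{-N}$; mutual singularity of the semicircle law on $[-2,2]$ and the atoms at $\pm N$ then forces $c_h,c_\varepsilon,c_{\mathrm{alt}}\geqslant 0$. This is a clean and conceptually transparent route, but it imports an external ingredient (Banica's semicircularity of $\chi_1$ under $h$, or equivalently the moment computation via $\dim\mathrm{Hom}(1,u^{\otimes n})=|NC_2(n)|$) together with a moment-determinacy argument on a compact interval, whereas the paper's proof is self-contained given Proposition \ref{prop:recursionorthogonal} and Lemma \ref{lem:linearcombination}. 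One small slip: the map $\phi\mapsto(\phi_1,\phi_2)$ cannot be injective on the three-dimensional space $\mathrm{span}\{h,\varepsilon,\varepsilon_{\mathrm{alt}}\}$; uniqueness should be phrased as in the paper, namely that the coefficients are recovered from the values on $\chi_0,\chi_1,\chi_2$ (in your normalisation, from $\phi(1)=1$ together with $(\phi_1,\phi_2)$), which is exactly what your computation of the values of $h,\varepsilon,\varepsilon_{\mathrm{alt}}$ on these characters shows. Also, your closing remark about $N=2$ is slightly off target: the singularity argument would still work there (the semicircle law is atomless), and the real obstruction is that Proposition \ref{prop:recursionorthogonal} needs three distinct indices, hence $N\geqslant 3$.
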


\begin{proof}
In view of Lemma \ref{lem:linearcombination}, all we have to do is to prove that if $\phi$ is a state, then the coefficients appearing in the proof of Lemma \ref{lem:linearcombination} are positive and add up to $1$. First, let us write them explicitly:
\begin{equation*}
\phi = \left(1 - \frac{\phi_{2}}{d_{2}}\right)h + \frac{1}{2}\left(\frac{\phi_{1}}{d_{1}} + \frac{\phi_{2}}{d_{2}}\right)\varepsilon + \frac{1}{2}\left(\frac{\phi_{2}}{d_{2}} - \frac{\phi_{1}}{d_{1}}\right)\varepsilon_{\mathrm{alt}}.
\end{equation*}
It is clear that the sum is one, so that only positivity remains to be proven. For the first coefficient, this comes from the fact that for each $n \in \bn$ the norm of $\chi_{n}$ in $C^{u}(O_{N}^{+})$ is $d_{n}$, so that by the fact that states have norm $1$,
\begin{equation*}
\vert\phi_{2}\vert = \vert\phi(\chi_{2})\vert\leqslant \|\chi_{2}\| = d_{2}.
\end{equation*}

For the other two, let us set $\lambda = \phi_{1}/d_{1}$ and $\mu = \phi_{2}/d_{2}$ and observe that $\phi_{2n} = \mu d_{2n}$ for all $n\in \bn$. As a consequence, we have 
\begin{equation*}
0\leqslant \phi(\chi_{n}^{2}) = \sum_{k=0}^{n}\phi(\chi_{2k}) = 1 + \mu\left(\sum_{k=1}^{n}d_{2k}\right).
\end{equation*}
Since the sum in parenthesis diverges to $+\infty$, we must have $\mu \geqslant 0$. As for $\lambda$, consider the equality (again valid for all $n \in \bn$)
\begin{equation*}
\chi_{n+1}\chi_{n} = \sum_{k=0}^{n}\chi_{2k+1}.
\end{equation*}
Together with the fact that $\phi_{2n+1} = \lambda d_{2n+1}$, we get
\begin{equation*}
\phi(\chi_{n+1}\chi_{n}) = \lambda\left(\sum_{k=0}^{n}d_{2k+1}\right) = \lambda d_{n+1}d_{n}.
\end{equation*}
Next, we slightly reformulate our previous equality
\begin{equation*}
\phi(\chi_{n}^2) = 1 +\mu\left(\sum_{k=1}^{n}d_{2k}\right) = (1-\mu) + \mu\left(\sum_{k=0}^{n}d_{2k}\right) = (1-\mu) + \mu d_{n}^{2}.
\end{equation*}
We can now use the Cauchy-Schwarz inequality to obtain
\begin{align*}
\vert \lambda\vert^{2} & = \frac{1}{(d_{n+1}d_{n})^{2}}\vert\phi(\chi_{n+1}\chi_{n})\vert^{2}  \leqslant \frac{1}{(d_{n+1}d_{n})^{2}}\phi(\chi_{n}^{2})\phi(\chi_{n+1}^{2}) \\
& = \frac{1}{(d_{n+1}d_{n})^{2}}((1-\mu) + \mu d_{n}^{2})((1-\mu) + \mu d_{n+1}^{2}) 
= \left(\mu + \frac{1-\mu}{d_{n}^{2}}\right)\left(\mu + \frac{1-\mu}{d_{n+1}^{2}}\right).
\end{align*}
Letting $n$ go to infinity eventually yields $\vert\lambda\vert^{2}\leqslant \mu^{2}$, from which the positivity of $\mu + \lambda$ and $\mu - \lambda$ follows. The uniqueness of the decomposition is clear since the coefficients can be recovered by evaluating $\phi$ on the first three irreducible characters.
\end{proof}


\section{Quantum permutations}\label{sec:permutation}

The next family of examples which we will now study is the family of quantum permutation groups $S_{N}^{+}$, for $N\geqslant 6$. These are compact quantum groups introduced by Sh.\,Wang in \cite{Wan2} in the following way : $\Pol(S_{N}^{+})$ is the universal $*$-algebra generated by $N^{2}$ elements $(p_{ij})_{1\leqslant i, j\leqslant N}$ such that for all $1\leqslant i, j, l\leqslant N$,
\begin{itemize}
\item $p_{ij} = p_{ij}^{*}$;
\item $p_{ij}p_{il} = \delta_{jl}p_{ij}$ and $p_{ij}p_{lj} = \delta_{il}p_{ij}$;
\item $\displaystyle\sum_{i=1}^{k}p_{kj} = 1 = \displaystyle\sum_{j=1}^{k}p_{ik}$.
\end{itemize}
It can be endowed with a compact quantum group structure through the unique $*$-homomorphism $\Delta : \Pol(S_{N}^{+})\to \Pol(S_{N}^{+})\otimes \Pol(S_{N}^{+})$ such that
\begin{equation*}
\Delta(p_{ij}) = \sum_{k=1}^{N}p_{ik}\otimes p_{kj}, \;\;\; i,j =1, \ldots,N.
\end{equation*}

The quantum group $S_{N}^{+}$ is also of Kac type, but
let us now explain why this is different from the case of $O_{N}^{+}$ studied previously. According to \cite{Ban2}, the irreducible representations of $S_{N}^{+}$ can be indexed by non-negative integers in such a way that $u^{0}$ is the trivial representation, $p = (p_{ij})_{i,j=1}^N = u^{0}\oplus u^{1}$ is the fundamental representation and for any $n \in \bn$,
\begin{equation*}
u^{1}\otimes u^{n} = u^{n-1}\oplus u^{n}\oplus u^{n+1}.
\end{equation*}
We see right away that there is no parity preserving structure in the fusion rules, so that we cannot proceed as in Section \ref{sec:orthogonal}. Nevertheless, with extra work we will be able to prove that any tracial central state on $S_{N}^{+}$ is determined by its values on the characters $\chi_{1}$ and $\chi_{2}$. We will then use convolution semi-groups to show that in fact, $\mathrm{TCS}(S_{N}^{+})$ has only two extreme points: the Haar state and the counit.

\subsection{Reduction to two parameters}

Let us fix from now on a tracial central functional $\phi : \Pol(S_{N}^{+})\to \C$. Our first goal is to prove that $\phi$ is determined by its values on $\chi_{1}$ and $\chi_{2}$. Even though the basic strategy will be to use traciality as for $O_{N}^{+}$, the computations are subtler because the Weingarten formula (established in \cite{BB2}) is more involved for $S_{N}^{+}$ and difficult to use effectively. We will therefore have to go round this problem, and this starts with the following simple computation. From now on, we will write $p_{i} = p_{ii}$ to lighten the notation.

\begin{lem}\label{lem:nonvanishing}
Assume $N\geqslant 6$. Then, there exists three distinct indices $1\leqslant i, j, l\leqslant N$ such that for any $k\in \bn_0$,
\begin{align*}
h((p_{i}p_{j})^{k}p_{i}\chi_{2k+1}) & \neq 0 \\
h((p_{i}p_{j})^{k}p_{l}p_{i}\chi_{2k+2}) \neq 0
\end{align*}
\end{lem}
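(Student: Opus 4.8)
The plan is to reduce both identities to the evaluation of the Haar state on products of the diagonal generators $p_a:=p_{aa}$, and then to extract the non-vanishing from the Weingarten formula for $S_N^+$ of \cite{BB2}, arguing in the spirit of the proof of Proposition~\ref{prop:recursionorthogonal} but with the extra care forced by the more complicated partition combinatorics of $S_N^+$.

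First I would use the fusion rules of \cite{Ban2}: from $\chi_1\chi_n=\chi_{n-1}+\chi_n+\chi_{n+1}$ one sees that each $\chi_m$ is a monic polynomial of degree $m$ in $\chi_1$ with coefficients independent of $N$, while $\chi_1=\sum_{a=1}^N p_a-1$. Substituting and expanding, $h\big((p_ip_j)^kp_i\chi_{2k+1}\big)$ becomes an explicit finite linear combination of the numbers $\sum_{a_1,\dots,a_s}h\big((p_ip_j)^kp_i\,p_{a_1}\cdots p_{a_s}\big)$, $0\leqslant s\leqslant 2k+1$, the top one (from $\chi_1^{2k+1}$) being the case $s=2k+1$; likewise for $h\big((p_ip_j)^kp_lp_i\chi_{2k+2}\big)$. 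It is convenient to also keep in mind the equivalent description coming from the Woronowicz orthogonality relations: since $S_N^+$ is of Kac type, $h\big((p_ip_j)^kp_i\chi_{2k+1}\big)$ is $d_{2k+1}^{-1}$ times the trace of the coefficient matrix of the component of the word $(p_ip_j)^kp_i$ in the (unique) copy of $u^{2k+1}$ inside $p^{\otimes(2k+1)}$; in particular the quantity is real, and the question is whether that top component is non-trivial.

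Next I would run the Weingarten analysis. As the $p_a$ are self-adjoint and diagonal, for any word $p_{v_1}\cdots p_{v_r}$ the same multi-index $v$ occurs on the row and column sides, so the $S_N^+$ Weingarten formula reads $h\big(p_{v_1}\cdots p_{v_r}\big)=\sum W_N(\pi,\sigma)$, the sum running over pairs $\pi,\sigma\in NC(r)$ both refined by $\ker v$. The structural point — the analogue of the observation that every non-crossing pairing has an interval, used for $O_N^+$ — is that, since $i\neq j$ (and $i,j,l$ are pairwise distinct in the second identity), $\ker v$ forbids any block of $\pi$ or $\sigma$ to contain both an $i$-position and a $j$-position of the alternating segment $(i,j,\dots,i)$; combined with non-crossingness this confines the contributing pairs $(\pi,\sigma)$ to a restricted family, and summing over the free indices $a_1,\dots,a_s$ collapses the expression to a sum of $W_N$-entries over that family weighted by explicit powers of $N$. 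Performing this computation, together with the variant carrying the extra factor $p_lp_i$, yields exactly the combinatorial identities collected in the Appendix; one then checks that the alternating combination produced by the polynomials expressing $\chi_{2k+1}$ and $\chi_{2k+2}$ in terms of $\chi_1$ does not collapse to $0$ — either by rewriting the outcome, up to a strictly positive constant, as $h(z^*z)$ for an explicit non-zero $z\in\Pol(S_N^+)$ and invoking faithfulness of $h$, or by isolating a dominant term.

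The main obstacle is precisely this Weingarten bookkeeping. In contrast with $O_N^+$, where the alternating pattern literally leaves a single matching pairing and faithfulness of $h$ closes the argument in one line, here nested blocks over the $j$-positions (and over the $l$-position) survive, so a whole family of non-crossing partitions contributes, and the real difficulty is to show that their signed contributions, after the $\chi_1$-expansion, do not add up to zero. This is also where the hypothesis $N\geqslant 6$ enters: it ensures both that three distinct indices $i,j,l$ are available and that the relevant powers of $N$ satisfy the strict inequalities preventing cancellation, playing the role that $N\geqslant 3$ played in Proposition~\ref{prop:recursionorthogonal}.
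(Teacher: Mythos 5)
There is a genuine gap: the non-vanishing itself is never proved. You correctly reduce the question to whether the component of the word $(p_{i}p_{j})^{k}p_{i}$ (resp.\ $(p_{i}p_{j})^{k}p_{l}p_{i}$) in the top irreducible $u^{2k+1}$ (resp.\ $u^{2k+2}$) inside $p^{\otimes n}$ is non-trivial, but then you switch to an unexecuted Weingarten expansion and end by conceding that ``the real difficulty is to show that their signed contributions \dots do not add up to zero'' --- which is precisely the content of the lemma. The two ways you suggest to close this (rewrite as $h(z^{*}z)$ with $z\neq 0$, or isolate a dominant term) are not carried out, and the first one merely restates the problem: writing the quantity as $\|P_{n}\xi\|^{2}/d_{n}$ with $\xi=(e_{i}\otimes e_{j})^{\otimes k}\otimes e_{i}$ makes positivity obvious, but one still has to show $P_{n}\xi\neq 0$, and no argument for that is given. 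Your claim that the computation ``yields exactly the combinatorial identities collected in the Appendix'' is also off the mark: the Appendix treats a fixed low-order moment (the coefficient $b_{3}$), not the general-$k$ quantities of this lemma, and cannot substitute for the missing step.

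For comparison, the paper's proof completes exactly the step you leave open, and by a different, non-Weingarten device: by the orthogonality relations (Kac type), $h(u^{n}_{\xi,\xi}\chi_{n})=\|P_{n}\xi\|^{2}/\dim(u^{n})$, so it suffices to exhibit one vector of the carrier space $H_{n}$ of $u^{n}$ not orthogonal to $\xi$. Using the explicit description of $H_{n}$ from \cite{FTW} (it is the orthocomplement of the range of a concrete intertwiner), the vector $\zeta=\left((e_{2}-e_{1})\otimes(e_{4}-e_{3})\right)^{\otimes k}\otimes(e_{2}-e_{1})$ lies in $H_{2k+1}$ and pairs to $1$ with $\xi$ for $i=2$, $j=4$; the even case uses an analogous vector built from $e_{1},\dots,e_{6}$, and this is where the hypothesis $N\geqslant 6$ actually enters (not, as you suggest, via ``strict inequalities preventing cancellation'' in a Weingarten sum). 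Without either this explicit-vector argument or a genuinely executed combinatorial non-cancellation estimate, your proposal does not establish the lemma.
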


\begin{proof}
We start with the first case. Fix $k \in \bn_0$ and set $n = 2k+1$. Let $(e_{i})_{1\leqslant i\leqslant N}$ be the canonical basis of $\C^{N}$. Then, $(p_{i}p_{j})^{k}p_{i}$ is the coefficient of $u^{\otimes n}$ corresponding to the vector $\xi:=(e_{i}\otimes e_{j})^{\otimes k}\otimes e_{i}$, in the sense that
\begin{equation*}
(p_{i}p_{j})^{k}p_{i} = u^{\otimes n}_{\xi, \xi}:= (\omega_\xi \otimes \textup{id}_{\Pol(S_N^+)})(u^{\otimes n}),
\end{equation*}
and $\omega_\xi(T)=\langle \xi, T \xi \rangle$ for $T \in M_{N}^{\otimes n}$. Then $(p_{i}p_{j})^{k}p_{i}$ can be decomposed as a sum of coefficients of irreducible representations, and the corresponding coefficient of $u^{n}$ is given by the vector $P_{n}((e_{i}\otimes e_{j})^{\otimes k}\otimes e_{i})$, where $P_{n}$ denotes the orthogonal projection onto the subspace $H_{n}$ of $(\C^{N})^{\otimes n}$ corresponding to $u^{n}$. Now, observe that if $(v_{i})_{1\leqslant i\leqslant \dim(u^{n})}$ is an orthonormal basis of $H_{n}$, then for any $\xi, \eta\in H_{n}$ we have
\begin{align*}
h(u^{n}_{\xi, \eta}\chi_{n}) & = \sum_{i=1}^{\dim(u^{n})}h(u^{n}_{\xi, \eta}u^{n}_{v_{i}, v_{i}})  = \sum_{i=1}^{n}\frac{\langle \xi, v_{i}\rangle \overline{\langle \eta, v_{i}\rangle}}{\dim(u^{n})} = \frac{\langle\xi, \eta\rangle}{\dim(u^{n})}.
\end{align*}
As a consequence, we have
\begin{equation*}
h((p_{i}p_{j})^{k}p_{i}\chi_{n}) = \frac{\|P_{n}((e_{i}\otimes e_{j})^{\otimes k}\otimes e_{i})\|^{2}}{\dim(u^{n})}
\end{equation*}
and to prove that this is non-zero, it is enough to find one vector in $H_{n}$ which is not orthogonal to $(e_{i}\otimes e_{j})^{\otimes k}\otimes e_{i}$. To do this, recall that by \cite[Lem 6.2]{FTW}, for $N\geqslant 4$
\begin{equation*}
\zeta = \left((e_{2} - e_{1})\otimes (e_{4} - e_{3})\right)^{\otimes k}\otimes (e_{2} - e_{1}) \in H_{n}
\end{equation*}
because it is orthogonal to the range of $R_{\vert^{\odot n}}$ (we are using here the notations of \cite{FTW}), which is the orthogonal projection onto $H_{n}^{\perp}$. Taking $i = 2$ and $j = 4$, we get $\langle \zeta, (e_{i}\otimes e_{j})^{\otimes k}\otimes e_{i}\rangle = 1$ and the conclusion follows.

As for the second case, the reasoning is exactly the same, except that this time one needs to use the vector
\begin{equation*}
\zeta = (e_{2} - e_{1})\otimes (e_{3} - e_{2}))^{\otimes k}\otimes (e_{6} - e_{5})\otimes (e_{2} - e_{1}) \in H_{2k+2},
\end{equation*}
which makes sense as soon as $N\geqslant 6$. Taking $l = 6$ then yields the result.
\end{proof}

We can now simplify the description of $\mathrm{TCS}(S_{N}^{+})$. Recall that we write $\phi_{n}$ for $\phi(\chi_{n})$, $n \in \bn_0$.

\begin{prop}\label{prop:recursion}
Assume $N \geqslant 6$. For any integer $n\geqslant 3$, there exist $a_{n}, b_{n}\in \br$ such that for any tracial central functional $\phi$ on $S_{N}^{+}$, and any $n \in \bn$ we have
\begin{equation*}
\phi_{n} = a_{n}\phi_{1} + b_{n}\phi_{2}.
\end{equation*}
In particular, any tracial central functional is determined by its values on the first two non-trivial characters.
\end{prop}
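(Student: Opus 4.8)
The plan is to mimic the inductive strategy of Proposition~\ref{prop:recursionorthogonal}, but with the extra care forced by the more complicated fusion rules $u^1\otimes u^n = u^{n-1}\oplus u^n\oplus u^{n+1}$ of $S_N^+$. As in the orthogonal case, the starting point is to use centrality together with the existence of the central conditional expectation: for any $a,b\in\Pol(S_N^+)$ we have $\phi(ab)=\phi(\E(ab))=\sum_{m\geqslant 0}h(ab\,\chi_m)\phi_m$, and by traciality $\phi(ab)=\phi(ba)$, so that $\sum_m \bigl(h(ab\,\chi_m)-h(ba\,\chi_m)\bigr)\phi_m=0$, a finite sum. The idea is to choose cleverly the elements $a$ and $b$ (products of diagonal generators $p_i=p_{ii}$) so that the resulting linear relation among the $\phi_m$ expresses $\phi_{n}$ in terms of lower-index $\phi_m$ with a non-zero leading coefficient, and then close the induction.

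Concretely, I would proceed by strong induction on $n$, the base cases $n\leqslant 2$ being vacuous (set $a_1=1$, $b_1=0$, $a_2=0$, $b_2=1$) and the case $n=3$ handled first by hand. Using Lemma~\ref{lem:nonvanishing}, fix three distinct indices $i,j,l$ with $h((p_ip_j)^kp_i\chi_{2k+1})\neq 0$ and $h((p_ip_j)^kp_lp_i\chi_{2k+2})\neq 0$ for all $k$. For the step producing $\phi_n$, I would take, for $n=2k+1$ odd, $A=(p_ip_j)^k$ and $B=p_i$ (or a suitable small correction such as $B=p_lp_i$ to reach the right degree), and for $n$ even the analogous pair with one extra $p_l$. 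The key point is that $\chi_m$ is a polynomial of degree $m$ in $\chi_1$ with $\chi_m = \chi_1\chi_{m-1}-\chi_{m-1}-\chi_{m-2}$, so $h(ab\,\chi_m)$ can be rewritten in terms of the moments $h(ab\,u_{k_1}\cdots u_{k_s})$ with $s\leqslant m$; because $A$ and $B$ have a fixed total degree $d$, all moments $h(ab\,\chi_m)$ with $m>d$ vanish, and the highest surviving value $m=d$ picks out a single monomial, while $h(ba\,\chi_m)$ has its own highest surviving index $d'$. Comparing the two sides of the traciality relation, one isolates a coefficient equation of the shape $\bigl(h(AB\chi_n)-h(BA\chi_n)\bigr)\phi_n = \sum_{m<n}(\cdots)\phi_m$; since the right-hand side is, by induction, a linear combination of $\phi_1$ and $\phi_2$ with universal real coefficients, it suffices to show the coefficient of $\phi_n$ on the left is non-zero. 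Here, unlike the orthogonal case, one cannot argue "by interval" that only one pairing matches; instead, one invokes the computation of $h((p_ip_j)^kp_i\chi_{2k+1})$ from Lemma~\ref{lem:nonvanishing} (which gives a non-zero value for $h(AB\chi_n)$) and shows that the $BA$-contribution at index $n$ does not accidentally cancel it --- for instance by checking on $\phi=\varepsilon$, for which all $\phi_m=d_m>0$, that the total coefficient of $\phi_n$ in the relation is non-zero, so the same holds for every tracial central functional.

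The main obstacle I expect is precisely this last non-cancellation point: in the orthogonal case the parity of the fusion rules guaranteed that essentially one non-crossing pairing survived, making the leading coefficient transparently non-zero, whereas for $S_N^+$ there is no parity to exploit and the relevant Gram/Weingarten data is "more involved and difficult to use effectively." The fix is to avoid a head-on Weingarten computation and instead use the representation-theoretic identity $h(u^n_{\xi,\eta}\chi_n)=\langle\xi,\eta\rangle/\dim(u^n)$ (as in the proof of Lemma~\ref{lem:nonvanishing}) to evaluate the leading terms $h(AB\chi_n)$ and $h(BA\chi_n)$ directly as squared norms of projections of explicit vectors onto $H_n$, together with the $\varepsilon$-test to rule out cancellation in the full relation. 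Once the leading coefficient is known to be non-zero, division and the induction hypothesis give $\phi_n=a_n\phi_1+b_n\phi_2$ with $a_n,b_n\in\br$ independent of $\phi$, completing the proof and the promised reduction to two parameters.
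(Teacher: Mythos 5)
Your overall skeleton (induction on $n$, the traciality relation $\sum_m\bigl(h(AB\chi_m)-h(BA\chi_m)\bigr)\phi_m=0$, the test elements $A=(p_ip_j)^k$ with $B=p_i$ or $B=p_lp_i$, and Lemma \ref{lem:nonvanishing} for the leading coefficient) is the same as the paper's, but you miss the one observation that makes the argument work, and what you offer in its place is not a proof. With these choices $BA$ \emph{collapses to a strictly shorter word} because the generators are idempotent: $p_i\,(p_ip_j)^k=(p_ip_j)^k$ and $p_lp_i\,(p_ip_j)^k=p_l\,(p_ip_j)^k$. Hence $BA$ is a coefficient of a strictly smaller tensor power of $u$, so $h(BA\chi_n)=0$ automatically, the coefficient of $\phi_n$ in the relation is exactly $h(AB\chi_n)\neq 0$ by Lemma \ref{lem:nonvanishing}, and the ``non-cancellation'' problem that you single out as the main obstacle never arises. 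By contrast, your two proposed fixes do not close that gap: evaluating the relation at $\phi=\varepsilon$ only yields the identity $\sum_m c_m d_m=0$, which holds automatically because $\varepsilon$ is itself a tracial central functional, and it gives no information about the individual coefficient $c_n=h(AB\chi_n)-h(BA\chi_n)$ (in the proof for $O_N^+$ the $\varepsilon$-test worked only because the relation had already been reduced to two terms, one of which was known to be strictly positive); and ``evaluating $h(AB\chi_n)$ and $h(BA\chi_n)$ directly as squared norms of projections'' would require comparing $\|P_n\xi_{AB}\|$ with $\|P_n\xi_{BA}\|$ for two different vectors, which is exactly the kind of hard Weingarten-type computation the strategy is designed to avoid.

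A secondary omission: the relation you obtain also contains a $\phi_0$-term, and $\phi_0=\phi(1)$ is a free parameter for a general tracial central functional, not covered by your induction hypothesis; you never explain why it disappears. The paper eliminates it by testing the relation on the Haar state, for which $h_0=1$ and $h_m=0$ for all $m\geqslant 1$. With that easy extra step and the degree-collapse observation above, your induction does go through and coincides with the paper's proof.
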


\begin{proof}
We will prove the result by induction. We start with the case $n = 3$ and set $A = p_{i}p_{j}$ and $B = p_{i}$, where $i,j\in \{1, \ldots,N\}$ are given by Lemma \ref{lem:nonvanishing}. Then, for any tracial central functional $\phi$,
\begin{align*}
\phi(AB) & = h(p_{i}p_{j}p_{i}\chi_{0})\phi_{0} + h(p_{i}p_{j}p_{i}\chi_{1})\phi_{1} + h(p_{i}p_{j}p_{i}\chi_{2})\phi_{2} + h(p_{i}p_{j}p_{i}\chi_{3})\phi_{3}
\end{align*}
while using $p_{i}^{2} = p_{i}$ yields
\begin{align*}
\phi(BA) & = h(p_{i}p_{j}\chi_{0})\phi_{0} + h(p_{i}p_{j}\chi_{1})\phi_{1} + h(p_{i}p_{j}\chi_{2})\phi_{2}.
\end{align*}
Since $h(p_{i}p_{j}p_{i}\chi_{3}) \neq 0$ by Lemma \ref{lem:nonvanishing}, we get $\alpha, \beta, \gamma\in \C$ such that
\begin{equation*}
\phi_{3} = \alpha + \beta\phi_{1} + \gamma\phi_{2}.
\end{equation*}
Moreover, the coefficients above are explicitly given by sums of values of the Haar state on monomials. As in the case of $O_{N}^{+}$ mentioned above, the Weingarten formula for $S_{N}^{+}$ implies that these are all real numbers, hence $\alpha, \beta, \gamma\in \br$. Since this must hold for all $\phi$, and in particular for the Haar state $h$ which satisfies $h_{1} = h_{2} = h_{3} = 0$, we conclude that $\alpha = 0$.

Assume now that the result holds for some $n\in \bn$, $n \geq 3$. If $n = 2k$, then using $A = (p_{i}p_{j})^{k}$ and $B = p_{i}$, the same reasoning comparing $\phi(AB)$ and $\phi(BA)$ yields that $\phi_{2k+1}$ is a linear combination with real coefficients of the numbers $\phi_{n}$ for $n\leqslant 2k$, which together with the induction hypothesis yields the result. If instead $n = 2k+1$, the same argument works using $A = (p_{i}p_{j})^{k}$ and $B = p_{l}p_{i}$, where $l$ is given again by Lemma \ref{lem:nonvanishing}.
\end{proof}

\begin{rmk}\label{rmk:concreteSN}
One should stress the fact that the coefficients $a_{n}$ and $b_{n}$ need not be unique. Indeed, the conclusion of Theorem \ref{thm:classificationpermutations} states that the coefficients $a_{n}$, $b_{n}$ such that $\phi_{n} = a_{n}\phi_{1} + b_{n}\phi_{2}$ for all $n \in \bn$  are exactly those satisfying $a_{n}d_{1} + b_{n}d_{2} = d_{n}$. Note however that for the particular choices made in the proof of Proposition \ref{prop:recursion} we have
\begin{equation}
\label{a3}
a_3 = (h(p_ip_jp_i \chi_3))^{-1} (h(p_i p_j \chi_1) - h(p_ip_jp_i \chi_1)),
\end{equation}
\begin{equation}
\label{b3}
b_3 = (h(p_ip_jp_i \chi_3))^{-1} (h(p_i p_j \chi_2) - h(p_ip_jp_i \chi_2).
\end{equation}
\end{rmk}

\subsection{Complete classification}

In the case of $O_{N}^{+}$, we could conclude because we already had three tracial central states at hand, namely the Haar state $h$, the counit $\varepsilon$ and the signed counit $\varepsilon_{\text{alt}}$. But the last one is not defined in $S_{N}^{+}$, hence we only have two states to start with. This is not sufficient to conclude with Proposition \ref{prop:recursion}. To go further, we will need another tool, motivated by the theory of convolution semigroups and provided by Lemma \ref{lem:conv}. We will soon use this to conclude our description of $\mathrm{TCS}(S_{N}^{+})$, but we first isolate certain technical computations, which will allow us to deduce that the coefficients given by \eqref{a3}-\eqref{b3} are non-zero.

\begin{lem} \label{lem:computationa3}
Let $N\geqslant 4$, and let $i,j\in \{1, \ldots,N\}$, $i \neq j$. Then, $h(p_{i}p_{j}\chi_{1}) \neq h(p_{i}p_{j}p_{i}\chi_{1})$.
\end{lem}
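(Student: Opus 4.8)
Let me reconstruct what this lemma requires. We have $\chi_1 = \chi_p - 1 = \sum_k p_{kk} - 1$ where $p$ is the fundamental (magic) representation. So $h(p_ip_j\chi_1) = \sum_{k} h(p_ip_jp_k) - h(p_ip_j)$ and similarly $h(p_ip_jp_i\chi_1) = \sum_k h(p_ip_jp_ip_k) - h(p_ip_jp_i)$. The claim is that these two quantities differ.

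\begin{proof}[Plan]
The plan is to compute both sides explicitly using the Weingarten formula for $S_N^+$ from \cite{BB2}, which involves non-crossing \emph{partitions} $NC(n)$ (not just pairings) and the associated Gram/Weingarten matrices. First I would reduce everything to Haar-state values on monomials in the $p_i = p_{ii}$: write $\chi_1 = \sum_{k=1}^N p_k - 1$ and expand, using the relations $p_i^2 = p_i$ and $p_ip_k p_i = \ldots$ carefully — note that $p_ip_jp_i \neq 0$ in general in $\Pol(S_N^+)$, unlike the classical case, so no naive collapse occurs. This leaves me needing $h(p_ip_j)$, $h(p_ip_jp_i)$, $h(p_ip_jp_k)$ for $k \notin\{i,j\}$, $h(p_ip_jp_ip_k)$ for various $k$, and a few more short monomials.

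Next I would invoke the known closed forms for these low-degree Haar integrals on $S_N^+$ — these are classical and appear in \cite{BB2} (see also the combinatorial computations gathered in the Appendix of the present paper, which the excerpt advertises as containing exactly such calculations for the Haar state of $S_N^+$). In practice: using $|NC(2)| = 2$, $|NC(3)| = 5$, $|NC(4)| = 14$ together with the Gram matrix $G_N(\pi,\sigma) = N^{b(\pi\vee\sigma)}$ and the $\delta_\pi(i)$ matching functions, one gets $h(p_ip_j)$, $h(p_ip_jp_i)$, etc.\ as explicit rational functions of $N$. I would then form the difference $h(p_ip_j\chi_1) - h(p_ip_jp_i\chi_1)$ and check it is a rational function of $N$ whose numerator is a nonzero polynomial with no roots at integers $N \geq 4$; this is where I expect the main (though purely computational) obstacle to lie, since it requires care with how many indices coincide in each term of $\sum_k$.

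A cleaner alternative I would try first, to avoid heavy Weingarten bookkeeping: interpret $h(p_ip_jp_i\chi_1)$ and $h(p_ip_j\chi_1)$ representation-theoretically. Since $\chi_1$ is the character of $u^1$, for any coefficient $a$ of a representation $v$ one has $h(a\chi_1) = $ (the coefficient of $u^1$ inside the decomposition of $v$, evaluated at the appropriate vectors), by Peter–Weyl orthogonality — essentially the computation already performed in the proof of Lemma~\ref{lem:nonvanishing}. Concretely, $p_ip_jp_i = u^{\otimes 3}_{\eta,\eta}$ with $\eta = e_i\otimes e_j\otimes e_i$ and $p_ip_j = u^{\otimes 2}_{\xi,\xi}$ with $\xi = e_i\otimes e_j$; then $h(p_ip_j\chi_1) = \|P_1^{(2)}\xi\|^2/\dim(u^1)$ and $h(p_ip_jp_i\chi_1) = \|P_1^{(3)}\eta\|^2/\dim(u^1)$, where $P_1^{(k)}$ is the projection onto the copy of $H_1 = \C^{N-1}$ sitting inside $(\C^N)^{\otimes k}$ via the fusion rules. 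The fixed-point / intertwiner spaces here are spanned by explicit non-crossing-partition vectors, so both norms can be computed by elementary linear algebra in a small-dimensional space (the relevant intertwiner space $\mathrm{Hom}(u^1, u^{\otimes k})$ being low-dimensional), and I would finish by checking the two resulting expressions in $N$ are unequal for all $N \geq 4$. The genuinely unavoidable work — and the one place an error could hide — is the precise identification of the vector spanning (or the projection onto) the $u^1$-isotypic component inside $u^{\otimes 3}$ versus inside $u^{\otimes 2}$; once that is pinned down the inequality is a one-line check.
\end{proof}
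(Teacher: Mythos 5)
Your proposal is a strategy outline rather than a proof: at no point do you actually establish the inequality, which is the entire content of the lemma. Both of your routes end with ``check the two resulting expressions in $N$ are unequal'', but neither expression is ever produced. Moreover, the ``cleaner alternative'' underestimates the work involved. Your formula $h(p_ip_jp_i\chi_1)=\|P_1^{(3)}\eta\|^2/\dim(u^1)$ is only correct if $P_1^{(3)}$ denotes the projection onto the full $u^1$-\emph{isotypic} component of $(\C^N)^{\otimes 3}$, not onto ``the copy of $H_1$'' as you write: since $\chi^2=2+3\chi_1+\chi_2$ and $\chi^3=5+9\chi_1+5\chi_2+\chi_3$, the representation $u^1$ occurs with multiplicity $3$ in $u^{\otimes 2}$ and multiplicity $9$ in $u^{\otimes 3}$. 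Computing the norm of the isotypic projection of a specific vector then requires the Gram matrix (and its inverse) of the non-crossing partition intertwiners spanning $\mathrm{Hom}(u^1,u^{\otimes 3})$ --- which is precisely the Weingarten bookkeeping you set out to avoid, and certainly not ``a one-line check''. So as it stands there is a genuine gap: no computation is carried out, and the step you declare easy is where all the difficulty sits.

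For comparison, the paper's argument avoids any explicit Weingarten or intertwiner computation. Writing $\chi=\chi_1+1$, traciality gives $h(p_ip_j(1-p_i))=0$, so the difference in question equals $h(p_ip_j\chi)-h(p_ip_jp_i\chi)$. Permutation invariance of $h$ together with the low moments $h(\chi^2)=2$, $h(\chi^3)=5$ yields $h(p_ip_j\chi)=\tfrac{3}{N(N-1)}$ and expresses $h(p_ip_jp_i\chi)$ through $h(p_i\chi p_i\chi)$; the Cauchy--Schwarz inequality bounds the latter, and equality of the two quantities would force equality in Cauchy--Schwarz, hence $p_i\chi=\chi p_i$ (using faithfulness of $h$), which fails in $\Pol(S_N^+)$. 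If you want to salvage your approach, you would need to either carry out the multiplicity-space computation in degrees $2$ and $3$ explicitly, or find a structural argument of this kind that converts the inequality into a qualitative noncommutativity statement.
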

\begin{proof}
 Setting
\begin{equation*}
\chi = \sum_{i=1}^{N}p_{i} = \chi_{1} + 1,
\end{equation*}
we have
\begin{align*}
h(p_{i}p_{j}\chi_{1}) - h(p_{i}p_{j}p_{i}\chi_{1}) & = h(p_{i}p_{j}(1-p_{i})\chi_{1}) 
= h(p_{i}p_{j}(1-p_{i})\chi) \\
& = h(p_{i}p_{j}\chi) - h(p_{i}p_{j}p_{i}\chi).
\end{align*}
Observe that $h$ is invariant under any permutation of the indices of the coefficients appearing above, as any permutation $\sigma \in S_N$ via a map $u_{kl} \mapsto u_{\sigma(k) \sigma(l)}$ induces an automorphism of the quantum group in question. It is easy to see that $\chi$ is fixed by any such automorphism. Hence,
\begin{align*}
h(p_{i}\chi^{2}) & = \sum_{k=1}^{N}h(p_{i}p_{k}\chi)  = h(p_{i}\chi) + (N-1)h(p_{i}p_{j}\chi).
\end{align*}
Using that argument again, we end up with
\begin{align*}
h(p_{i}p_{j}\chi) & = \frac{1}{N(N-1)}h(\chi^{3}) - \frac{1}{N(N-1)}h(\chi^{2}) = \frac{3}{N(N-1)},
\end{align*}
where we use  the fact that $h(\chi^2) = h(\chi_1^2 + 2 \chi_1 +1) = h(\chi_2 + 3 \chi_1+2)=2$, and a similar computation yielding $h(\chi^3)=5$.
Let us now compare this to the other term, which is
\begin{align*}
h(p_{i}p_{j}p_{i}\chi) & = \frac{1}{N-1}(h(p_{i}\chi p_{i}\chi) - h(p_{i}\chi)) = \frac{1}{N-1}h(p_{i}\chi p_{i}\chi) - \frac{1}{N(N-1)}h(\chi^{2}) \\
& = \frac{1}{N-1}h(p_{i}\chi p_{i}\chi) - \frac{2}{N(N-1)}.
\end{align*}
By the Cauchy-Schwarz inequality,
\begin{align*}
h(p_{i}\chi p_{i}\chi) & \leqslant h(p_{i}\chi\chi p_{i})^{1/2}h(\chi p_{i}p_{i}\chi)^{1/2} = h(p_{i}\chi^{2}) = \frac{1}{N}h(\chi^{3}) = \frac{5}{N}.
\end{align*}
Assume then that the statement we want to prove does not hold. Then $h(p_{i}p_{j}p_{i}\chi) = 3/N(N-1)$ and the Cauchy-Schwarz inequality is an equality. But this means that there is $\lambda > 0$ such that $p_{i}\chi = \lambda \chi p_{i}$. Taking norms yields $\lambda = 1$, but the relation $p_{i}\chi = \chi p_{i}$ does not hold in $\Pol(S_{N}^{+})$ with $N \geq 4$, as can be seen using for example the representation of the latter (universal) algebra induced by block matrices $\left(\begin{array}{cc} p & p^\perp \\ p^\perp & p \end{array}\right)$, $\left(\begin{array}{cc} q & q^\perp \\ q^\perp & q \end{array}\right)$, with two non-commuting projections $p$ and $q$. This yields a contradiction.
\end{proof}	
	
Unfortunately, the previous trick does not work for the other coefficient, and one has to resort to much more involved computations. Because these mainly rely on the properties of $S_{N}^{+}$ and do not involve tracial central states, we have gathered them in an appendix to this paper. This gives us the next corollary.

\begin{cor}\label{lem:nonzerocoefficients}
Let $N\geqslant 6$. We may assume that the conclusion of Proposition \ref{prop:recursion} holds with $a_{3}, b_{3}\neq 0$.
\end{cor}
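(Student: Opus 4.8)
The plan is to combine the formulas \eqref{a3}--\eqref{b3} for $a_3$ and $b_3$ (valid for the specific choice $A=p_ip_j$, $B=p_i$ made in the proof of Proposition \ref{prop:recursion}, with $i,j$ supplied by Lemma \ref{lem:nonvanishing}) with two non-vanishing statements. The denominator $h(p_ip_jp_i\chi_3)$ is already nonzero by Lemma \ref{lem:nonvanishing}, so it suffices to show that the two numerators do not vanish. For $a_3$ the numerator is $h(p_ip_j\chi_1)-h(p_ip_jp_i\chi_1)$, which is exactly what Lemma \ref{lem:computationa3} asserts to be nonzero (for $N\geqslant 4$, hence in particular for $N\geqslant 6$). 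Thus $a_3\neq 0$ is immediate. For $b_3$ the numerator is $h(p_ip_j\chi_2)-h(p_ip_jp_i\chi_2)$, and the claim that this is nonzero is precisely the content of the combinatorial computations deferred to the Appendix; invoking those computations gives $b_3\neq 0$.

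Concretely I would structure the short proof as follows. First recall from Remark \ref{rmk:concreteSN} the explicit expressions \eqref{a3} and \eqref{b3}. Then note that by Lemma \ref{lem:nonvanishing} the common prefactor $(h(p_ip_jp_i\chi_3))^{-1}$ is a well-defined nonzero real number, so the (non)vanishing of $a_3$, respectively $b_3$, reduces to the (non)vanishing of $h(p_ip_j\chi_1)-h(p_ip_jp_i\chi_1)$, respectively $h(p_ip_j\chi_2)-h(p_ip_jp_i\chi_2)$. Apply Lemma \ref{lem:computationa3} to handle the first difference, and cite the Appendix computation for the second. This yields $a_3\neq 0$ and $b_3\neq 0$ for the particular choice of $A$, $B$; since the recursion of Proposition \ref{prop:recursion} was built on exactly that choice, the phrase ``we may assume'' in the statement is justified — with this choice of the initializing step, the conclusion of Proposition \ref{prop:recursion} holds and has $a_3,b_3\neq 0$.

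The only genuine obstacle is the Appendix computation showing $h(p_ip_j\chi_2)\neq h(p_ip_jp_i\chi_2)$, which is why it was separated out: unlike the $\chi_1$ case, the trick of rewriting $\chi_1$ in terms of the symmetric element $\chi=\sum_k p_k$ and exploiting permutation invariance of $h$ does not suffice, because $\chi_2$ is a genuine quadratic (not affine) polynomial in $\chi_1$ and the relevant Haar-state values must be computed more carefully (e.g.\ via the Weingarten formula for $S_N^+$ or via the explicit action of the Jones-Wenzl-type projections). Everything else here is bookkeeping: assembling the two lemmas and the Appendix result into the stated corollary. Hence the proof of Corollary \ref{lem:nonzerocoefficients} itself is a one-paragraph assembly, contingent on the Appendix.

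\begin{proof}
Recall from Remark \ref{rmk:concreteSN} that, with the choice $A = p_ip_j$ and $B = p_i$ used in the proof of Proposition \ref{prop:recursion} (where $i,j$ are supplied by Lemma \ref{lem:nonvanishing}), the coefficients $a_3$ and $b_3$ are given by \eqref{a3} and \eqref{b3}. By Lemma \ref{lem:nonvanishing} we have $h(p_ip_jp_i\chi_3) \neq 0$, so the common factor $(h(p_ip_jp_i\chi_3))^{-1}$ is a well-defined nonzero real. Hence $a_3 \neq 0$ if and only if $h(p_ip_j\chi_1) - h(p_ip_jp_i\chi_1) \neq 0$, which holds by Lemma \ref{lem:computationa3} (as $N \geqslant 6 \geqslant 4$); and $b_3 \neq 0$ if and only if $h(p_ip_j\chi_2) - h(p_ip_jp_i\chi_2) \neq 0$, which is established by the computations of the Appendix. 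Since the recursion in Proposition \ref{prop:recursion} is initialized precisely by this choice of $A$ and $B$, we may therefore assume that its conclusion holds with $a_3, b_3 \neq 0$.
\end{proof}
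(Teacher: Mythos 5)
Your proof is correct and follows essentially the same route as the paper: the paper's own argument is exactly to take the explicit coefficients \eqref{a3}--\eqref{b3} from the proof of Proposition \ref{prop:recursion} and invoke Lemma \ref{lem:computationa3} together with the Appendix result (Theorem \ref{thm:computationb3}), with Lemma \ref{lem:nonvanishing} guaranteeing the denominator is nonzero. Your write-up just spells out this assembly a bit more explicitly than the paper does.
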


\begin{proof}
We may use the explicit coefficients constructed in the proof of Proposition \ref{prop:recursion}, in particular with $a_3, b_3$ given by the formulas \eqref{a3}-\eqref{b3} (for suitably chosen $i,j\in \{1, \ldots,N\}$). Then the conclusion follows from Lemma \ref{lem:computationa3} and Theorem \ref{thm:computationb3}.	
\end{proof}

Once again, our final result is that apart from the obvious extremal tracial central states $\varepsilon$ and $h$, there is nothing else.

\begin{thm}\label{thm:classificationpermutations}
For $N\geqslant 6$, any tracial central state on $S_{N}^{+}$ is a convex combination of $\varepsilon$ and $h$.
\end{thm}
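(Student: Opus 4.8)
The plan is to combine the reduction to two parameters (Proposition~\ref{prop:recursion} together with Corollary~\ref{lem:nonzerocoefficients}) with the convolution-semigroup machinery of Lemma~\ref{lem:conv} in order to eliminate the one remaining degree of freedom left after noting that $\varepsilon$ and $h$ account for two dimensions. Concretely, by Proposition~\ref{prop:recursion} any tracial central functional $\phi$ is determined by the pair $(\phi_1,\phi_2)$, and by linearity the space of tracial central functionals is at most two-dimensional; since $\varepsilon$ and $h$ are linearly independent tracial central functionals, that space is exactly two-dimensional and every tracial central functional is a linear combination $\phi = s\,\varepsilon + t\,h$ for unique $s,t\in\C$. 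The real content is therefore to show that when $\phi$ is moreover a \emph{state}, one must have $s,t\geqslant 0$ (automatically $s+t=1$ by evaluating at $\chi_0$). Equivalently, writing $\lambda = \phi_1/d_1$, the claim is that a tracial central state forces $\lambda\in[0,1]$, with $\lambda=1$ giving $\varepsilon$ and $\lambda=0$ giving $h$ (and the relation $\phi_n = \lambda d_n$ for all $n\in\bn$, which follows from $\phi = \lambda\varepsilon + (1-\lambda)h$).

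First I would pin down the precise form of a tracial central state: since $\phi = \lambda\varepsilon + (1-\lambda)h$ we get $\phi_n = \lambda d_n$ for every $n\in\bn$ and $\phi_0 = 1$. Note that this already uses Corollary~\ref{lem:nonzerocoefficients} implicitly — it is what guarantees that the recursion genuinely collapses the sequence $(\phi_n)$ down to an affine function of $(\phi_1,\phi_2)$ with $\phi_2$ itself then determined, via a further application of the same traciality argument, so that in fact a \emph{single} parameter $\lambda$ remains once one knows that $\varepsilon,h$ span everything. The upper bound $\lambda\leqslant 1$ is immediate from $\|\chi_1\| = d_1$ in $C^u(S_N^+)$ together with $\phi$ being a state: $|\phi_1|\leqslant d_1$, so $|\lambda|\leqslant 1$; this also gives $\lambda\geqslant -1$, but we need the sharper $\lambda\geqslant 0$.

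For the lower bound $\lambda\geqslant 0$ the plan is to use Lemma~\ref{lem:conv}(iv): consider the family $\psi_t := \exp_\star\!\big(t(\phi-\varepsilon)\big)$ for $t>0$, which by Lemma~\ref{lem:conv}(iii) is again a tracial central functional, in fact a state (convolution exponentials of $(\phi-\varepsilon)$ with $\phi$ a state are states — this is the standard Schoenberg-correspondence fact, available from \cite{Sch}). By Lemma~\ref{lem:conv}(iv) we have $\psi_t(\chi_\alpha) = d_\alpha\exp(t\lambda_\alpha)$ with $\lambda_\alpha = \phi_\alpha/d_\alpha - 1$. But $\phi_\alpha/d_\alpha = \lambda$ for every non-trivial $\alpha$, so $\lambda_\alpha = \lambda - 1$ is the \emph{same} constant $c := \lambda-1$ for all $\alpha\neq 0$, and $\psi_t(\chi_n) = d_n e^{tc}$ for all $n\in\bn$. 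If $c > 0$, i.e. $\lambda > 1$, this already contradicts $|\psi_t(\chi_n)|\leqslant d_n$ — but we have ruled that out. The point of the exponential is the opposite regime: if $c < -$ (something), we want a contradiction with positivity. The cleanest route is: $\psi_t$ is a tracial central \emph{state}, so by the classification we are in the middle of proving — wait, that is circular — so instead I would argue directly. Set $\rho := e^{tc}$; then $\psi_t = \rho\,\varepsilon + (1-\rho)h$ on characters, hence (by the two-dimensionality already established, applied to $\psi_t$) $\psi_t = \rho\varepsilon + (1-\rho)h$ as a functional, and $\psi_t$ is a state forces $\rho\in[0,1]$ by the \emph{upper}-bound argument applied to $\psi_t$ (giving $\rho\leqslant 1$, automatic since $c<0$... hmm) — the genuinely new information is that $\psi_t(p_1) = \rho\,\varepsilon(p_1) + (1-\rho)h(p_1) = \rho + (1-\rho)/N \geqslant 0$, which for $\rho\in(0,1)$ is automatic but as $t\to\infty$ with $c<0$ gives $\rho\to 0$, so $\psi_\infty = h$, no contradiction. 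I expect the actual decisive positivity input to be a Cauchy–Schwarz / positivity estimate on $\phi$ evaluated on a product like $\chi_n^2$ or $\chi_{n+1}\chi_n$ exactly as in the $O_N^+$ proof: using the $S_N^+$ fusion rules one has $\chi_1\chi_n = \chi_{n-1}+\chi_n+\chi_{n+1}$, from which $\phi(\chi_1\chi_n) = \lambda(d_{n-1}+d_n+d_{n+1}) = \lambda d_1 d_n$, while $\phi(\chi_n^2) = \sum_k m_{n,n}^k \phi_k = \lambda(d_n^2 - 1) + 1$ for the appropriate multiplicities; positivity of $\phi(\chi_n^2)$ as $n\to\infty$ forces $\lambda\geqslant 0$ since $d_n\to\infty$.

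The main obstacle is making the last paragraph airtight: one must extract, purely from $\phi$ being a \emph{positive} central tracial functional on $\Pol(S_N^+)$ — not a priori bounded, though boundedness follows once we know $\phi$ is a combination of $\varepsilon$ and $h$ — the inequality $\phi(\chi_n^2)\geqslant 0$ together with the exact value $\phi(\chi_n^2) = 1 + \lambda(d_n^2-1)$, and then let $n\to\infty$. The value computation requires knowing $\chi_n^2 = \sum_{k} \chi_k$ summed over the correct index set dictated by the $S_N^+$ fusion graph (equivalently $\chi_n^2$ is a sum of $d_n^2$ one-dimensional contributions in the Grothendieck ring, with $\chi_0$ appearing once), which is standard since $S_N^+$ has the fusion rules of $SO(3)$/$SU(2)$ with an extra diagonal term. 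Once $\lambda\in[0,1]$ is established the decomposition $\phi = \lambda\varepsilon + (1-\lambda)h$ is both a valid state and manifestly a convex combination of $\varepsilon$ and $h$, and uniqueness is immediate because $\lambda = \phi(\chi_1)/d_1$ is recovered from $\phi$; this completes the proof.
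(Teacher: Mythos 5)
Your overall architecture (reduce via Proposition \ref{prop:recursion}, then get positivity from the fusion rules and a limit $n\to\infty$) resembles the paper's, and your final positivity step is essentially the paper's: $\lambda\geqslant 0$ from $\phi(\chi_n\chi_n^*)=1+\lambda\sum_{k=1}^{2n}d_k/d_1\cdot d_1\geqslant 0$ with $n\to\infty$, and $\lambda\leqslant 1$ from a norm bound. But there is a genuine gap at your very first step, and it is the heart of the theorem. You claim that Proposition \ref{prop:recursion} makes the space of tracial central functionals at most two-dimensional, hence equal to $\mathrm{span}\{\varepsilon,h\}$. This does not follow: the proposition expresses $\phi_n$ for $n\geqslant 3$ in terms of $(\phi_1,\phi_2)$, so the linear map $\phi\mapsto(\phi_0,\phi_1,\phi_2)$ is injective on tracial central functionals --- a three-parameter bound, not two --- while the map $\phi\mapsto(\phi_1,\phi_2)$ that your dimension count actually uses is not injective at all, since it annihilates $h$. (For states one may set $\phi_0=1$, but states do not form a vector space, so no dimension argument applies to them.) Consequently nothing you have said rules out a tracial central state with, say, $\phi_1=0$ but $\phi_2\neq 0$, which would not be a combination of $\varepsilon$ and $h$. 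The missing ingredient is precisely the relation $\phi_2=(d_2/d_1)\phi_1$, and this is exactly where the paper uses the semigroup machinery: apply Proposition \ref{prop:recursion} to $\phi_t=\exp_\star(t(\phi-\varepsilon))$ for every $t\geqslant 0$, use Lemma \ref{lem:conv}\,(iv) to convert $\phi_t(\chi_3)=a_3\phi_t(\chi_1)+b_3\phi_t(\chi_2)$ into $d_3e^{t\lambda_3}=a_3d_1e^{t\lambda_1}+b_3d_2e^{t\lambda_2}$, and invoke the linear independence of the exponential functions together with $a_3,b_3\neq 0$ (Corollary \ref{lem:nonzerocoefficients}, which rests on the Appendix computations) to force $\lambda_1=\lambda_2$, i.e.\ $\phi_2=(d_2/d_1)\phi_1$, whence $\phi_n=(d_n/d_1)\phi_1$ for all $n$.

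In your write-up the semigroup is instead aimed at the positivity of $\lambda$, where (as you notice yourself mid-paragraph) it leads nowhere, and the key relation between $\phi_1$ and $\phi_2$ is waved through with ``a further application of the same traciality argument'' --- but traciality alone only yields relations of the form $\phi_n=a_n\phi_1+b_n\phi_2$ and gives no constraint linking $\phi_1$ to $\phi_2$; the non-vanishing of $a_3,b_3$ is useless without the exponential trick (or some substitute). Once that relation is actually established, your concluding positivity paragraph does close the argument in the same way the paper does, so the fix is to redirect the semigroup argument to its proper target rather than to positivity.
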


\begin{proof}
Let us fix a tracial central functional $\phi$ and for any $t \geqslant 0$ define $\phi_t = \exp_\star(t(\phi - \varepsilon))$.
By Lemma \ref{lem:conv} (iii) and Proposition \ref{prop:recursion}, we have for all $t\geqslant 0$
\begin{equation*}
\phi_{t}(\chi_{3}) = a_{3}\phi_{t}(\chi_{1}) + b_{3}\phi_{t}(\chi_{2}).
\end{equation*}
For each  $n \in \bn$ set $\lambda_{n} = \phi(\chi_{n})/d_{n} - 1$. Lemma \ref{lem:conv} (iv) shows then that  for all $t\geqslant 0$ we have
\begin{equation*}
d_{3}e^{t\lambda_{3}} = a_{3}d_{1}e^{t\lambda_{1}} + b_{3}d_{2}e^{t\lambda_{2}}.
\end{equation*}
Now, the functions $t\mapsto e^{at}$ are linearly independent for different values of $a$. Since $a_{3}, b_{3}\neq 0$ by Corollary \ref{lem:nonzerocoefficients}, we conclude that the exponents must be equal. In particular, $\lambda_{1} = \lambda_{2}$, which yields
\begin{equation*}
\phi_{2} = \frac{d_{2}}{d_{1}}\phi_{1}.
\end{equation*}
It then follows from Proposition \ref{prop:recursion} (and the properties of the counit)  that $\phi_{n} = d_{n}\phi_{1}/d_{1}$ for all $n\in \bn$.

Assume now that $\phi$ is a moreover a state. We will prove by contradiction that $\phi_{1}\geqslant 0$. By the Cauchy-Schwarz inequality, for each $n \in \bn$ we have
\begin{align*}
\vert \phi(\chi_{n})\vert^{2} & \leqslant \phi(\chi_{n}\chi_{n}^{*})  = \sum_{k=0}^{2n}\phi(\chi_{k}) = 1 + \sum_{k=1}^{2n}\frac{d_{k}}{d_{1}}\phi_{1}  = 1 + \left(\sum_{k=1}^{2n}\frac{d_{k}}{d_{1}}\right)\phi_{1}.
\end{align*}
As a consequence, if $\phi_{1} < 0$, then for $n$ large enough we have $\vert \phi_{n}\vert^{2} < 0$, which yields a contradiction. We can now conclude the proof. Indeed, since $h_{1} = 0$ and $\varepsilon_{1} = d_{1}\geqslant \phi_{1}$, we have $t = \phi_{1}/d_{1}\in [0, 1]$ and
\begin{equation*}
\phi_{1} = t\varepsilon_{1} + (1-t)h_{1}.
\end{equation*}
In other words, $\phi$ and  $t\varepsilon + (1-t)h$ are two tracial central states with the same value on $\chi_{1}$, hence they are equal by the first part of the proof.
\end{proof}

\begin{cor}\label{cor:SN}
Any tracial central linear functional on $\Pol(S_{N}^{+})$ is a linear combination of $\varepsilon$ and $h$, and therefore extends to a bounded functional on $C^{u}(S_{N}^{+})$.
\end{cor}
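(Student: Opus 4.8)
The goal is Corollary \ref{cor:SN}: any tracial central linear functional on $\Pol(S_N^+)$ is a linear combination of $\varepsilon$ and $h$, hence bounded on $C^u(S_N^+)$. The plan is to reduce the general (not necessarily positive) case to the already-established state case by a simple decomposition argument, in the same spirit as Lemma \ref{lem:linearcombination} for $O_N^+$.

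First I would invoke Proposition \ref{prop:recursion}: any tracial central linear functional $\phi$ satisfies $\phi_n = a_n\phi_1 + b_n\phi_2$ for all $n\in\bn$, so $\phi$ is completely determined by the pair $(\phi_1,\phi_2)\in\C^2$. Thus the space of tracial central functionals is at most two-dimensional. On the other hand, $\varepsilon$ and $h$ are both tracial central functionals, and they are linearly independent: $h_1 = 0$ while $\varepsilon_1 = d_1 \neq 0$, and $h_2 = 0$ while $\varepsilon_2 = d_2 \neq 0$, so the pairs $(\varepsilon_1,\varepsilon_2) = (d_1,d_2)$ and $(h_1,h_2) = (0,0)$ are linearly independent as soon as we also note... actually $(0,0)$ is never independent from anything. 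So I need a sharper argument: the span of $\{\varepsilon, h\}$ is two-dimensional because $h$ is nonzero (it has $h_0 = 1$) and $\varepsilon$ is not a scalar multiple of $h$ (as $\varepsilon_1 = d_1\neq 0 = h_1$). Since $h_0 = \varepsilon_0 = 1$ but $h_1\neq\varepsilon_1$, the functionals are linearly independent. Hence $\mathrm{span}\{\varepsilon,h\}$ is exactly two-dimensional, and any linear map on this span is determined by its values at, say, $\chi_0$ and $\chi_1$.

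Now given an arbitrary tracial central linear functional $\phi$, I would produce explicit scalars realizing it. From the proof of Theorem \ref{thm:classificationpermutations} we know that $\phi_n = d_n\phi_1/d_1$ for all $n\in\bn$ — this conclusion used only Proposition \ref{prop:recursion}, Corollary \ref{lem:nonzerocoefficients}, and Lemma \ref{lem:conv} (iii)-(iv), none of which required positivity of $\phi$; the argument via linear independence of exponentials $t\mapsto e^{at}$ goes through verbatim for any tracial central functional. Set $t = \phi_1/d_1\in\C$ and $s = \phi_0$. Then $\psi := s\cdot h + t\cdot(\varepsilon - h)$ is a tracial central functional with $\psi_0 = s = \phi_0$ and $\psi_n = t\,d_n = d_n\phi_1/d_1 = \phi_n$ for all $n\in\bn$. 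Since both $\phi$ and $\psi$ are central, they agree on all matrix coefficients $U^\alpha_{ij}$ once they agree on all characters $\chi_\alpha$, so $\phi = \psi = (s-t)h + t\varepsilon$, a linear combination of $\varepsilon$ and $h$. Boundedness on $C^u(S_N^+)$ is then immediate, since $\varepsilon$ and $h$ are states, hence bounded, and a finite linear combination of bounded functionals is bounded.

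The only genuinely delicate point is to make sure the derivation $\phi_n = d_n\phi_1/d_1$ really does not use positivity. Looking back at the proof of Theorem \ref{thm:classificationpermutations}: Lemma \ref{lem:conv} (iii) applies to any tracial central $\phi$ (indeed (iii) is an if-and-only-if valid for arbitrary functionals), Lemma \ref{lem:conv} (iv) is a purely algebraic identity for any central functional, and the linear-independence-of-exponentials step forces $\lambda_1 = \lambda_2$ hence $\phi_2 = (d_2/d_1)\phi_1$, from which Proposition \ref{prop:recursion} and $a_nd_1 + b_nd_2 = d_n$ (this last using the counit, which forces $d_n = a_nd_1 + b_nd_2$) give $\phi_n = d_n\phi_1/d_1$. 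So I expect no obstacle beyond carefully quoting the already-proved facts. I would write the proof as follows.

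\begin{proof}
Let $\phi$ be a tracial central linear functional on $\Pol(S_N^+)$. Exactly as in the first part of the proof of Theorem \ref{thm:classificationpermutations} — an argument which nowhere uses positivity of $\phi$ — Lemma \ref{lem:conv} together with Proposition \ref{prop:recursion} and Corollary \ref{lem:nonzerocoefficients} yields $\lambda_1 = \lambda_2$, where $\lambda_n = \phi(\chi_n)/d_n - 1$, and hence, applying Proposition \ref{prop:recursion} and the fact that the same recursion holds for the counit, $\phi_n = \frac{d_n}{d_1}\phi_1$ for all $n\in\bn$. Now set $t = \phi_1/d_1$ and consider the tracial central functional
\begin{equation*}
\psi = \phi(1)\, h + t\,(\varepsilon - h).
\end{equation*}
Then $\psi(\chi_0) = \phi(1) = \phi(\chi_0)$ and, for $n\in\bn$, $\psi(\chi_n) = t\, d_n = \frac{d_n}{d_1}\phi_1 = \phi(\chi_n)$. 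Since both $\phi$ and $\psi$ are central, a central functional being determined by its values on the characters, we conclude $\phi = \psi = (\phi(1) - t)\,h + t\,\varepsilon$. In particular $\phi$ is a linear combination of the states $\varepsilon$ and $h$, both of which are bounded on $C^u(S_N^+)$, so $\phi$ extends to a bounded functional on $C^u(S_N^+)$.
\end{proof}
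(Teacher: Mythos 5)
Your proof is correct and follows essentially the same route as the paper, whose proof of Corollary \ref{cor:SN} simply observes that the first part of the proof of Theorem \ref{thm:classificationpermutations} (yielding $\phi_n = d_n\phi_1/d_1$ via Proposition \ref{prop:recursion}, Corollary \ref{lem:nonzerocoefficients} and Lemma \ref{lem:conv}) nowhere uses positivity. You merely spell out the details the paper leaves implicit, including the harmless extension of Lemma \ref{lem:conv} (iv) to non-positive central functionals and the explicit decomposition $\phi = (\phi(1)-t)h + t\varepsilon$.
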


\begin{proof}	
It suffices to note that the first part of the proof of Theorem \ref{thm:classificationpermutations} does not use positivity of the tracial central functional $\phi$.
\end{proof}

\begin{rmk}
For $N = 2, 3$, $S_{N}^{+}$ coincides with the classical permutation group $S_{N}$, and therefore tracial central states can be identified with central probability measures on $S_N$ (i.e.\ with probability measures constant on conjugacy classes). For $N = 4, 5$, our methods are not sufficient, but we suspect that the statement of Theorem \ref{thm:classificationpermutations} is still true.
\end{rmk}

\begin{rmk}
	As noted by the referee, one can also replace the use of the convolution semigroups in the proof of Theorem \ref{thm:classificationpermutations} by considering finite convolution powers of  tracial central states. The same applies to the proof of Theorem \ref{thm:classifHyperoctahedral}.
\end{rmk}

\section{Quantum hyperoctahedral groups} \label{sec:hyperoctahedral}

In this section, we will extend the previous results to the hyperoctahedral quantum groups $H_{N}^{+}$, introduced in \cite{BBC}, once again for $N \geqslant 6$. The techniques are very similar to the ones used for $S_{N}^{+}$, but the difference in the structure of the representations theory implies some changes when evaluating the Haar state on monomials. Let us start with a few basics. Fix $N \in \bn$. The CQG-algebra $\Pol(H_{N}^{+})$ is the universal $*$-algebra generated by $N^{2}$ elements $(u_{ij})_{1\leqslant i, j\leqslant N}$ such that for all $1\leqslant i, j\leqslant N$,
\begin{itemize}
\item $u_{ij}^{*} = u_{ij}$;
\item $u_{ij}u_{ik} = \delta_{jk}u_{ij}^{2}$ and $u_{ij}u_{kj} = \delta_{ik}u_{ij}^{2}$;
\item the elements $(u_{ij}^{2})_{1\leqslant i, j\leqslant N}$ satisfy the defining relations of $\Pol(S_{N}^{+})$.
\end{itemize}
It can be endowed with a compact quantum group structure through the unique $*$-homomorphism $\Delta : \Pol(H_{N}^{+})\to \Pol(H_{N}^{+})\otimes \Pol(H_{N}^{+})$ such that
\begin{equation*}
\Delta(u_{ij}) = \sum_{k=1}^{N}u_{ik}\otimes u_{kj}, \;\;\;i,j=1,\ldots,N.
\end{equation*}
The representation theory of $H_{N}^{+}$, which is a compact quantum group of Kac type,  was computed in \cite{BV} and can be described as follows: let $W$ be the free monoid on the two-element set $\{0, 1\}$ and let $x\ast y$ denote the sum modulo $2$ for $x, y\in \{0, 1\}$. We define an operation, again denoted by $\ast$, on $W$ through the formula
\begin{equation*}
w\ast w' = w_{1}\cdots w_{n-1}(w_{n}\ast w'_{1})w'_{2}\cdots w'_{k}
\end{equation*}
as well as an involution $w\mapsto \overline{w}$ :
\begin{equation*}
\overline{w_{1}\cdots w_{n}} = w_{n}\cdots w_{1}.
\end{equation*}
Then, the irreducible representations of $H_{N}^{+}$ can be indexed by elements of $W$ in such a way that the empty word $\emptyset$ is the trivial representation, $u = u^{1}$, $(u_{ij}^{2})_{1\leqslant i, j\leqslant N} = u^{0}\oplus u^{\emptyset}$ and for any words $w, w'\in W$,
\begin{equation*}
u^{w}\otimes u^{w'} = \sum_{w = az, w' = \overline{z}b} u^{ab}\oplus u^{a\ast b}.
\end{equation*}
Note that for the empty set we interpret the rules above as $\emptyset \emptyset = \emptyset$, and $\emptyset \star \emptyset$ is simply omitted. Thus for example $u^{1}\otimes u^1 = u^{11}\oplus u^{0}\oplus u^{\emptyset}$.

As for the case of $S_{N}^{+}$, we will use traciality in combination with the non-vanishing of some values of the Haar state to deduce relations between the values of all tracial central states on characters. We start with the non-vanishing part. To lighten notations, for each $i =1, \ldots, N$ we will write $u_{i}$ for $u_{ii}$. Besides, we set $u_{i}^{0} = u_{i}^{2}$, which is justified by the fact that $u_{i}^{2k+1} = u_{i}$ and $u_{i}^{2k+2} = u_{i}^{2}$ for all $1\leqslant i\leqslant N$ and $k\in \bn$. 


Given a central functional  $\phi$ on $\Pol(H_{N}^{+})$ we will write as usual $\phi_{w}:= \phi(\chi_w)$ for $w\in W$. 

\begin{rmk} \label{rem:SNinside}
We can realise $\Pol(S_{N}^{+})$ inside $\Pol(H_{N}^{+})$ as the subalgebra generated by $u_{ij}^{2}$, $i,j = 1,\cdots, N$. In this picture, the representation $u^{0}\in \Irr_{H_{N}^{+}}$ corresponds to the generating irreducible representation of $S_{N}^{+}$. Moreover, every tracial central functional $\phi$ on $\Pol(H_{N}^{+})$ remains by definition central when restricted to $\Pol(S_{N}^{+})$(and obviously also tracial). Thus, we can use Corollary \ref{cor:SN} to deduce that for example $\phi_{00} = d_{2}/d_{1} \phi_0$, where $d_{i}$ denotes the dimension of the $i$-th irreducible representation of $S_{N}^{+}$. We refer the reader to \cite[Prop 3.2]{Lem} for details.
\end{rmk}

As before, we first need a non-vanishing result for specific values of the Haar state involving irreducible characters.

\begin{lem}\label{lem:nonvanishinghyperoctahedral}
Let $n \in \bn$, let $w = w_{1}\cdots w_{n}\in W$ and let $i, j, l\in \{1, \ldots, N\}$ be three distinct indices. If $n$ is odd, so that $n = 2k+1$ for some $k \in \bn_0$, then
\begin{equation*}
h\left(u_{i}^{w_{1}}u_{j}^{w_{2}}\cdots u_{i}^{w_{2k-1}}u_{j}^{w_{2k}}u_{i}^{w_{2k+1}}\chi_{w}^{*}\right) \neq 0.
\end{equation*}
If $n = 2k+2$ for some $k \in \bn_0$, then
\begin{equation*}
h\left(u_{i}^{w_{1}}u_{j}^{w_{2}}\cdots u_{i}^{w_{2k-1}}u_{j}^{w_{2k}}u_{l}^{w_{2k+1}}u_{i}^{w_{2k+2}}\chi_{w}^{*}\right) \neq 0.
\end{equation*}
\end{lem}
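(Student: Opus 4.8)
The plan is to follow the same template used for $S_N^+$ in Lemma \ref{lem:nonvanishing}, transferring the computation to the level of representation-theoretic vectors. First I would observe that the monomial $u_i^{w_1}u_j^{w_2}\cdots$ appearing in each case is precisely the matrix coefficient $u^{\otimes n}_{\xi,\xi}$ of the $n$-fold tensor power of the fundamental representation, where $\xi = e_i^{(w_1)}\otimes e_j^{(w_2)}\otimes\cdots$, with the convention that $e_i^{(1)}$ records the ``degree-one'' slot and $e_i^{(0)}$ the ``degree-two'' (i.e.\ squared) slot; concretely one should think of $u_i^0 = u_i^2$ as the relevant coefficient of $u\otimes u$, so that in the odd case $\xi$ lives in $(\mathbb{C}^N)^{\otimes m}$ with $m$ equal to the total tensor-length obtained after expanding each $u_i^0$ into two legs. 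Then, exactly as in the proof of Lemma \ref{lem:nonvanishing}, projecting onto the isotypic component $H_w$ of $u^w$ via the orthogonal projection $P_w$, and using the Peter--Weyl computation
\begin{equation*}
h(u^w_{\xi,\eta}\chi_w) = \frac{\langle \xi,\eta\rangle}{\dim(u^w)}
\end{equation*}
valid for $\xi,\eta\in H_w$, I reduce the non-vanishing of $h(\cdots\chi_w^*)$ to exhibiting a single vector in $H_w$ whose inner product with $P_w\xi$ — equivalently, with $\xi$ itself — is nonzero.

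The next step is to produce such a test vector explicitly, adapting the vectors $((e_2-e_1)\otimes(e_4-e_3))^{\otimes k}\otimes(e_2-e_1)$ and the like that worked for $S_N^+$. Here the intertwiner structure of $H_N^+$ is governed not by non-crossing pairings but by the category of partitions with blocks of even size (the relevant $R$-maps from \cite{BV}, analogous to the $R_{\vert^{\odot n}}$ used in \cite{FTW}); I would invoke the known description of $H_w^\perp$ as the span of the ranges of the appropriate ``partition'' maps and check that a suitably alternating tensor — built from differences $e_b - e_a$ in the degree-one slots and from $e_b\otimes e_b - e_a\otimes e_a$ (or $e_b^{\otimes 2}$-type vectors) in the degree-two slots — is orthogonal to all of them. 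One then arranges the indices of that vector so that it pairs off against the monomial vector $\xi$ coming from the distinct indices $i,j,l$, yielding inner product $\pm 1$; the cases of odd $n$ (pattern $\cdots u_i u_j u_i$) and $n=2k+2$ (pattern $\cdots u_i u_j u_l u_i$) are handled by the two choices exactly as in Lemma \ref{lem:nonvanishing}, with $l$ a third index used in the second-to-last slot to break the symmetry. One should treat the empty-word and small-length edge cases separately, using the fusion rule conventions stated just above (in particular $u_i^2 = u^0\oplus u^\emptyset$ and Remark \ref{rem:SNinside}).

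The main obstacle I expect is the bookkeeping for the mixed degrees $w_i\in\{0,1\}$: unlike the $S_N^+$ situation, the ``ambient'' tensor power in which $\xi$ lives is not simply $(\mathbb{C}^N)^{\otimes n}$ but has a variable number of legs depending on how many $w_i$ equal $0$, and the projection $P_w$ onto the $u^w$-component sits inside a tensor power whose combinatorics (even-block partitions, the $\ast$-operation on words, the involution $w\mapsto\overline w$) must be carefully matched to the alternating test vector. Getting the test vector to genuinely lie in $H_w$ — i.e.\ orthogonal to the full span of the relevant partition-map ranges, not merely to the non-crossing pairings — is the delicate point; I would lean on the explicit ``$R$-map'' orthogonality criterion from \cite{BV} (or its reformulation, cf.\ the use of \cite[Lem 6.2]{FTW} in the $S_N^+$ proof) rather than attempting a bare-hands verification. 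Once the correct test vector is in hand, the rest is the same short Peter--Weyl argument as before.
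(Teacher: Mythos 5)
Your proposal follows essentially the same route as the paper: reduce, via the Peter--Weyl orthogonality computation, to showing that the coefficient vector $\xi$ is not orthogonal to the carrier space $H_{w}$ of $u^{w}$, and then exhibit an explicit vector of $H_{w}$ pairing nontrivially with $\xi$. The paper disposes of the delicate point you flag (producing a test vector genuinely lying in $H_{w}$ for mixed words) simply by quoting the ready-made vectors $v_{i,w}$ from point $(2)$ of the proof of \cite[Thm 6.7]{FTW} and taking $\zeta = v_{i,w_{1}}\otimes v_{j,w_{2}}\otimes\cdots$, which is exactly the ``lean on \cite{BV}/\cite{FTW}'' step you propose.
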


\begin{proof}
The reasoning is the same as for the proof of Lemma \ref{lem:nonvanishing}. Using the notation of the proof of \cite[Theorem 6.7(2)]{FTW} it is sufficient to prove that the vector
\begin{equation*}
\xi = e_{i, w_{1}}\otimes e_{j,w_{2}}\otimes \cdots\otimes e_{j,w_{2k}}\otimes e_{i, w_{2k+1}}
\end{equation*}
is not orthogonal to the range $H_w$ of the projection $P_{w}$ onto the carrier space of the irreducible representation $u^{w}$. Using the proof of \cite[Thm 6.7(2)]{FTW}, one sees that (with the notations therein)
\begin{equation*}
\zeta = v_{i, w_{1}}\otimes v_{j, w_{2}}\otimes \cdots\otimes v_{j, w_{2k}}\otimes v_{i, w_{2k+1}}\in H_{w}
\end{equation*}
and that $\langle \xi, \zeta\rangle \neq 0$. The proof for the even case is similar.
\end{proof}

The last lemma enables us to reduce the problem to a few initial values, namely those given by words of length at most $2$. However, we can even do better. To state this, let us denote, for $w\in W$, by $c(w)$ the sum of its letters modulo $2$.

\begin{prop}\label{prop:recursionhyperoctahedral}
Let $N\geqslant 6$. For any word $w\in W$ there exist coefficients $a_{w}, b_{w}\in \br$ such that for every tracial central functional $\phi: \Pol(H_N^+) \to \bc$  if $c(w) = 0$, then
\begin{equation*}
\phi_{w} = a_{w}\phi_{11} + b_{w}\phi_{0},
\end{equation*}
and if $c(w) = 1$, then
\begin{equation*}
\phi_{w} = a_{w}\phi_{10} + b_{w}\phi_{1}.
\end{equation*}
\end{prop}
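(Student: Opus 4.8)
The plan is to mimic the inductive strategy of Proposition \ref{prop:recursion}, using traciality to express $\phi_w$ in terms of characters indexed by shorter words, together with the non-vanishing guaranteed by Lemma \ref{lem:nonvanishinghyperoctahedral}, and then to reduce the ``length $\leq 2$'' data further using Remark \ref{rem:SNinside}. First I would set up the induction on the length $n$ of $w$. For the base cases $n \leq 2$ one has the words $\emptyset, 0, 1, 00, 01, 10, 11$; for these one invokes Remark \ref{rem:SNinside} (applied to the copy of $\Pol(S_N^+)$ generated by the $u_{ij}^2$, in which $u^0$ is the generating representation and $u^{00}$ the next one) to see that $\phi_\emptyset = d_0/d_1\,\phi_0$ wait --- more precisely $\phi_\emptyset = 1$ is forced only for states, but as a \emph{linear} statement one gets $\phi_{00} = (d_2/d_1)\phi_0$ from Corollary \ref{cor:SN}, and $\phi_{01}, \phi_{10}$ are equal by traciality applied to $u^0 \otimes u^1$ versus $u^1 \otimes u^0$ (the fusion rules are symmetric). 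This pins down all length-$\leq 2$ values as linear combinations of $\phi_{11}, \phi_0$ (in the $c(w)=0$ sector) and of $\phi_{10}, \phi_1$ (in the $c(w)=1$ sector), establishing the base of the induction and defining the constants $a_w, b_w$ there.

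Next, for the inductive step, fix a word $w = w_1 \cdots w_n$ of length $n \geq 3$ and choose three distinct indices $i, j, l$. Following the $S_N^+$ argument, set $A = u_i^{w_1} u_j^{w_2} \cdots u_j^{w_{n-1}}$ and $B = u_i^{w_n}$ when $n$ is odd (so $n = 2k+1$), and $A = u_i^{w_1}\cdots u_j^{w_{n-2}}$, $B = u_l^{w_{n-1}} u_i^{w_n}$ when $n$ is even. Both $AB$ and $BA$ lie in $\Pol(H_N^+)$, and by centrality combined with the formula for $\E$ we may write $\phi(AB) = \sum_\nu h(AB\chi_\nu^*)\phi_\nu$ and similarly for $\phi(BA)$, the sums being finite. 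Because $A$ is a matrix coefficient of $u^{\otimes(n-1)}$ (or $u^{\otimes(n-2)}$) and $B$ of $u^{\otimes 1}$ (or $u^{\otimes 2}$), the products $AB$ and $BA$ are matrix coefficients of $u^{\otimes n}$, so the characters $\chi_\nu$ with $h(AB\chi_\nu^*) \neq 0$ are indexed by words $\nu$ occurring in the $n$-fold tensor power --- and one checks from the fusion rules of $H_N^+$ (the recursion $u^w \otimes u^{w'} = \bigoplus u^{ab} \oplus u^{a\ast b}$) that the \emph{only} length-$n$ word appearing on the $BA$ side is again $w$ itself, essentially because $p_i$-type relations force the pairing structure exactly as in Lemma \ref{lem:nonvanishing}. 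The key input is then Lemma \ref{lem:nonvanishinghyperoctahedral}, which says the coefficient of $\chi_w$ on the relevant side is nonzero; as in Proposition \ref{prop:recursion} one uses that this identity must hold for $\phi = h$ (with $h_w = 0$ for $n \geq 1$) to kill the constant term, obtaining $\phi_w$ as a linear combination, with real coefficients (reality coming from the Weingarten formula for $H_N^+$), of $\phi_\nu$ for words $\nu$ strictly shorter than $n$. Applying the induction hypothesis to each such $\nu$ --- noting that $c(\nu) = c(w)$ is preserved throughout since we only ever add or remove pairs of the form $u_i^{w_a} u_j^{w_b}$ or rearrange via traciality, and $\ast$ respects parity --- gives the claimed formula, with $a_w, b_w$ obtained by collecting terms.

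The main obstacle I expect is the bookkeeping of \emph{which} shorter words actually appear when one expands $h(BA\chi_\nu^*)$ via the fusion rules and the relations $u_i u_{ik} = \delta$-type identities, i.e.\ verifying that no length-$n$ word other than $w$ survives on the $BA$ side and that every surviving shorter word has the same value of $c(\cdot)$ as $w$. In the $S_N^+$ case this was transparent because of the single recursion parameter; here the word-indexed fusion rules and the involution $w \mapsto \overline w$ make the combinatorics more delicate, and one must be careful that the analogue of the ``interval pairing'' argument (``between any two $u_i$'s there is an odd number of terms'') goes through with the exponents $w_a \in \{0,1\}$ --- in particular handling the terms where $w_a = 0$, so that $u_i^{0} = u_i^2$ is a projection in the $S_N^+$ copy, which is precisely why $c(w)$ rather than $n$ is the relevant invariant. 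Once this structural fact is in hand, the rest is the same induction as before. A secondary, purely notational point is to make the choice of $i, j, l$ uniform in $n$ (one can fix, say, $i=2, j=4, l=6$ using $N \geq 6$ exactly as in Lemma \ref{lem:nonvanishing}), so that the resulting coefficients $a_w, b_w$ genuinely depend only on $w$.
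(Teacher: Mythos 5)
Your overall strategy is indeed the paper's: induction on the length of $w$, the same monomials $u_i^{w_1}u_j^{w_2}\cdots u_i^{w_n}$ (with a third index $l$ in the even case) from Lemma \ref{lem:nonvanishinghyperoctahedral}, comparison of $\phi(AB)$ and $\phi(BA)$ via traciality and centrality, the base case for words of length at most two via $\phi_{01}=\phi_{10}$ and Remark \ref{rem:SNinside}, and the remark that the fusion rules preserve the parity $c(\cdot)$. However, the step you yourself defer as ``the main obstacle'' is precisely where the write-up goes wrong and is left unresolved. You claim that ``the only length-$n$ word appearing on the $BA$ side is again $w$ itself''. If that were the situation, Lemma \ref{lem:nonvanishinghyperoctahedral} would not suffice to solve for $\phi_w$: you would need $h(AB\chi_w^*)\neq h(BA\chi_w^*)$ (this is the shape of the $O_N^+$ argument in Proposition \ref{prop:recursionorthogonal}, where an extra evaluation at $\varepsilon$ is needed for exactly this reason), and you give no argument for it.

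What actually happens, and what the paper uses, is simpler: after the cyclic rotation the two adjacent $u_i$-powers merge, $u_i^{w_n}u_i^{w_1}=u_i^{w_n+w_1}$ (using $u_i^3=u_i$ and $(u_i^2)^2=u_i^2$), so $BA$ is a coefficient of a tensor product of only $n-1$ representations, each factor being $u^1$ or $u^{0}\oplus u^{\emptyset}$. Hence by Peter--Weyl orthogonality $h(BA\chi_\nu^*)=0$ for \emph{every} word $\nu$ of length $n$ --- no length-$n$ word survives on that side at all --- and the coefficient of $\phi_w$ in the identity $\phi(AB)=\phi(BA)$ is exactly $h(AB\chi_w^*)\neq 0$, which is what Lemma \ref{lem:nonvanishinghyperoctahedral} provides. (On the $AB$ side the unique length-$n$ component is the concatenation $u^{w}$ by the fusion rules; note also that when some $w_m=0$ the element $AB$ is a coefficient of a product of copies of $u^1$ and $u^{0}\oplus u^{\emptyset}$, not literally of $u^{\otimes n}$.) With this one-line collapsing observation inserted, the rest of your argument --- induction on the strictly shorter words of the same parity, reality of the coefficients via the Weingarten formula, removal of the $\phi_{\emptyset}$-term by evaluating at $h$ (or by traciality of $h$) --- goes through and coincides with the paper's proof; in particular no pairing/Weingarten bookkeeping ``as in Lemma \ref{lem:nonvanishing}'' is needed at this stage.
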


\begin{proof}
We will prove the result by induction on the length of $w$. First note that it holds for words of length at most two. Indeed, in the odd case it suffices to see that by traciality of $\phi$, we have
\begin{align*}
	\phi_{01} & = \phi(\chi_{0}\chi_{1}) - \phi(\chi_{1}) = \phi(\chi_{1}\chi_{0}) - \phi(\chi_{1}) = \phi_{10},
\end{align*}
and in the even case we first note that for any non-empty word $w$ of length at most $2$ with $c(w)=0$ we have $a_{w}, b_{w}, c_{w}\in \br$ 
such that
\begin{equation*}
	\phi_{w} = a_{w}\phi_{11} + b_{w}\phi_{0} + c_{w}\phi_{00}.
\end{equation*}
By Remark \ref{rem:SNinside} we have $\phi_{00} = d_{2}/d_{1}\phi_{0}$, hence the result.

Fix now $n\in \bn$, $n\geqslant 2$ and assume that the result holds for words of length at most $n$. If $n = 2k$ is even, consider a word $w = w_{1}\cdots w_{2k+1}$, choose $i,j\in \{1, \ldots,N\}$, $ i \neq j$, and let $x = u_{i}^{w_{1}}u_{j}^{w_{2}}\cdots u_{j}^{w_{2k}}u_{i}^{w_{2k+1}}$ be as in Lemma \ref{lem:nonvanishinghyperoctahedral}. Then, $\phi(x)$ is a sum of multiples of $\phi_{w'}$ for words $w'$ of length strictly less than $2k+1$ with $c(w') = c(w)$, plus a non-zero multiple of $\phi_{w}$. On the other hand, by traciality we have
\begin{equation*}
\phi(x) = \phi\left(u_{i}^{w_{2k+1}+w_{1}}u_{j}^{w_{2}}\cdots u_{i}^{w_{2k-1}}u_{j}^{w_{2k}}\right).
\end{equation*}
This is a coefficients of a tensor product of $2k$ irreducible representations, hence is a sum of multiples of numbers $\phi_{w'}$ for words $w'$ of length at most $2k$ with $c(w') = c(w)$. Comparing the two expressions yields the result for $w$. If $n = 2k+1$, a similar argument can be applied using the second part of Lemma \ref{lem:nonvanishinghyperoctahedral} instead, yielding the result.
\end{proof}

As before, we will need at some point to use the fact that certain coefficients constructed in the above result  are non-vanishing. This is the content of the next lemma. We will use again a crucial property of the Weingarten formula for $H_{N}^{+}$. The only important thing is that a formula analogous to \eqref{eq:weingartenorthogonal} holds with the set $NC_{2}$ of non-crossing pair partitions replaced by the set of non-crossing partitions with all blocks of even size (see for instance \cite{BS}).

\begin{lem} \label{lem:nonzeroHN}
Let $N\geqslant 6$. We may assume that the coefficients $(a_{w}, b_{w})_{w \in W}$ satisfying the conclusions of Proposition \ref{prop:recursionhyperoctahedral} are such that $a_{111}, b_{111} \neq 0$.
\end{lem}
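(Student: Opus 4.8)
The plan is to follow the same strategy that worked for $S_N^+$ in Lemma \ref{lem:computationa3} and Corollary \ref{lem:nonzerocoefficients}: the coefficients $a_{111}$ and $b_{111}$ arise, via the computation in the proof of Proposition \ref{prop:recursionhyperoctahedral}, as ratios of explicit sums of values of the Haar state on monomials in the diagonal generators, and the task is to rule out the vanishing of the two relevant differences of Haar state values. Concretely, taking $w = 111$ and the indices $i,j$ supplied by Lemma \ref{lem:nonvanishinghyperoctahedral}, comparing $\phi(u_i u_j u_i \chi_{111}^*)$ expanded along the fusion decomposition with the traciality identity $\phi(u_i u_j u_i x) = \phi(x u_i u_j u_i)$ expresses $\phi_{111}$ as a combination $\alpha \phi_{10} + \beta \phi_{1}$ plus lower-order terms which, after subtracting the $S_N^+$-type relations of Remark \ref{rem:SNinside}, collapse to the stated form; here $a_{111}$ and $b_{111}$ are, up to a common non-zero normalising factor (the non-vanishing Haar value $h(u_iu_ju_i\chi_{111}^*)$ from Lemma \ref{lem:nonvanishinghyperoctahedral}), differences of the shape $h(u_i u_j \chi_w) - h(u_i u_j u_i \chi_w)$ for $w$ of length one.

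First I would isolate exactly which Haar-state differences govern $a_{111}$ and $b_{111}$, writing them (after reducing all words of length $\leqslant 2$ via traciality and Remark \ref{rem:SNinside}, as in the base case of Proposition \ref{prop:recursionhyperoctahedral}) in the form analogous to \eqref{a3}--\eqref{b3}. For the coefficient that, under the inclusion $\Pol(S_N^+)\hookrightarrow \Pol(H_N^+)$, corresponds to a quantity already treated in Lemma \ref{lem:computationa3} — namely the one built from words with $c(w)=0$, where the diagonal generators $u_i^0 = u_i^2$ land in the copy of $\Pol(S_N^+)$ — I would simply quote that lemma: the relevant difference $h(u_i^2 u_j^2 \chi) - h(u_i^2 u_j^2 u_i^2 \chi)$ is exactly the $S_N^+$ quantity shown there to be non-zero, using that the Haar state of $H_N^+$ restricts to that of $S_N^+$ on the subalgebra generated by the $u_{ij}^2$. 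For the remaining coefficient, which genuinely involves odd letters and hence the generators $u_i$ themselves rather than their squares, I would run the same Cauchy--Schwarz argument as in Lemma \ref{lem:computationa3}: set $\chi = \sum_k u_k^0 = \chi_0 + 1$ (the $S_N^+$-character), exploit the invariance of $h$ under permutations of indices to reduce all the relevant Haar values to universal constants expressible through powers of $\chi$ and a single ``cross'' term $h(u_i \chi u_i \chi)$ type expression, bound that term by $h(u_i\chi^2)$ via Cauchy--Schwarz, and observe that equality in Cauchy--Schwarz would force a relation of the form $u_i \chi = \chi u_i$ (after normalising the proportionality constant to $1$ by taking norms), which fails in $\Pol(H_N^+)$ since $u_i$ does not commute with $\chi$.

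The main obstacle I expect is purely bookkeeping rather than conceptual: correctly tracking, through the fusion rules of $H_N^+$ (which are combinatorially heavier than those of $O_N^+$, with the $\ast$-operation on words and the two-term contribution $u^{ab}\oplus u^{a\ast b}$), exactly which lower-length words $w'$ with $c(w') = 1$ enter the expansion of $\phi(u_i u_j u_i \chi_{111}^*)$ and of $\phi(u_j u_i u_i u_j)$, so that after using the induction relations of Proposition \ref{prop:recursionhyperoctahedral} for lengths $\leqslant 2$ one is left with a genuine two-term expression $a_{111}\phi_{10} + b_{111}\phi_1$ and can read off $a_{111}, b_{111}$ cleanly. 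There is also the minor point that one should evaluate the needed Haar values $h(\chi^2), h(\chi^3)$ and the analogue of $h(p_i\chi^2)$ in $H_N^+$; but since $\chi$ here is the image of the $S_N^+$-character under the inclusion of Remark \ref{rem:SNinside} and $h_{H_N^+}$ restricts to $h_{S_N^+}$ there, these are exactly the values computed in the proof of Lemma \ref{lem:computationa3}. Once the algebra is organised this way, the non-vanishing of $a_{111}$ follows from the $S_N^+$ computation verbatim and that of $b_{111}$ from the Cauchy--Schwarz/non-commutativity argument, with the hypothesis $N\geqslant 6$ ensuring enough distinct indices $i,j,l$ for Lemma \ref{lem:nonvanishinghyperoctahedral} to apply and for the relation $u_i\chi = \chi u_i$ to genuinely fail.
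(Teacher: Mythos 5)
Your overall framing is right — take $w=111$, compare $\phi(u_iu_ju_i)$ with $\phi(u_ju_i^2)$ via traciality, and read off $a_{111},b_{111}$ as Haar-state expressions divided by the non-zero number $h(u_iu_ju_i\chi_{111}^*)$ from Lemma \ref{lem:nonvanishinghyperoctahedral} — but the argument you give for the non-vanishing of the two coefficients has genuine gaps. First, the reduction of one coefficient to Lemma \ref{lem:computationa3} rests on a parity confusion: in the relation $\phi_{111}=a_{111}\phi_{10}+b_{111}\phi_1$ both lower-order words $10$ and $1$ have $c(w)=1$, so neither coefficient is a difference of $S_N^+$-moments of the type $h(p_ip_j\chi)-h(p_ip_jp_i\chi)$; the quantity treated in Lemma \ref{lem:computationa3} lives in the copy of $\Pol(S_N^+)$ and concerns the character $\chi_0$ of $H_N^+$, which enters only through Remark \ref{rem:SNinside} (the relation $\phi_{00}=(d_2/d_1)\phi_0$) and plays no role in $a_{111},b_{111}$. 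Second, your Cauchy--Schwarz equality-case sketch for the other coefficient is not anchored in the actual quantities that appear: the relevant Haar values involve $\chi_{10}$ and $\chi_1$ of $H_N^+$ and the generators $u_i$ themselves (not the projections $u_i^2$), and it is not clear that the bookkeeping you defer would produce an expression to which the $S_N^+$ trick applies.

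What makes the statement easy — and what your proposal misses — is that in $H_N^+$ the ``differences'' collapse. Viewing $u_iu_ju_i$ as a coefficient of $u^{1}\otimes u^{11}\approx u^{111}\oplus u^{01}\oplus u^{1}$ and invoking Peter--Weyl orthogonality forces $h(u_iu_ju_i\chi_{10}^*)=0$, and the Weingarten formula (non-crossing partitions with even blocks) forces $h(u_iu_ju_i\chi_{1}^*)=0$; hence $\phi(u_iu_ju_i)=h(u_iu_ju_i\chi_{111}^*)\phi_{111}$ exactly. Comparing with $\phi(u_ju_i^2)=h(u_ju_i^2\chi_{10}^*)\phi_{10}+h(u_ju_i^2\chi_1^*)\phi_1$, one may take $a_{111}$ and $b_{111}$ proportional to $h(u_ju_i^2\chi_{10}^*)$ and $h(u_ju_i^2\chi_1^*)$, and these are non-zero respectively by Lemma \ref{lem:nonvanishinghyperoctahedral} and because the Weingarten formula gives $h(u_ju_i^2\chi_1^*)=h(u_j^2u_i^2)>0$ by faithfulness of the Haar state. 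In particular no analogue of Lemma \ref{lem:computationa3} or of the appendix computation is needed here; your plan, as written, replaces this short structural argument by two steps that do not go through in the stated form.
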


\begin{proof}	
Let $i,j\in \{1, \ldots, N\}$, $i \neq j$. Suppose that $\phi$ is a tracial central functional on $\Pol(H_N^+)$. Consider the element $u_i u_j u_i$ used in the proof of Proposition \ref{prop:recursionhyperoctahedral} for $w=111$. Note that we can view $u_i u_j u_i$ as a coefficient of the representation $u^{11} \otimes u^1 \approx u^{111} \oplus u^{10} \oplus u^{1}$.  Hence
\[ \phi (u_i u_j u_i) = h(u_i u_j u_i \chi_{111}^*) \phi_{111} +  h(u_i u_j u_i \chi_{10}^*) \phi_{10} +  h(u_i u_j u_i \chi_{1}^*)  \phi_{1}. \]
As on the other hand we can view 	$u_i u_j u_i$ as a coefficient of the representation $u^{1} \otimes u^{11} \approx u^{111} \oplus u^{01} \oplus u^{1}$, the second factor in the above sum vanishes -- simply as the consequence of the fact that by Woronowicz-Peter-Weyl relations any coefficient of  $u^{111} \oplus u^{01} \oplus u^{1}$ must be orthogonal to $\chi_{10}$. Furthermore, the Weingarten formula implies that the third factor vanishes as well, and we are left with
\[ \phi (u_i u_j u_i) = h(u_i u_j u_i \chi_{111}^*) \phi_{111} .\]
On the other hand $ u_ju_i^2 $ can be viewed as a coefficient of $   u^1 \otimes(u^0 \oplus u^{\emptyset}) \approx u^{10} \oplus u^1 \oplus u^1$, so that 
\begin{equation*}
\phi(u_{j}u_{i}^{2}) = h(u_{j}u_{i}^{2}\chi_{10}^{*})\phi_{10} +  h(u_{j}u_{i}^{2}\chi_{1}^{*})\phi_{1}.
\end{equation*}
Comparison of the last two displayed formulas, and the use of Lemma \ref{lem:nonvanishinghyperoctahedral} shows that it suffices to argue that 
\begin{equation*}
h(u_{j}u_{i}^{2}\chi_{10}^{*}) \neq 0, \;\;\; h(u_{j}u_{i}^{2}\chi_{1}^{*}) \neq 0.
\end{equation*}
The first statement is a direct consequence of Lemma \ref{lem:nonvanishinghyperoctahedral}, while the  Weingarten formula yields $h(u_{j}u_{i}^{2}\chi_{1}^{*}) =  h(u_{j}^{2}u_{i}^{2})\neq 0$ (the last inequality follows from faithfulness of the Haar state on $\Pol(H_{N}^{+})$).
\end{proof}

We are now almost ready to classify all tracial central functionals on $H_{N}^{+}$. Note that we already have three extremal ones at hand, namely the Haar state $h$, the counit $\varepsilon$ and the signed counit $\varepsilon_{\text{alt}}$ (observe that the coefficients of $-\mathrm{Id}\in M_{N}(\C)$ satisfy the defining relations of $\Pol(H_{N}^{+})$). Our main result in this section is that these are the only ones. To prove it, we will need to improve Proposition \ref{prop:recursionhyperoctahedral} to show that $\phi_{w}$ in fact only depends on $\phi_{c(w)}$. The even case turns out to be subtle, hence we establish the relevant statement in a separate lemma.

\begin{lem}\label{lem:evencase}
Let $N\geqslant 6$. Then, for any tracial central functional $\phi$ on $H_{N}^{+}$, $\phi_{11} = N\phi_{0}$.
\end{lem}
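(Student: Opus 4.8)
The plan is to mimic the convolution-semigroup argument from the proof of Theorem~\ref{thm:classificationpermutations}. First I would produce one linear relation, valid for \emph{every} tracial central functional $\phi$ on $H_N^+$, involving only $\phi_{110}$, $\phi_{11}$ and $\phi_{0}$, and in which the coefficients of $\phi_{11}$ and of $\phi_{0}$ are both nonzero. Granting this, put $\phi_t=\exp_\star(t(\phi-\varepsilon))$; by Lemma~\ref{lem:conv} these are tracial central with $\phi_t(\chi_w)=d_w e^{t\lambda_w}$ where $\lambda_w=\phi_w/d_w-1$, and the relation gives a vanishing linear combination of the functions $t\mapsto e^{t\lambda_{110}},e^{t\lambda_{11}},e^{t\lambda_0}$ with the coefficients of $e^{t\lambda_{11}}$ and $e^{t\lambda_0}$ nonzero; linear independence of exponentials then forces $\lambda_{11}=\lambda_0$, i.e.\ $\phi_{11}/d_{11}=\phi_0/d_0$, which is exactly $\phi_{11}=N\phi_0$ since $d_{11}=N(N-1)$ and $d_0=N-1$. (One also needs $\lambda_{00}=\lambda_0$ for each $\phi_t$, but this is immediate from Remark~\ref{rem:SNinside} and Corollary~\ref{cor:SN}.)

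To obtain the relation I would take $i\neq j$ (say $1,2$) and compare, using traciality of $\phi$ and $u_1^3=u_1$,
\[
\phi(u_1u_2u_1^{2})=\phi(u_1^{3}u_2)=\phi(u_1u_2),
\]
expanding both sides through the central conditional expectation $\E$ and the fusion rules: $u_1u_2$ is a matrix coefficient of $u^{1}\otimes u^{1}=u^{11}\oplus u^{0}\oplus u^{\emptyset}$, while $u_1u_2u_1^{2}$ is a matrix coefficient of $u^{1}\otimes u^{1}\otimes(u^{0}\oplus u^{\emptyset})=u^{110}\oplus 2u^{11}\oplus u^{00}\oplus 3u^{0}\oplus 2u^{\emptyset}$. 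The $H_N^+$ Weingarten formula gives $h(u_1u_2)=0$, $h(\chi_0^{*}u_1u_2)=0$ and $h(u_1u_2u_1^{2})=h(u_1^{3}u_2)=0$, so the $\phi_{\emptyset}$-terms drop out and, after substituting $\phi_{00}=(d_{00}/d_0)\phi_0$ (Remark~\ref{rem:SNinside}), the equality becomes
\[
h(\chi_{110}^{*}u_1u_2u_1^{2})\,\phi_{110}+\bigl(h(\chi_{11}^{*}u_1u_2u_1^{2})-h(\chi_{11}^{*}u_1u_2)\bigr)\phi_{11}+\Bigl(\tfrac{d_{00}}{d_0}h(\chi_{00}^{*}u_1u_2u_1^{2})+h(\chi_{0}^{*}u_1u_2u_1^{2})\Bigr)\phi_{0}=0,
\]
valid for all tracial central $\phi$. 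The coefficient of $\phi_{110}$ is nonzero by Lemma~\ref{lem:nonvanishinghyperoctahedral} (applied to $w=110$), so everything reduces to showing that the coefficient of $\phi_{11}$ and that of $\phi_{0}$ do not vanish.

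This last non-vanishing is the even-word analogue of Lemma~\ref{lem:nonzeroHN} and is where the real difficulty lies. The coefficient of $\phi_{11}$ equals $h\bigl(\chi_{11}^{*}u_1u_2(u_1^{2}-1)\bigr)$, and for this I would attempt a Cauchy--Schwarz argument in the spirit of Lemma~\ref{lem:computationa3}: vanishing would force an equality case in a Cauchy--Schwarz estimate, hence an algebraic identity in $\Pol(H_N^+)$ (of the type $u_1\chi=\chi u_1$) which patently fails. For the coefficient of $\phi_{0}$ I expect no such shortcut --- exactly as for $S_N^+$, where the non-vanishing of the ``$\phi_2$-coefficient'' needed the appendix --- so one has to compute the relevant degree-six values of the Haar state directly from the Weingarten formula of $H_N^+$ (the one over non-crossing partitions all of whose blocks have even size) and verify that $\tfrac{d_{00}}{d_0}h(\chi_{00}^{*}u_1u_2u_1^{2})+h(\chi_{0}^{*}u_1u_2u_1^{2})\neq 0$. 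That explicit partition computation is the main obstacle; the rest is the fusion-rule bookkeeping above together with the by-now-standard convolution-semigroup trick.
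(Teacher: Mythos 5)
Your plan fails at the decisive point: in the relation you extract from $\phi(u_1u_2u_1^{2})=\phi(u_1u_2)$, the coefficient of $\phi_{0}$ is not merely hard to control --- it is identically zero. Indeed, $\chi_{0}$ and $\chi_{00}$ are polynomials in $\chi_{0}+1=\sum_{k}u_{k}^{2}$, so every Haar value entering your $\phi_0$-coefficient is of the form $h\bigl(u_1u_2u_1^{2}\,u_{k_1}^{2}\cdots u_{k_r}^{2}\bigr)$; in such a monomial the index $2$ occurs an odd number of times, and since the Weingarten formula for $H_N^+$ only involves partitions all of whose blocks have even size, every such moment vanishes. (These are exactly the vanishing computations $h(u_iu_ju_i^{2}\chi_{0}^{*})=h(u_iu_ju_i^{2}\chi_{00}^{*})=0$ carried out in the paper's own proof.) Consequently your relation collapses to $c_0\phi_{110}+c_1\phi_{11}=0$, which after evaluating at the counit gives $\phi_{110}=(N-2)\phi_{11}$ and nothing more; feeding it into the convolution-semigroup trick only yields $\lambda_{110}=\lambda_{11}$ and gives no link whatsoever between $\phi_{11}$ and $\phi_{0}$. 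Incidentally, your worry about proving the $\phi_{11}$-coefficient nonzero is also misplaced: once the $\phi_{110}$-coefficient is nonzero (Lemma~\ref{lem:nonvanishinghyperoctahedral}), the ratio in any such universal relation is read off by plugging in $\varepsilon$, which is how the paper avoids all hard non-vanishing computations in the even case.

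The missing ingredients are two further inputs used by the paper, which make the argument purely algebraic (no semigroup trick is needed for this lemma). First, applying $\phi$ to the character identities for $\chi_{1}\chi_{0}\chi_{1}$ and $\chi_{1}\chi_{1}\chi_{0}$ and using traciality gives $\phi_{101}-\phi_{0}-\phi_{00}=\phi_{110}-\phi_{11}$. Second, the analogous monomial computation with $u_iu_j^{2}u_i$ (a coefficient of $u^{1}\otimes u^{0}\otimes u^{1}$), where this time the $\chi_{11}$-term drops out by the same parity argument, shows that $\phi_{101}$ is a universal multiple of $\phi_{0}$ (the constant being $\varepsilon(\chi_{101})/\varepsilon(\chi_{0})$), with $\phi_{00}$ handled by Remark~\ref{rem:SNinside}. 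Combining these with $\phi_{110}=(N-2)\phi_{11}$ in the displayed relation forces $\phi_{11}$ to be proportional to $\phi_{0}$, and evaluating at the counit gives the constant $N$. Without the relation coming from $\chi_{1}\chi_{0}\chi_{1}$ versus $\chi_{1}\chi_{1}\chi_{0}$ and the $u_iu_j^{2}u_i$ computation, your single relation cannot reach the statement.
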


\begin{proof}
The proof will proceed in three steps. The first one consists in relating $\phi_{11}$ with $\phi_{0}$ with the help of $\phi_{101}$ and $\phi_{110}$. To do this, observe that
\begin{equation*}
\chi_{1}\chi_{0}\chi_{1} = \chi_{101} + 2\chi_{11} + \chi_{0} + 1
\end{equation*}
and
\begin{equation*}
\chi_{1}\chi_{1}\chi_{0} = \chi_{110} + \chi_{00} + 2\chi_{0} + \chi_{11} + 1.
\end{equation*}
Applying $\phi$ and using traciality, this leads to
\begin{equation}\label{eq:relationlengththree}
\phi_{101} - \phi_{0} - \phi_{00} = \phi_{110} - \phi_{11}.
\end{equation}

The second step is now to show that $\phi_{101}$ is in fact a combination of $\phi_{0}$ and $\phi_{00}$. Let us consider the element $u_{i}u_{j}^{2}u_{i}$, $i,j\in \{1, \ldots,N\}$, $ i \neq j$. It is a coefficient of $u^{1}\otimes u^{0}\otimes u^{1}$, hence
\begin{equation*}
\phi(u_{i}u_{j}^{2}u_{i}) = h(u_{i}u_{j}^{2}u_{i}\chi_{101}^{*})\phi_{101} + h(u_{i}u_{j}^{2}u_{i}\chi_{11}^{*})\phi_{11} + h(u_{i}u_{j}^{2}u_{i}\chi_{0}^{*})\phi_{0} + h(u_{i}u_{j}^{2}u_{i}).
\end{equation*}
Now, observe that
\begin{equation*}
\sum_{k, l = 1}^{N}u_{k}u_{l} = \chi_{1}\chi_{1} = \chi_{11} + \chi_{0} + \chi_{\emptyset}.
\end{equation*}
Since the sum of the last two characters is the sum of the coefficients $u_{k}^{2}$, we see that $\chi_{11}$ is the sum of $u_{k}u_{l}$ for $k,l\in \{1, \ldots,N\}$, $k\neq l$, hence
\begin{align*}
h(u_{i}u_{j}^{2}u_{i}\chi_{11}^{*}) & = \sum_{k\neq l}h(u_{i}u_{j}^{2}u_{i}u_{k}u_{l}) = 0
\end{align*}
by the Weingarten formula (remember this only involves non-crossing partitions with blocks of even size). In other words, the  term corresponding to $\phi_{11}$ in the equality above disappears. Moreover, by traciality, $\phi(u_{i}u_{j}^{2}u_{i})$ equals
\begin{equation*}
\phi(u_{i}^{2}u_{j}^{2}) = h(u_{i}^{2}u_{j}^{2}\chi_{00}^{*})\phi_{00} + h(u_{i}^{2}u_{j}^{2}\chi_{0}^{*})\phi_{0} + h(u_{i}^{2}u_{j}^{2}).
\end{equation*}
Combining the two equalities, we find $\alpha, \beta\in \br$ such that
\begin{equation*}
h(u_{i}u_{j}^{2}u_{i}\chi_{101}^{*})\phi_{101} = \alpha\phi_{00} + \beta\phi_{0}.
\end{equation*}
By Lemma \ref{lem:nonvanishinghyperoctahedral}, $h(u_{i}u_{j}^{2}u_{i}\chi_{101}^{*})\neq 0$ so that we can divide by it. Moreover, by Remark \ref{rem:SNinside}, we know that $\phi_{00}$ is a fixed multiple of $\phi_{0}$. As a conclusion, $\phi_{101}$ is a multiple of $\phi_{0}$. Since the multiplicative factor does not depend on $\phi$, we can compute it using the counit to conclude that
\begin{equation*}
\phi_{101} = \frac{\varepsilon(\chi_{101})}{\varepsilon(\chi_{0})}\phi_{0} = \frac{N(N^{2} - 3N + 1)}{N-1}\phi_{0},
\end{equation*}
where we use \cite[Theorem 9.3]{BV}.

The third step is similar to the second one, since we now show that $\phi_{110}$ is a multiple of $\phi_{11}$. This is done by using the monomial $u_{i}u_{j}u_{i}^{2}$, again for $i,j \in\{1, \ldots,N\}$, $i \neq j$.. This time, we have
\begin{equation*}
\phi(u_{i}u_{j}u_{i}^{2}) = h(u_{i}u_{j}u_{i}^{2}\chi_{110}^{*})\phi_{110} + h(u_{i}u_{j}u_{i}^{2}\chi_{11}^{*})\phi_{11} + h(u_{i}u_{j}u_{i}^{2}\chi_{00}^{*})\phi_{00} + h(u_{i}u_{j}u_{i}^{2}\chi_{0}^{*})\phi_{0} + h(u_{i}u_{j}u_{i}^{2}).
\end{equation*}
We first observe that
$h(u_{i}u_{j}u_{i}^{2}) = 0$ by the Weingarten formula,
\begin{align*}
h(u_{i}u_{j}u_{i}^{2}\chi_{0}^{*}) & = h(u_{i}u_{j}u_{i}^{2}(\chi_{0} + 1)^{*}) = \sum_{k=1}^{N}h(u_{i}u_{j}u_{i}^{2}u_{k}^{2}) = 0
\end{align*}
and
\begin{align*}
h(u_{i}u_{j}u_{i}^{2}\chi_{00}^{*}) & = h(u_{i}u_{j}u_{i}^{2}(\chi_{00} + \chi_{0} + 1)^{*}) = \sum_{k, l=1}^{N}h(u_{i}u_{j}u_{i}^{2}u_{k}^{2}u_{l}^{2}) = 0.
\end{align*}
Using traciality and the fact that $u_{i}^{3} = u_{i}$, we also have
\begin{align*}
\phi(u_{i}u_{j}u_{i}^{2}) & = \phi(u_{i}u_{j}) = h(u_{i}u_{j}\chi_{11}^{*})\phi_{11}.
\end{align*}
It readily follows from the Weingarten formula that $h(u_{i}u_{j}\chi_{11}^{*}) = h(u_{i}u_{j}u_{j}u_{i})$, so that comparing the two expressions leads to
\begin{equation*}
h(u_{i}u_{j}u_{i}^{2}\chi_{110}^{*})\phi_{110} = [h(u_{i}u_{j}u_{j}u_{i}) - h(u_{i}u_{j}u_{i}^{2}u_{j}u_{i})]\phi_{11}.
\end{equation*}
By Lemma \ref{lem:nonvanishinghyperoctahedral}, $h(u_{i}u_{j}u_{i}^{2}\chi_{110}^{*})\neq 0$ and we can divide to get $\phi_{110} = \delta\phi_{11}$ for some $\delta \in \br$. Once again, applying this equality to the counit yields $\delta = N-2$. Summing up, Equation \eqref{eq:relationlengththree} now reads $\gamma\phi_{0} = (\delta-1)\phi_{11}$ and since $\delta\neq 1$, we have that  $\phi_{11}$  is indeed proportional to $\phi_{0}$. Using the counit we find that the proportionality constant is $d_{11}/d_{0} = N$.
\end{proof}

We are now ready for the complete classification.

\begin{thm}\label{thm:classifHyperoctahedral}
For $N\geqslant 6$, any tracial central state on $H_{N}^{+}$ is a convex combination of $h$, $\varepsilon$ and $\varepsilon_{\text{alt}}$.
\end{thm}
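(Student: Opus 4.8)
The plan is to mimic the structure of the proof of Theorem~\ref{thm:classificationpermutations}, now that the preparatory work (Proposition~\ref{prop:recursionhyperoctahedral}, Lemma~\ref{lem:nonzeroHN}, Lemma~\ref{lem:evencase}) has set up the analogue of the two-parameter reduction. The key observation is that Lemma~\ref{lem:evencase} already collapses the even case: $\phi_{11} = N\phi_0 = (d_{11}/d_0)\phi_0$ for any tracial central functional $\phi$, so up to the correct normalization $\phi_w$ for $c(w)=0$ is pinned down by $\phi_0$ (equivalently by $\phi_{11}$). It remains to collapse the odd case, i.e.\ to show that $\phi_{10}$ is determined by $\phi_1$, and the natural tool for this is the convolution exponential trick of Lemma~\ref{lem:conv}, exactly as in the $S_N^+$ argument.

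Concretely, first I would apply Lemma~\ref{lem:nonzeroHN} to fix coefficients with $a_{111},b_{111}\neq 0$, so that $\phi_{111} = a_{111}\phi_{10} + b_{111}\phi_1$ holds for every tracial central functional, in particular for $\phi_t := \exp_\star(t(\phi-\varepsilon))$ for all $t\geq 0$. Writing $\lambda_w = \phi(\chi_w)/d_w - 1$ and invoking Lemma~\ref{lem:conv}(iv), the relation becomes $d_{111}e^{t\lambda_{111}} = a_{111}d_{10}e^{t\lambda_{10}} + b_{111}d_1 e^{t\lambda_1}$ for all $t\geq 0$. Linear independence of distinct exponentials, together with $a_{111},b_{111}\neq 0$, forces $\lambda_{10}=\lambda_1=\lambda_{111}$; in particular $\phi_{10} = (d_{10}/d_1)\phi_1$. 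Combined with Lemma~\ref{lem:evencase} and Proposition~\ref{prop:recursionhyperoctahedral}, this yields $\phi_w = (d_w/d_{c(w)})\phi_{c(w)}$ for every word $w$, so $\phi$ is completely determined by the two numbers $\phi_0$ (the value on the generating representation of $S_N^+$) and $\phi_1$. Equivalently, $\phi$ is a linear combination of the three central functionals $h$, $\tfrac12(\varepsilon+\varepsilon_{\mathrm{alt}})$ and $\tfrac12(\varepsilon-\varepsilon_{\mathrm{alt}})$: indeed $h$ kills all nontrivial characters, $\tfrac12(\varepsilon+\varepsilon_{\mathrm{alt}})$ picks out the words $w$ with $c(w)=0$ (those being the even tensor powers), and $\tfrac12(\varepsilon-\varepsilon_{\mathrm{alt}})$ those with $c(w)=1$, so one can solve for the coefficients by evaluating on $\chi_\emptyset$, $\chi_0$ and $\chi_1$.

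Assuming now that $\phi$ is a state, the final step is to check that the three coefficients are nonnegative (they automatically sum to $1$ since evaluating at $\chi_\emptyset = \mathbf 1$ gives $1$). As in Theorem~\ref{thm:ON+}, boundedness of $\phi$ and $\|\chi_{11}\| = d_{11}$ give $|\phi_{11}|\leq d_{11}$, controlling the $h$-coefficient; and Cauchy--Schwarz estimates on $\phi(\chi_{1^n}^2)$ and $\phi(\chi_{1^{n+1}}\chi_{1^n}^*)$, using the fusion rules of $H_N^+$ to expand these products into characters $\chi_w$ with $c(w)$ of a fixed parity and letting $n\to\infty$, will force $|\phi_1/d_1|\leq \phi_0/d_0$, i.e.\ the positivity of the remaining two coefficients. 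Since the $H_N^+$ fusion rules are a mild elaboration of those of $O_N^+$ (now recorded by words rather than integers), these estimates should go through essentially verbatim, the only care being to track which characters appear in the relevant products.

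The main obstacle I anticipate is the bookkeeping in this last positivity step: unlike $O_N^+$, where $\chi_n^2 = \sum_{k\le n}\chi_{2k}$ cleanly, the $H_N^+$ fusion rule $u^w\otimes u^{w'} = \sum_{w=az,\,w'=\bar z b}(u^{ab}\oplus u^{a\ast b})$ produces a more complicated sum of characters, so one must verify that the dimensions of the characters appearing in $\phi(\chi_{1^n}^2)$ with $c(w)=0$ still grow fast enough, and those in $\phi(\chi_{1^{n+1}}\chi_{1^n}^*)$ with $c(w)=1$ grow comparably, for the limiting inequality to pinch. One clean way to avoid reproving growth estimates from scratch is to note that both $h$-coefficient and $\varepsilon_{\mathrm{alt}}$-related coefficients can alternatively be controlled by restricting $\phi$ to the copy of $\Pol(O_N^+)$ sitting inside $\Pol(H_N^+)$ (via the subgroup $O_N^+ \subset H_N^+$, or by using the quotient to $S_N^+$ already invoked in Remark~\ref{rem:SNinside}) and quoting Theorem~\ref{thm:ON+}; this is likely the slickest route and I would pursue it if the direct Cauchy--Schwarz computation proves unwieldy.
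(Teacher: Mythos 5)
Your proposal is correct and follows essentially the same route as the paper: even-parity words are collapsed via Proposition~\ref{prop:recursionhyperoctahedral} and Lemma~\ref{lem:evencase}, odd-parity words via the convolution-exponential trick with $a_{111},b_{111}\neq 0$ from Lemma~\ref{lem:nonzeroHN}, and positivity of the coefficients via the norm bound $|\phi_{11}|\leqslant d_{11}$, the restriction to the copy of $\Pol(S_{N}^{+})$ (giving $\mu\geqslant 0$), and the Cauchy--Schwarz estimate on the characters $\chi_{1^{n}}$ exploiting parity of the fusion rules, exactly as in the paper. The only slip is in your optional fallback: $O_{N}^{+}$ is not a quantum subgroup of $H_{N}^{+}$ (rather $\Pol(H_{N}^{+})$ is a quotient of $\Pol(O_{N}^{+})$, so there is no embedded copy of $\Pol(O_{N}^{+})$ to restrict to), but this is immaterial since your main argument, which coincides with the paper's, goes through.
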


\begin{proof}
As explained above, we need to show that for each $w \in W$ the value $\phi_{w}$ is a fixed multiple of $\phi_{c(w)}$. For $c(w) = 0$ this follows already from Proposition \ref{prop:recursionhyperoctahedral} and Lemma \ref{lem:evencase}. 

For $c(w) = 1$, we can use the ``semi-group trick'' as in the proof of Theorem \ref{thm:classificationpermutations}. By Lemma \ref{lem:nonzeroHN} we have $a_{111}, b_{111} \neq 0$. This, via Lemma \ref{lem:conv} (iv) means that setting $\lambda_{w} = \phi_{w}/d_{w} - 1$ and $L_{\phi} = \phi - \varepsilon$, considering the convolution semi-group associated to $L_{\phi}$ yields for all $t > 0$
\begin{equation*}
d_{111}e^{\lambda_{111}t} = a_{111} d_{10}e^{\lambda_{10}t} + b_{111}d_{111} e^{\lambda_{1}t}.
\end{equation*}
Therefore $\phi_{10} = (d_{10}/d_{1})\phi_{1}$.

Using this, the same reasoning as in the proof of Theorem \ref{thm:ON+} shows that for any tracial central state,
\begin{equation*}
\phi = \left(1 - \mu\right)h + \frac{1}{2}\left(\mu + \lambda\right)\varepsilon + \frac{1}{2}\left(\mu - \lambda\right)\varepsilon_{\mathrm{alt}}.
\end{equation*}
with $\mu\leqslant 1$. We are therefore left with proving that $\vert \lambda\vert \leqslant\mu$. The first step is to observe that $\phi_{\mid _{\Pol (S_{N}^{+})}} = (1-\mu)h + \mu\varepsilon$, so that $\mu\geqslant 0$. The second one is to consider the characters $\chi_{1^{n}}$. Indeed, we have by parity
\begin{equation*}
\phi(\chi_{1^{n+1}}\chi_{1^{n}}) = \lambda d_{1^{n+1}}d_{1^{n}}
\end{equation*}
while $\phi(\chi_{1^{n}}^{2}) = \mu d_{1^{n}}^{2}$ so that the same argument (involving the Cauchy-Schwarz inequality) as in the end of the proof of Theorem \ref{thm:ON+} yields $\lambda^{2}\leqslant \mu^{2}$. 
\end{proof}

As before, the proof of the last theorem in fact describes also all central linear functionals on $\Pol(H_N^+)$.
\begin{cor}\label{cor:HN}
Let $N \geq 6$.	Any tracial central linear functional on $\Pol(H_{N}^{+})$ is a linear combination of $\varepsilon$, $\varepsilon_{alt}$  and $h$, and therefore extends to a bounded functional on $C^{u}(H_{N}^{+})$.
\end{cor}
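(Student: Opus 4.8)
The argument mirrors that of Corollary~\ref{cor:SN}: we revisit the proof of the preceding theorem and observe that positivity of the functional is invoked only at its very end, to place the coefficients of the decomposition inside the prescribed convex region, and plays no role in establishing that an arbitrary tracial central functional is \emph{some} linear combination of $\varepsilon$, $\varepsilon_{\mathrm{alt}}$ and $h$. So the plan is to record that the reduction leading to that decomposition is positivity-free, and then conclude by a dimension count.

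Fix a tracial central linear functional $\phi\colon\Pol(H_{N}^{+})\to\bc$. The first point is that Proposition~\ref{prop:recursionhyperoctahedral}, Lemma~\ref{lem:evencase} and Lemma~\ref{lem:nonzeroHN} are all proved for tracial central functionals, using only traciality, centrality, the fusion rules and the Weingarten formula, together with Corollary~\ref{cor:SN} (via Remark~\ref{rem:SNinside}) --- and Corollary~\ref{cor:SN} is itself a statement about linear functionals. Thus from Proposition~\ref{prop:recursionhyperoctahedral} and Lemma~\ref{lem:evencase} one obtains that for every non-empty $w\in W$ with $c(w)=0$ the value $\phi_{w}$ is a fixed (i.e.\ $\phi$-independent) scalar multiple of $\phi_{0}$. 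For words with $c(w)=1$ one runs the convolution semi-group trick exactly as in the proof of the theorem: with $L_{\phi}=\phi-\varepsilon$ and $\lambda_{w}=\phi_{w}/d_{w}-1$, Lemma~\ref{lem:conv}(iii)--(iv) gives, for all $t>0$,
\[
d_{111}e^{\lambda_{111}t}=a_{111}e^{\lambda_{10}t}+b_{111}e^{\lambda_{1}t},
\]
and since $a_{111},b_{111}\neq 0$ by Lemma~\ref{lem:nonzeroHN} the exponents must coincide, whence $\phi_{10}=(d_{10}/d_{1})\phi_{1}$ and, by Proposition~\ref{prop:recursionhyperoctahedral}, $\phi_{w}$ is a fixed scalar multiple of $\phi_{1}$ for every $w$ with $c(w)=1$. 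The one point needing care is that Lemma~\ref{lem:conv}(iv) is stated for central \emph{states}; but its proof uses only that $\widetilde{\phi}=\phi-\varepsilon$ is central --- so that $\widetilde{\phi}(U_{ii}^{\alpha})$ does not depend on $i$ --- and therefore applies verbatim to any central functional, as does part~(iii).

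Combining these, $\phi$ is completely determined by the triple $(\phi_{\emptyset},\phi_{0},\phi_{1})$ (where $\phi_{\emptyset}=\phi(\mathbf{1})$), so the linear map sending a tracial central functional to this triple is injective and the space of such functionals is at most three-dimensional. Since $\varepsilon$, $\varepsilon_{\mathrm{alt}}$ and $h$ are three tracial central functionals whose values on $\chi_{\emptyset}=\mathbf{1}$, $\chi_{0}$ and $\chi_{1}$ form an invertible matrix, they are linearly independent and hence span this space. Concretely, putting $\mu=\phi_{0}/d_{0}$ and $\lambda=\phi_{1}/d_{1}$, the same linear-algebra computation as in the proof of Theorem~\ref{thm:ON+} gives
\[
\phi=(\phi_{\emptyset}-\mu)\,h+\tfrac12(\mu+\lambda)\,\varepsilon+\tfrac12(\mu-\lambda)\,\varepsilon_{\mathrm{alt}},
\]
both sides being tracial central functionals that agree on $\mathbf{1}$, $\chi_{0}$ and $\chi_{1}$. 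Finally, each of $\varepsilon$, $\varepsilon_{\mathrm{alt}}$ and $h$ is a state (the first two being characters), hence bounded on $C^{u}(H_{N}^{+})$, so $\phi$ extends continuously as well. The only genuine obstacle is bookkeeping: one must verify that no step in the reduction to three parameters used positivity --- in particular that the semi-group computation of Lemma~\ref{lem:conv}(iii)--(iv) is positivity-free, which it is.
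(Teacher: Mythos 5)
Your proposal is correct and follows essentially the same route as the paper, which deduces the corollary by observing that the reduction of a tracial central functional to the three parameters $\phi(\mathbf{1})$, $\phi_{0}$, $\phi_{1}$ (via Proposition~\ref{prop:recursionhyperoctahedral}, Lemma~\ref{lem:evencase}, Lemma~\ref{lem:nonzeroHN} and the convolution semigroup trick) never uses positivity, positivity entering only in the final convexity estimates. Your explicit check that Lemma~\ref{lem:conv}(iii)--(iv) is positivity-free is exactly the point the paper relies on implicitly (as in Corollary~\ref{cor:SN}), so nothing is missing.
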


\begin{rmk}
One can likely use the same strategy to prove that the only extremal tracial central states on $H_{N}^{s+}$ are either the Haar state or the evaluation at the diagonal matrix with constant coefficient given by an $s$-th root of unity.
\end{rmk}

\renewcommand*{\thesection}{\appendixname}

\appendix
\setcounter{section}{-1}

\section{Computations with the Haar state on quantum permutation groups}
\setcounter{section}{1}

\renewcommand*{\thesection}{\Alph{section}}
\renewcommand*{\thesubsection}{\Alph{section}.\arabic{subsection}}


We have gathered in this appendix some computations involving the Haar state on $S_{N}^{+}$ which were used in Section \ref{sec:permutation}. The reason for that is that these computations rely on techniques which have nothing to do with the main subject of this paper, and are rather lengthy and technical. The authors are thankful to Roland Speicher for discussions on this topic which led to the development of the method.

Before embarking on the details of the proof, let us describe the main tool that we will use. We will follow the notations of Section \ref{sec:permutation} and assume throughout that $N \geq 4$. The problem is that of computing a general moment
\begin{equation*}
 h(p_{i_{1}j_{1}} \cdots p_{i_{k}j_{k}}),
\end{equation*}
for arbitrary $k \in \bn$ and $i_1, \ldots,i_k, j_1, \ldots, j_k \in \{1, \ldots N\}$.
Because $S_{N}^{+}$ contains $S_{N}$, $h$ is invariant under permutations of the first set of indices and permutations of the second set of indices independently. As a consequence, the moment above only depends on which indices in $i = (i_{1}, \cdots, i_{k})$ are equal, and similarly for $j = (j_{1}, \cdots, j_{k})$. In other words, denoting by $\ker(i)$ the partition whose blocks are given by the indices having the same value in $i$, the moment  depends only on the partitions $\pi = \ker(i)$ and $\pi' = \ker(j)$. We will therefore denote such a moment by
\begin{equation*}
h(\pi, \pi').
\end{equation*}

We will not compute explicitly all these numbers, but rather find relations between them. For convenience, we fix some notations:
\begin{align*}
\pi_{4}^{1} & = \{\{1, 3\}, \{2, 4\}\} \\
\pi_{4}^{2} & = \{\{1, 3\}, \{2\}, \{4\}\} \\
\pi_{4}^{2\prime} & = \{\{1\}, \{2, 4\}, \{3\}\} \\
\pi_{4}^{3} & = \{\{1\}, \{2\}, \{3\}, \{4\}\} \\
\pi_{5}^{1} & = \{\{1, 3\}, \{2, 4\}, \{5\}\} \\
\pi_{5}^{2} & = \{\{1, 3\}, \{2, 5\}, \{4\}\} \\
\pi_{5}^{3} & = \{\{1, 3\}, \{2\}, \{4\}, \{5\}\}
\end{align*}

Here is a first example of the kind of relations one can obtain between these values of the Haar state.

\begin{lem}\label{lem:orderfour}
We have
\begin{align*}
h(\pi_{4}^{2}, \pi_{4}^{2}) & = \frac{1}{N(N-1)(N-2)} - \frac{1}{N-2}h(\pi_{4}^{2}, \pi_{4}^{1}), \\
h(\pi_{4}^{2}, \pi_{4}^{1}) & = \frac{1}{N(N-1)(N-2)} - \frac{1}{N-2}h(\pi_{4}^{1}, \pi_{4}^{1}).
\end{align*}
\end{lem}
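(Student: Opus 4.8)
The plan is to derive both identities from one auxiliary order-three computation together with the elementary ``summation trick'': represent the moment in question by a concrete monomial, sum one of its row or column indices using $\sum_{i}p_{ij}=1$ or $\sum_{j}p_{ij}=1$, and read off the kernels of the specialised monomials that appear on the other side. Throughout I would use the $S_{N}\times S_{N}$-invariance of $h$ (permuting rows, resp.\ columns, separately), so that every moment depends only on the pair of kernels, together with the relations $p_{ij}p_{il}=\delta_{jl}p_{ij}$, $p_{ij}p_{lj}=\delta_{il}p_{ij}$, $p_{ij}^{2}=p_{ij}$ and $h(p_{ij})=1/N$. The first step is to show that for $a\neq c$ and $b\neq d$ one has $h(p_{ab}p_{cd}p_{ab})=\tfrac{1}{N(N-1)}$: summing over all $c,d$ gives $h\big(p_{ab}(\sum_{c,d}p_{cd})p_{ab}\big)=N\,h(p_{ab}^{2})=N\,h(p_{ab})=1$; subtracting the contribution of $c=a$ (via $p_{ab}p_{ad}=\delta_{bd}p_{ab}$) and of $d=b$ (via $p_{ab}p_{cb}=\delta_{ac}p_{ab}$) and adding back the doubly degenerate term $h(p_{ab}^{3})=1/N$ leaves $1-1/N$ spread over the $(N-1)^{2}$ equal terms with $c\neq a$, $d\neq b$, whence the claimed value.

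For the first relation I would fix three distinct indices $a,c,f$ and $b\neq d$, consider $M(g)=h(p_{ab}p_{cd}p_{ab}p_{fg})$, and sum over $g$: by Step~1, $\sum_{g}M(g)=h(p_{ab}p_{cd}p_{ab})=\tfrac{1}{N(N-1)}$. On the left-hand side, for the $N-2$ values $g\notin\{b,d\}$ the column indices $(b,d,b,g)$ have kernel $\pi_{4}^{2}$ and the row indices $(a,c,a,f)$ have kernel $\pi_{4}^{2}$, so each such term equals $h(\pi_{4}^{2},\pi_{4}^{2})$; the term $g=b$ vanishes because $p_{ab}p_{fb}=\delta_{af}p_{ab}=0$ (here $a\neq f$ is exactly the non-coincidence forced by the singleton block $\{4\}$ of $\pi_{4}^{2}$); and the term $g=d$ has column indices $(b,d,b,d)$ of kernel $\pi_{4}^{1}$, hence equals $h(\pi_{4}^{2},\pi_{4}^{1})$. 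This gives $(N-2)h(\pi_{4}^{2},\pi_{4}^{2})+h(\pi_{4}^{2},\pi_{4}^{1})=\tfrac{1}{N(N-1)}$, which is the first displayed equality after dividing by $N-2$.

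For the second relation I would instead fix $a\neq c$, $b\neq d$, consider $M'(f)=h(p_{ab}p_{cd}p_{ab}p_{fd})$, and sum over $f$, using $\sum_{f}p_{fd}=1$ to get $\sum_{f}M'(f)=h(p_{ab}p_{cd}p_{ab})=\tfrac{1}{N(N-1)}$ again. For the $N-2$ values $f\notin\{a,c\}$ the row indices $(a,c,a,f)$ have kernel $\pi_{4}^{2}$ and the column indices $(b,d,b,d)$ have kernel $\pi_{4}^{1}$, giving $h(\pi_{4}^{2},\pi_{4}^{1})$; the term $f=a$ vanishes since $p_{ab}p_{ad}=\delta_{bd}p_{ab}=0$; and the term $f=c$ has row indices $(a,c,a,c)$ of kernel $\pi_{4}^{1}$, hence equals $h(\pi_{4}^{1},\pi_{4}^{1})$. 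This yields $(N-2)h(\pi_{4}^{2},\pi_{4}^{1})+h(\pi_{4}^{1},\pi_{4}^{1})=\tfrac{1}{N(N-1)}$, i.e.\ the second relation. The computations themselves are routine; the only point requiring care is the bookkeeping — correctly identifying the kernel of each specialised monomial and checking that the ``collapsing'' terms vanish, which relies precisely on the inequalities between indices built into the singleton blocks of $\pi_{4}^{2}$ — and this same mechanism (summing one index, reducing to lower-order moments) is the template underlying all the relations collected in this appendix.
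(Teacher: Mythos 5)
Your proposal is correct and follows essentially the same route as the paper: insert a resolution of identity $\sum_g p_{fg}=1$ (resp.\ $\sum_f p_{fd}=1$) after the order-three moment, use the $S_N\times S_N$-invariance to classify the surviving terms by their kernels, and note that the collapsing terms vanish by the orthogonality relations. The only cosmetic differences are that you work with general entries $p_{ab},p_{cd}$ instead of diagonal ones and derive the base value $h(p_{ab}p_{cd}p_{ab})=\tfrac{1}{N(N-1)}$ explicitly, which the paper simply quotes via traciality.
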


\begin{proof}
Fix throughout the proof pairwise different indices $i, j, k \in \{1, \ldots, N\}$. We start with the equality
\begin{equation*}
h(p_{i}p_{j}p_{i}) = \sum_{l=1}^{N}h(p_{i}p_{j}p_{i}p_{kl})
\end{equation*}
for some $k\notin\{i, j\}$. In the sum, the term with $l=i$ vanishes since $i\neq k$ (as $p_i p_{ki}=0$ for $i\neq k$), while all terms with $l\notin\{i, j\}$ are equal by permutation invariance. Since $h(p_{i}p_{j}p_{i}) = h(p_{i}p_{j}) = 1/N(N-1)$, we have
\begin{align*}
\frac{1}{N(N-1)} & = h(p_{i}p_{j}p_{i}p_{kj}) + (N-2)h(\pi_{4}^{2}, \pi_{4}^{2}) \\
& = h(\pi_{4}^{2}, \pi_{4}^{1}) + (N-2)h(\pi_{4}^{2}, \pi_{4}^{2}) \\
\end{align*}
which yields the first relation.

As for the second one, we proceed similarly with the equality
\begin{equation*}
h(p_{i}p_{j}p_{i}) = \sum_{l=1}^{N}h(p_{i}p_{j}p_{i}p_{lj}).
\end{equation*} The term for $l = i$ vanishes, yielding
\begin{equation*}
(N-2)h(\pi_{4}^{2}, \pi_{4}^{1}) + h(\pi_{4}^{1}, \pi_{4}^{1}) = h(p_{i}p_{j}p_{i}).
\end{equation*}
\end{proof}

We will also need a similar formula for moments of order $5$.

\begin{lem}\label{lem:orderfive}
We have
\begin{align*}
h(\pi_{5}^{1}, \pi_{5}^{1}) & = \frac{1}{N-2}h(\pi_{4}^{1}, \pi_{4}^{1}), \\
h(\pi_{5}^{2}, \pi_{5}^{2}) & = \frac{1}{N-2}h(\pi_{4}^{1}, \pi_{4}^{1}), \\
h(\pi_{5}^{3}, \pi_{5}^{3}) & = \frac{1}{N-3}h(\pi_{4}^{2}, \pi_{4}^{2}) - \frac{1}{(N-2)(N-3)}h(\pi_{4}^{2}, \pi_{4}^{1}). \\
\end{align*}
\end{lem}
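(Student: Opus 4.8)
The plan is to mimic the proof of Lemma \ref{lem:orderfour}: each of the three identities will be obtained by inserting a resolution of the identity, either a row sum $1=\sum_m p_{km}$ or a column sum $1=\sum_m p_{mk}$, into a moment of order four, expanding the resulting sum, and then sorting the order-five moments that appear according to the pair of kernel partitions of their row- and column-indices (which is all that the moment depends on). Throughout I will use that $h$ is a trace on $\Pol(S_N^+)$, and that, by the relations $p_{ab}p_{ac}=\delta_{bc}p_{ab}$ and $p_{ab}p_{cb}=\delta_{ac}p_{ab}$, a product of two generators sharing a row (respectively a column) but with distinct other indices vanishes. It is important to note that some of the ``boundary'' terms of the expansions vanish only after using traciality to cyclically rotate the product so that the offending pair of generators becomes adjacent.

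For the first identity, fix distinct $i,j,k$ and insert $1=\sum_l p_{kl}$ on the right of $h(p_ip_jp_ip_j)=h(\pi_4^1,\pi_4^1)$, obtaining $h(\pi_4^1,\pi_4^1)=\sum_{l=1}^N h(p_ip_jp_ip_j\,p_{kl})$. The term $l=j$ vanishes because $p_{jj}p_{kj}=0$, and the term $l=i$ vanishes after moving $p_{ki}$ to the front so that it meets $p_{ii}$; the remaining $N-2$ terms ($l=k$ and $l\notin\{i,j,k\}$) each carry row-kernel and column-kernel $\pi_5^1$, hence equal $h(\pi_5^1,\pi_5^1)$, giving the first formula. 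For the second identity, with the same $i,j,k$, insert $1=\sum_m p_{km}$ between the third and fourth factors of $h(p_ip_jp_ip_j)$; the terms $m=i$ and $m=j$ vanish (the inserted factor then shares a column with a neighbour), and the remaining $N-2$ terms all contribute $h(\pi_5^2,\pi_5^2)$.

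The third identity is the delicate one, since the natural manipulation introduces a genuinely new mixed moment $h(\pi_5^3,\pi_5^2)$ that must be eliminated afterwards. Starting from $h(p_ip_jp_ip_k)=h(\pi_4^2,\pi_4^2)$ with $i,j,k$ distinct, inserting $1=\sum_m p_{lm}$ on the right with $l\notin\{i,j,k\}$ fixed kills the terms $m=i$ and $m=k$ and yields
\[
h(\pi_4^2,\pi_4^2)=h(\pi_5^3,\pi_5^2)+(N-3)\,h(\pi_5^3,\pi_5^3),
\]
the first summand coming from $m=j$. Inserting instead $1=\sum_m p_{mj}$ on the right of $h(p_ip_jp_ip_k)$ gives, after the analogous cancellations,
\[
h(\pi_4^2,\pi_4^2)=h(\pi_5^2,\pi_5^2)+(N-3)\,h(\pi_5^3,\pi_5^2).
\]
Finally I would combine these two relations with the second identity above together with the two relations of Lemma \ref{lem:orderfour}, which let one rewrite $h(\pi_5^2,\pi_5^2)=\tfrac1{N-2}h(\pi_4^1,\pi_4^1)$ as $h(\pi_4^2,\pi_4^2)-\tfrac{N-3}{N-2}h(\pi_4^2,\pi_4^1)$; the resulting linear system has a unique solution for $h(\pi_5^3,\pi_5^3)$, which is exactly the stated formula.

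The main obstacle is purely bookkeeping in this last step: one must correctly identify the kernel pair carried by every term of each expanded sum — in particular recognising that the $m=j$ term of both expansions is the same auxiliary moment $h(\pi_5^3,\pi_5^2)$ — and verify all the vanishings, some of which are invisible until the product is cyclically rotated. Once these are pinned down, the derivation of the three formulas is routine linear algebra using Lemma \ref{lem:orderfour}.
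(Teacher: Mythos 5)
Your argument is correct, and the first relation is obtained exactly as in the paper (insert $1=\sum_l p_{kl}$ into $h(p_ip_jp_ip_j)$ and sort by kernels). The two places where you diverge are minor but real: for the second relation the paper simply notes by traciality that $h(\pi_5^1,\pi_5^1)=h(p_ip_jp_ip_jp_k)=h(p_jp_ip_jp_kp_i)=h(\pi_5^2,\pi_5^2)$, whereas you re-expand with an inserted $p_{km}$ -- both work, the paper's observation is just shorter. For the third relation the paper eliminates the mixed moment $h(\pi_5^3,\pi_5^2)$ by expanding the mixed order-four moment $h(p_ip_jp_ip_{lj})=\sum_n h(p_ip_jp_ip_{kn}p_{lj})$, which gives $(N-2)h(\pi_5^3,\pi_5^2)=h(\pi_4^2,\pi_4^1)$ directly and keeps the proof independent of Lemma \ref{lem:orderfour}; you instead derive the second expansion $h(\pi_4^2,\pi_4^2)=h(\pi_5^2,\pi_5^2)+(N-3)h(\pi_5^3,\pi_5^2)$ of the same order-four moment and eliminate using your second identity together with Lemma \ref{lem:orderfour}. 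Your elimination is algebraically sound (it reproduces $h(\pi_5^3,\pi_5^2)=\tfrac{1}{N-2}h(\pi_4^2,\pi_4^1)$ and hence the stated formula), at the cost of importing Lemma \ref{lem:orderfour}, which the paper's proof of this lemma does not need. One small slip in your closing commentary: in the second expansion the $m=j$ term is $h(\pi_5^2,\pi_5^2)$, not $h(\pi_5^3,\pi_5^2)$ (the latter comes from the $N-3$ generic values of $m$); your displayed equation is nevertheless the correct one, so nothing in the derivation is affected.
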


\begin{proof}
	Fix throughout the proof pairwise different indices $i, j, k \in \{1, \ldots, N\}$.
First observe that by traciality,
\begin{align*}
h(\pi_{5}^{1}, \pi_{5}^{1}) & = h(p_{i}p_{j}p_{i}p_{j}p_{k}) = h(p_{j}p_{i}p_{j}p_{k}p_{i}) = h(\pi_{5}^{2}, \pi_{5}^{2}).
\end{align*}
Therefore, the first two moments in the statement are equal. Now
\begin{align*}
h(\pi_{4}^{1}, \pi_{4}^{1}) & = h(p_{i}p_{j}p_{i}p_{j}) = \sum_{l=1}^{N}h(p_{i}p_{j}p_{i}p_{j}p_{kl}).
\end{align*}
In the sum above, the terms for $l = i$ and $l = j$ vanish and the $(N-2)$ other ones equal $h(\pi_{5}^{1}, \pi_{5}^{1})$, concluding the proof.

As for the last equality, we start with
\begin{equation*}
h(p_{i}p_{j}p_{i}p_{k}) = \sum_{m=1}^{N}h(p_{i}p_{j}p_{i}p_{k}p_{lm})
\end{equation*}
for some $l \in \{1, \ldots,N\}$, $l\notin\{i, j, k\}$. The terms for $m = i$ and $m = k$ vanish, so that we get
\begin{equation*}
(N-3)h(\pi_{5}^{3}, \pi_{5}^{3}) + h(\pi_{5}^{3}, \pi_{5}^{2}) = h(\pi_{4}^{2}, \pi_{4}^{2}).
\end{equation*}
Considering again some $l\notin\{i, j, k\}$ and
\begin{equation*}
h(p_{i}p_{j}p_{i}p_{lj}) = \sum_{n=1}^{N}h(p_{i}p_{j}p_{i}p_{kn}p_{lj}),
\end{equation*}
we see that the terms for $n = i$ and $n = j$ vanish and all the other ones are equal. Thus
\begin{equation*}
(N-2)h(\pi_{5}^{3}, \pi_{5}^{2}) = h(\pi_{4}^{2}, \pi_{4}^{1})
\end{equation*}
and the result follows.
\end{proof}

We can now use all this to check our condition on the coefficient given by \eqref{b3}. For the sake of clarity, we will first restate the condition we need in terms of moments involving only $\pi_{4}^{1}$.

\begin{prop}\label{prop:equivalentnonvanishing}
For $i, j \in \{1, \ldots, N\}$, $i\neq j$, we have
\begin{equation*}
h(p_{i}p_{j}p_{i}\chi_{2}) = \frac{1}{N(N-1)}
\end{equation*}
if and only if
\begin{equation*}
h(\pi_{4}^{1}, \pi_{4}^{1}) = \frac{1}{2(N-1)^{2}}.
\end{equation*}
\end{prop}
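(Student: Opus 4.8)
The plan is to express both quantities in terms of the ``pair-partition'' moments already tabulated in Lemmas \ref{lem:orderfour} and \ref{lem:orderfive}, and then verify the claimed equivalence by pure algebra. First I would expand $h(p_ip_jp_i\chi_2)$ using $\chi_2 = \chi_1^2 - \chi_1 - 1$ and the combinatorial identity $\chi_1 = \sum_k p_k - 1$, so that $\chi_2$ becomes an explicit sum of the form $\sum_{k\neq l} p_k p_l - \sum_k p_k + 1$ (after absorbing the diagonal terms $p_k^2 = p_k$ and using the relation $\sum_k p_k=\chi$); in particular $h(p_ip_jp_i\chi_2)$ decomposes as a linear combination of $h(p_ip_jp_ip_kp_l)$ over indices $k,l$, which by permutation invariance reduces to a handful of the symbols $h(\pi_5^a,\pi_5^b)$, $h(\pi_4^a,\pi_4^b)$ and the already-known value $h(p_ip_j)=1/N(N-1)$. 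The careful bookkeeping here is the analogue of the $\chi$-manipulations done in Lemma \ref{lem:computationa3}.

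Once $h(p_ip_jp_i\chi_2)$ is written as such a linear combination, I would substitute the relations of Lemma \ref{lem:orderfour} and Lemma \ref{lem:orderfive}, which express every five-point moment and every four-point moment appearing in terms of the single quantity $h(\pi_4^1,\pi_4^1)$ together with elementary rational functions of $N$. After this substitution $h(p_ip_jp_i\chi_2)$ becomes an affine function of $h(\pi_4^1,\pi_4^1)$, say $h(p_ip_jp_i\chi_2) = A(N) + B(N)\, h(\pi_4^1,\pi_4^1)$ for explicit rational $A(N), B(N)$ with $B(N)\neq 0$ for $N\geq 4$. Solving $A(N) + B(N)\,x = 1/N(N-1)$ for $x$ should yield precisely $x = 1/2(N-1)^2$, which is the stated equivalence; conversely plugging $x = 1/2(N-1)^2$ back in gives $h(p_ip_jp_i\chi_2)=1/N(N-1)$. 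Since the relation between the two quantities is affine with nonzero slope, the two equalities are genuinely equivalent, not merely implications.

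The main obstacle I expect is purely computational: correctly identifying which $\ker$-partition each term $h(p_ip_jp_ip_kp_l)$ falls into as $k,l$ range over all values (including the degenerate cases $k$ or $l\in\{i,j\}$, where some factors collapse via $p_i^2=p_i$ and others vanish via $p_ip_k=0$ when they share a row index --- though here the indices are all diagonal, so only the ``equal index'' collapses occur), counting multiplicities of each partition type correctly, and then not making an arithmetic slip when combining the rational expressions from Lemmas \ref{lem:orderfour} and \ref{lem:orderfive}. A secondary subtlety is that $\chi_2$ as a polynomial in $\chi_1$ must be handled with the correct normalisation ($d_2 = N^2-3N+1$, and $\chi_2$ is the degree-two monic polynomial dictated by the fusion rules $u^1\otimes u^n = u^{n-1}\oplus u^n\oplus u^{n+1}$, i.e.\ $\chi_2 = \chi_1^2 - \chi_1 - 1$), so I would double-check this against the known value $\varepsilon(\chi_2) = d_2$ before proceeding. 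Modulo this bookkeeping, the proof is a direct back-substitution and comparison.
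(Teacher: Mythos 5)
Your plan is essentially the paper's own proof: the paper likewise writes $\chi_{2} = \chi^{2} - 3\chi + 1$ (the same polynomial as your $\chi_{1}^{2}-\chi_{1}-1$ after $\chi_{1}=\chi-1$), expands $h(p_{i}p_{j}p_{i}\chi_{2})$ into partition-labelled five-point moments $h(\pi_{5}^{a},\pi_{5}^{a})$, substitutes Lemmas \ref{lem:orderfour} and \ref{lem:orderfive} to obtain exactly the affine relation you predict, namely $h(p_{i}p_{j}p_{i}\chi_{2}) = \tfrac{2(N-1)}{N-2}h(\pi_{4}^{1},\pi_{4}^{1}) - \tfrac{2}{N(N-1)(N-2)}$, and then solves for $h(\pi_{4}^{1},\pi_{4}^{1})$, the nonzero slope giving genuine equivalence. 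One small correction inside the bookkeeping you already flag: after absorbing diagonal terms the expansion is $\chi_{2} = \sum_{k\neq l}p_{k}p_{l} - 2\sum_{k}p_{k} + 1$ (coefficient $2$, not $1$, on $\sum_{k}p_{k}$).
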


\begin{proof}
The starting point is the decomposition
\begin{equation*}
\chi_{2} = \chi^{2} - 3\chi + 1
\end{equation*}
which enables us to split the computation into three terms. The one involving the constant $1$ is straightforward to compute. As for the one involving $\chi$, we have
\begin{align*}
h(p_{i}p_{j}p_{i}\chi) & = h(p_{i}p_{j}p_{i}) + h(p_{i}p_{j}p_{i}p_{j}) + \sum_{k\notin\{i, j\}}h(p_{i}p_{j}p_{i}p_{k}). \\
\end{align*}
We are now going to compute the sum
\begin{equation*}
h(p_{i}p_{j}p_{i}\chi^{2}) = \sum_{k, l = 1}^{N}h(p_{i}p_{j}p_{i}p_{k}p_{l}).
\end{equation*}
Let us first consider the case where $k = l$. There are two special values, namely $k = l = i$ and $k = l = j$, and then $N-2$ other terms which are all equal. If instead $k\neq l$, then we have several possible cases again:
\begin{itemize}
\item $l = i$ so that we get $h(p_{i}p_{j}p_{i}p_{k})$;
\item $k = i$ so that we get $h(p_{i}p_{j}p_{i}p_{l})$;
\item $l = j$ and $k\neq i$, or $k = j$ and $l \neq i$. 
\end{itemize}
The remaining terms correspond to the case where $k\neq l$ and none of them belongs to $\{i, j\}$. In other words,
\begin{align*}
h(p_{i}p_{j}p_{i}\chi^{2}) & = h(p_{i}p_{j}p_{i}) + h(p_{i}p_{j}p_{i}p_{j}) + \sum_{k\notin\{i, j\}}h(p_{i}p_{j}p_{i}p_{k}) \\
& \;\;\;\; + \sum_{k\neq i}h(p_{i}p_{j}p_{i}p_{k}) + \sum_{l\neq i}h(p_{i}p_{j}p_{i}p_{l}) + \sum_{k\notin \{i, j\}}h(p_{i}p_{j}p_{i}p_{k}p_{j}) + \sum_{l\notin\{i, j\}}h(p_{i}p_{j}p_{i}p_{j}p_{l})\\
& \;\;\;\; + \sum_{k \neq l\notin \{i, j\}}h(p_{i}p_{j}p_{i}p_{k}p_{l}) \\
& = h(p_{i}p_{j}p_{i}) + 3h(p_{i}p_{j}p_{i}p_{j}) + 3\sum_{k\notin\{i, j\}}h(p_{i}p_{j}p_{i}p_{k}) \\
& \;\;\; \;+ \sum_{k\notin \{i, j\}}h(p_{i}p_{j}p_{i}p_{k}p_{j}) + \sum_{l\notin\{i, j\}}h(p_{i}p_{j}p_{i}p_{j}p_{l}) + \sum_{k \neq l\notin \{i, j\}}h(p_{i}p_{j}p_{i}p_{k}p_{l}). \\
\end{align*}
Gathering everything we eventually get
\begin{align*}
h(p_{i}p_{j}p_{i}\chi_{2}) & = h(p_{i}p_{j}p_{i}) + 3h(p_{i}p_{j}p_{i}p_{j}) + 3\sum_{k\notin\{i, j\}}h(p_{i}p_{j}p_{i}p_{k}) + \sum_{k \neq l\notin \{i, j\}}h(p_{i}p_{j}p_{i}p_{k}p_{l}) \\
& \;\;\;\; + \sum_{k\notin \{i, j\}}h(p_{i}p_{j}p_{i}p_{k}p_{j}) + \sum_{l\notin\{i, j\}}h(p_{i}p_{j}p_{i}p_{j}p_{l}) \\
& \;\;\;\; - 3\left(h(p_{i}p_{j}p_{i}) + h(p_{i}p_{j}p_{i}p_{j}) + \sum_{k\notin\{i, j\}}h(p_{i}p_{j}p_{i}p_{k})\right) \\
& \;\;\;\; + h(p_{i}p_{j}p_{i}) \\
& = \sum_{k \neq l\notin \{i, j\}}h(p_{i}p_{j}p_{i}p_{k}p_{l}) + \sum_{k\notin \{i, j\}}h(p_{i}p_{j}p_{i}p_{k}p_{j}) + \sum_{l\notin\{i, j\}}h(p_{i}p_{j}p_{i}p_{j}p_{l}) - h(p_{i}p_{j}p_{i})\\
& = (N-2)(N-3)h(\pi_{5}^{3}, \pi_{5}^{3}) + (N-2)h(\pi_{5}^{1}, \pi_{5}^{1}) + (N-2)h(\pi_{5}^{2}, \pi_{5}^{2}) - h(p_{i}p_{j}p_{i}) \\
& = (N-2)h(\pi_{4}^{2}, \pi_{4}^{2}) - h(\pi_{4}^{2}, \pi_{4}^{1}) + 2(N-2)\frac{1}{N-2}h(\pi_{4}^{1}, \pi_{4}^{1}) - \frac{1}{N(N-1)} \\
& = \frac{1}{N(N-1)} - 2h(\pi_{4}^{2}, \pi_{4}^{1}) + 2(N-2)\frac{1}{N-2}h(\pi_{4}^{1}, \pi_{4}^{1}) - \frac{1}{N(N-1)} \\
& = 2\left(h(\pi_{4}^{1}, \pi_{4}^{1}) - h(\pi_{4}^{2}, \pi_{4}^{1})\right) \\
& = 2\frac{N-1}{N-2}h(\pi_{4}^{1}, \pi_{4}^{1}) - \frac{2}{N(N-1)(N-2)}.
\end{align*}
If this was equal to $1/N(N-1)$, then we would have
\begin{align*}
h(\pi_{4}^{1}, \pi_{4}^{1}) & = \frac{N-2}{2(N-1)}\left(\frac{1}{N(N-1)} + \frac{2}{N(N-1)(N-2)}\right) \\
& = \frac{1}{2(N-1)^{2}}.
\end{align*}
\end{proof}

The previous criterion will be the key to the main result of this appendix. Indeed, we claim that $h(p_{i}p_{j}\chi_{2}) = 1/N(N-1)$. This follows from the following calculation (recall that $\chi = \chi_{1} + \chi_{0})$:
\begin{align*}
h(p_{i}p_{j}\chi_{2}) & = \frac{1}{N-1}\left(h(p_{i}\chi\chi_{2}) - h(p_{i}\chi_{2})\right) \\
& = \frac{1}{N(N-1)}h(\chi^{2}\chi_{2}) - \frac{1}{N(N-1)}h(\chi\chi_{2}) \\
& = \frac{1}{N(N-1)}.
\end{align*}

Gathering things together now leads to what we need.

\begin{thm}\label{thm:computationb3}
Let $N\geqslant 4$, and let $i,j\in \{1, \ldots,N\}$, $i \neq j$. Then 	$h(p_{i}p_{j}\chi_{2}) \neq h(p_{i}p_{j}p_{i}\chi_{2})$.
\end{thm}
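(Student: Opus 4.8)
The plan is to reduce the statement, using the facts already at hand, to a single closed‑form evaluation of the Haar state, and then to check that the resulting rational function of $N$ does not coincide with the ``critical'' value $\tfrac1{2(N-1)^2}$.

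We have already seen that $h(p_ip_j\chi_2)=\tfrac1{N(N-1)}$, while Proposition~\ref{prop:equivalentnonvanishing} says that $h(p_ip_jp_i\chi_2)=\tfrac1{N(N-1)}$ holds if and only if $h(\pi_4^1,\pi_4^1)=\tfrac1{2(N-1)^2}$, where $h(\pi_4^1,\pi_4^1)=h(p_ip_jp_ip_j)$ for $i\neq j$. Thus it suffices to show $h(p_ip_jp_ip_j)\neq\tfrac1{2(N-1)^2}$, and I would do this by computing $h(p_ip_jp_ip_j)$ exactly. Write, for pairwise distinct indices, $T=h(p_ip_jp_k)$, $S=h(p_ip_jp_kp_l)$, $x=h(\pi_4^1,\pi_4^1)=h(p_ip_jp_ip_j)$, $y=h(\pi_4^2,\pi_4^2)=h(p_ip_jp_ip_l)$, and also the auxiliary moment $R=h(p_ip_jp_kp_{lj})$ (rows $(i,j,k,l)$ all distinct, columns $(i,j,k,j)$). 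The plan is to produce a linear system relating $T,S,x,y,R$ and to solve it:
\begin{itemize}
\item feeding $\sum_m p_{km}=1$ into $h(p_ip_jp_k)$ (as in Lemma~\ref{lem:orderfive}) gives $T=\tfrac1{N(N-1)(N-2)}$;
\item Lemma~\ref{lem:orderfour} expresses $y$ as an affine function of $x$;
\item feeding $\sum_m p_{lm}=1$, respectively $\sum_m p_{mj}=1$, into $h(p_ip_jp_k)$ and tracking which terms vanish because two neighbouring factors (or the first and last factor, after a cyclic rotation) share a row or a column gives $T=(N-3)S+R$ and $(N-3)R=T-y$;
\item finally, expanding $h(p_ip_j\chi^2)=\sum_{k,l}h(p_ip_jp_kp_l)$ according to the coincidence pattern of $k,l$ with $i,j$ expresses $h(p_ip_j\chi^2)$ as a linear combination of $T,S,x,y$; since $\chi^2=2+3\chi_1+\chi_2$, the fusion rules together with $h(p_ip_j\chi_1)=\tfrac2{N(N-1)}$ and $h(p_ip_j\chi_2)=\tfrac1{N(N-1)}$ give $h(p_ip_j\chi^2)=\tfrac9{N(N-1)}$, hence one more relation.
\end{itemize}
Solving this system yields
\[
h(p_ip_jp_ip_j)=\frac{2N-5}{N(N-1)(N^2-3N+1)},
\]
which can be cross‑checked against $h(\chi^4)=14$.

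The proof then concludes by elementary algebra: $\tfrac{2N-5}{N(N-1)(N^2-3N+1)}=\tfrac1{2(N-1)^2}$ is equivalent to $2(N-1)(2N-5)=N(N^2-3N+1)$, i.e.\ to $N^3-7N^2+15N-10=0$, i.e.\ to $(N-2)(N^2-5N+5)=0$; the roots are $N=2$ and $N=\tfrac{5\pm\sqrt5}{2}$, none of which is an integer $\geqslant 3$. Hence for $N\geqslant 4$ we get $h(p_ip_jp_ip_j)\neq\tfrac1{2(N-1)^2}$, and therefore $h(p_ip_j\chi_2)\neq h(p_ip_jp_i\chi_2)$.

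The step I expect to be the main obstacle is assembling \emph{enough} independent linear relations among the degree‑$4$ moments: besides Lemma~\ref{lem:orderfour} and the degree‑$4$ character identity one genuinely needs the auxiliary moment $R$ together with its two defining relations, and one must be careful in the partition‑of‑unity computations to discard exactly the right terms (those in which two adjacent factors, or the first and last factor after a cyclic rotation, share a row or a column index). A related point is that $\pi_4^1=\{\{1,3\},\{2,4\}\}$ is \emph{crossing}, so it is not directly accessible through the non‑crossing Weingarten calculus; the argument above deliberately avoids computing it that way.
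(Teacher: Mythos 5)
Your proposal is correct, but it takes a genuinely different route from the paper. Your linear system does close up: with $T=\tfrac1{N(N-1)(N-2)}$, the two relations of Lemma \ref{lem:orderfour}, the partition-of-unity identities $T=(N-3)S+R$ and $(N-3)R=T-y$, and the coincidence-pattern expansion of $h(p_ip_j\chi^2)=\tfrac9{N(N-1)}$ (which, after stripping the known lower-order terms, reads $x+2(N-2)y+3(N-2)T+(N-2)(N-3)S=\tfrac6{N(N-1)}$ with $x=h(\pi_4^1,\pi_4^1)$, $y=h(\pi_4^2,\pi_4^2)$), one indeed solves uniquely to $h(\pi_4^1,\pi_4^1)=\tfrac{2N-5}{N(N-1)(N^2-3N+1)}$, and your concluding factorisation $(N-2)(N^2-5N+5)$ correctly rules out the critical value $\tfrac1{2(N-1)^2}$ for all integers $N\geqslant 4$. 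The paper argues differently: it applies the invariance $h=(h\otimes h)\circ\Delta$ to the degree-four moment, which together with Lemma \ref{lem:orderfour}, the extra relation $h(\pi_4^1,\pi_4^3)=-\tfrac1{N-3}h(\pi_4^1,\pi_4^2)$ and the symmetry $h(\pi_4^1,\pi)=h(\pi,\pi_4^1)$ yields a quadratic equation for $\widetilde X=N(N-1)h(\pi_4^1,\pi_4^1)$ with roots $1$ and $\tfrac{2N-5}{N^2-3N+1}$, and then checks that neither root equals $\tfrac{N}{2(N-1)}$ -- it never decides which root is the actual value. Your approach buys the exact value of the crossing moment (and incidentally identifies the second root of the paper's quadratic as the true one), at the price of the auxiliary moment $R$ and more bookkeeping of vanishing terms; the paper's approach needs less linear algebra but relies on the coproduct/antipode symmetry and tolerates the root ambiguity. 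One cosmetic slip: in your first bullet the partition of unity should be inserted into $h(p_ip_j)$ (or one can simply quote $h(p_ip_j\chi)=\tfrac3{N(N-1)}$ from Lemma \ref{lem:computationa3}) rather than ``into $h(p_ip_jp_k)$''; the stated value of $T$ is nevertheless correct.
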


\begin{proof}
The proof relies on the invariance of the Haar state with respect to the coproduct, which can be expressed as
\begin{equation*}
h(u_{i_{1}j_{1}}u_{i_{2}j_{2}}u_{i_{3}j_{3}}u_{i_{4}j_{4}}) = \sum_{k_{1}, k_{2}, k_{3}, k_{4} = 1}^{N}h(u_{i_{1}k_{1}}u_{i_{2}k_{2}}u_{i_{3}k_{3}}u_{i_{4}k_{4}})h(u_{k_{1}j_{1}}u_{k_{2}j_{2}}u_{k_{3}j_{3}}u_{k_{4}j_{4}}).
\end{equation*}
Writing it in terms of partitions then yields
\begin{equation*}
h(\pi_{4}^{1}, \pi_{4}^{1}) = \sum_{\pi \in \mathcal{P}(4)}\frac{N!}{(N-b(\pi))!}h(\pi_{4}^{1}, \pi)h(\pi, \pi_{4}^{1})
\end{equation*}
(here $\mathcal{P}(4)$ denotes the set of all partitions of $\{1,2,3,4\}$ and $b(\pi)$ the number of blocks of a partition $\pi$). In the sum above, many terms vanish for trivial reasons. Indeed, if there are two neighbouring points which are connected in $\pi$ but not in $\pi_{4}^{1}$, then $h(\pi_{4}^{1}, \pi) = 0$ because there are two distinct terms in the same column which are multiplied. A quick inspection of the possibilities then shows that the only non-zero terms correspond to the following partitions : $\pi_{4}^{1}$, $\pi_{4}^{2}$, $\pi_{4}^{2\prime}$ and $\pi_{4}^{3}$. 

Moreover, the Haar state is invariant (in a natural sense) under taking adjoints and under the antipode. Composing both operations and remembering that by the Weingarten formula all the coefficients above are real, shows that it is invariant under exchanging the left and right indices. In other words, $h(\pi_{4}^{1}, \pi) = h(\pi, \pi_{4}^{1})$. 
Thus,
\begin{align*}
h(\pi_{4}^{1}, \pi_{4}^{1}) & = N(N-1)h(\pi_{4}^{1}, \pi_{4}^{1})^{2} + 2N(N-1)(N-2)h(\pi_{4}^{1}, \pi_{4}^{2})^{2} \\
& + N(N-1)(N-2)(N-3)h(\pi_{4}^{1}, \pi_{4}^{3})^{2},
\end{align*}
where we have used the fact that by traciality, $h(\pi_{4}^{1}, \pi_{4}^{2}) = h(\pi_{4}^{1}, \pi_{4}^{2\prime})$. We already have an expression of $h(\pi_{4}^{1}, \pi_{4}^{2})$ in terms of $h(\pi_{4}^{1}, \pi_{4}^{1})$ from Lemma \ref{lem:orderfour}, and we will now provide a similar one for the last term. Let  $k, l, m \in \{1, \ldots, N\}$ be all distinct. We then have 
\begin{equation*}
0 = h(p_{ik}p_{jl}p_{im}) = \sum_{n=1}^N h(p_{ik}p_{jl}p_{im}p_{jn}).
\end{equation*}
The sum over $n$ contains two vanishing terms (corresponding to $k$ and $m$), one term for $n = l$ and $N-3$ other terms:
\begin{equation*}
h(p_{ik}p_{jl}p_{im}) = h(\pi_{4}^{1}, \pi_{4}^{2}) + (N-3)h(\pi_{4}^{1}, \pi_{4}^{3}).
\end{equation*}
We conclude that
\begin{align*}
h(\pi_{4}^{1}, \pi_{4}^{3}) & = - \frac{1}{N-3}h(\pi_{4}^{1}, \pi_{4}^{2}) = - \frac{1}{N-3}\left(\frac{1}{N(N-1)(N-2)} - \frac{1}{N-2}h(\pi_{4}^{1}, \pi_{4}^{1})\right).
\end{align*}
Setting $X = h(\pi_{4}^{1}, \pi_{4}^{1})$, we end up with the following quadratic equation:
\begin{align*}
X & = N(N-1)X^{2} + 2N(N-1)(N-2)\left(\frac{1}{N(N-1)(N-2)} - \frac{X}{N-2}\right)^{2} \\
& + N(N-1)(N-2)(N-3)\frac{1}{(N-3)^{2}}\left(\frac{1}{N(N-1)(N-2)} - \frac{X}{N-2}\right)^{2}.
\end{align*}
Setting $\widetilde{X} = N(N-1)X$, the equation becomes
\begin{equation*}
\widetilde{X} = \widetilde{X}^{2} + \frac{2N-5}{(N-2)(N-3)}(1-\widetilde{X})^{2}.
\end{equation*}
Setting $\alpha_{N} = \frac{2N-5}{(N-2)(N-3)}$, the equation becomes
\begin{equation*}
(1+\alpha)\widetilde{X}^{2} - (1+2\alpha)\widetilde{X} + \alpha = 0\,\, ,
\end{equation*}
whose solutions are $(1+2\alpha\pm 1)/(2+2\alpha)$. One of these is the obvious solution $\widetilde{X} = 1$, while the other one is
\begin{equation*}
\widetilde{X} = \frac{2N-5}{N^{2} - 3N + 1}.
\end{equation*}
None of these equals $\frac{N}{2(N-1)}$, and the result therefore follows from Proposition \ref{prop:equivalentnonvanishing}.
\end{proof}

\begin{rmk}
Once a first version of this work was made available, we were informed that the value of $h(\pi_{4}^{1}, \pi_{4}^{1})$ was already computed in \cite[Thm 4.4]{McC} (see \cite{McC2} for a misprint in the displayed formula). This provides an alternative proof of Theorem \ref{thm:computationb3}.
\end{rmk}

 \noindent {\bf Acknowledgments. }
 A.S.\ was  partially supported by the National Science Center (NCN) grant no. 2020/39/I/ST1/01566, and would also like to gratefully acknowledge the visit to Harbin in autumn of 2023 when the work on this paper was initiated. S.W.\ was  partially supported by the NSF of China (No.12031004, No.12301161)
 and the Fundamental Research Funds for the Central Universities. We thank Jacek Krajczok for useful comments. Finally we thank the referee for a very thorough reading of our paper and numerous suggestions significantly improving the presentation.

\end{document}